\renewcommand\thetable{\thesection.\@arabic\c@table}
\newtheorem{theorem}{Theorem}[section]
\newtheorem{lemma}[theorem]{Lemma}
\newtheorem{proposition}[theorem]{Proposition}
\newtheorem{corollary}[theorem]{Corollary}
\newtheorem{remark}[theorem]{Remark}
\newcommand{\mc}[1]{{\mathcal #1}}
\newcommand{\mf}[1]{{\mathfrak #1}}
\newcommand{\mb}[1]{{\mathbf #1}}
\newcommand{\bb}[1]{{\mathbb #1}}
\newcommand{\bs}[1]{{\boldsymbol #1}}
\newcommand{\ms}[1]{{\mathscr #1}}
\renewcommand{\Cap}{{\rm cap}}
\begin{document}

\title[Tunneling of the Kawasaki dynamics at low
temperatures]{Tunneling of the Kawasaki dynamics at low temperatures
  in two dimensions}

\author{J. Beltr\'an, C. Landim}

\address{\noindent IMCA, Calle los Bi\'ologos 245, Urb. San C\'esar
  Primera Etapa, Lima 12, Per\'u and PUCP, Av. Universitaria cdra. 18,
  San Miguel, Ap. 1761, Lima 100, Per\'u. 
\newline e-mail: \rm
  \texttt{johel.beltran@pucp.edu.pe} }

\address{\noindent IMPA, Estrada Dona Castorina 110, CEP 22460 Rio de
  Janeiro, Brasil and CNRS UMR 6085, Universit\'e de Rouen, Avenue de
  l'Universit\'e, BP.12, Technop\^ole du Madril\-let, F76801
  Saint-\'Etienne-du-Rouvray, France.  \newline e-mail: \rm
  \texttt{landim@impa.br} }

\keywords{Metastability, tunneling, lattice gases, Kawasaki dynamics,
  capacities}

\begin{abstract}
  Consider a lattice gas evolving according to the conservative
  Ka\-wa\-sa\-ki dynamics at inverse temperature $\beta$ on a two
  dimensional torus $\Lambda_L=\{0, \dots, L-1\}^2$ . We prove the
  tunneling behavior of the process among the states of minimal
  energy. More precisely, assume that there are $n^2$ particles, $n<
  L/2$, and that the initial state is the configuration in which all
  sites of the square $\{0, \dots, n-1\}^2$ are occupied. We
  show that in the time scale $e^{2\beta}$ the process evolves as a
  Markov process on $\Lambda_L$ which jumps from any site $\mb x$ to
  any other site $\mb y\not =\mb x$ at a strictly positive rate which
  can be expressed in terms of the hitting probabilities of simple
  Markovian dynamics.
\end{abstract}

\maketitle

\section{Introduction}
\label{sec0}

We introduced in \cite{bl2, bl7} a general method to describe the
asymptotic evolution of one-parameter families of continuous-time
Markov chains. This method has been succesfully applied in two
situations: For zero-range dynamics on a finite set which exhibit
condensation \cite{bl3, l1}, and for random walks evolving among
random traps \cite{jlt1, jlt2}. In the first model the chain admits a
finite number of ground sets, while in the second one there is a
countable number of ground states. We start in this paper the
investigation of a third case, where the limit dynamics is a
continuous process.

This article has two purposes. On the one hand, to derive some
estimates needed in the proof of the convergence, in the
zero-temperature limit, of the two-dimensional Kawasaki dynamics for
the Ising model in a large cube to a Brownian motion, presented in
\cite{gl5}. On the other hand, to illustrate the interest of the
method introduced in \cite{bl2, bl7} by applying it in a simple
context. A first step was done in this direction in \cite{bl4}, where
we derived the asymptotic behavior of continuous-time Markov chains
evolving on a \emph{fixed} and \emph{finite} state space imposing only
one simple condition on the jump rates. A second step is performed
here, applying the result obtained in \cite{bl4} to the Kawasaki
dynamics for the Ising model on a fixed two-dimensional square with
periodic boundary condition.

To present the main result of \cite{bl4}, consider a one-parameter
family of irreducible Markov chains $\eta_N(t)$ on a fixed and finite
state space $E$, reversible with respect to a probability measure
$\mu_N$. For example, the Glauber or the Kawaski dynamics for the
Ising model on a finite space. For $\eta\in E$, denote by $\mb P^N_\eta$ the
distribution of the process $\eta_N (t)$ starting from $\eta$. Expectation
with respect to $\mb P^N_\eta$ is represented by $\mb E^N_\eta$. 

Denote by $R_N(\eta,\xi)$ the jump rates of the chain and assume that for
all $\eta$, $\eta'$, $\xi$, $\xi'\in E$,
\begin{equation}
\label{c1}
\lim_{N\to\infty} \frac{R_N(\eta,\eta')}{R_N(\xi,\xi')} \;\in\; [0,\infty]
\end{equation}
in the sense that the limit exists with $+\infty$ as a possible value.
Note that conditions (2.1) and (2.2) in \cite{bl4} follow from
\eqref{c1}. Moreover, since for the Glauber or for the Kawasaki
dynamics the jump rates are either $1$ or $e^{-k\beta}$ for some $1\le
k\le 4$, assumption \eqref{c1} is fulfilled.

Under the elementary assumption \eqref{c1} we completely described in
\cite{bl4} the asymptotic evolution of the Markov chain
$\eta_N(t)$. More precisely, we proved the existence of a rooted tree
whose vertices are subsets of the state space. The tree fulfills the
following properties: (a) The root of the tree is the state space; (b)
the subsets of each generation form a partition of the state space;
and (c) the sucessors of a vertex are subsets of this vertex. To each
generation corresponds a tunneling behavior. Let $M+1$, $M\ge 1$, be
the number of generations of the tree, let $\kappa_m +1$ be the number of
descendents at generation $m+1$, $1\le m\le M$, and let $\mc E^m_1,
\dots, \mc E^m_{\kappa_m}, \Delta_m$ be the vertices of the generation
$m+1$.  We proved the existence of time scales $\theta^N_1 \gg \cdots
\gg \theta^N_M$ such that for each $1\le m\le M$:
\begin{enumerate}
\item For every $1\le i\le \kappa_m$ and every state $\eta$ in $\mc E^{m}_i$,
\begin{equation*}
\lim_{N\to \infty} \, \max_{\xi\in \mc E^{m}_i} \mb P^N_\xi \big[
H_{\breve{\mc E}^m_i} < H_{\eta} \big] \;=\; 0\;, 
\end{equation*}
where $\breve{\mc E}^m_i = \cup_{j\not =i} \mc E^m_j$ and where $H_A$
stands for the hitting time of a set $A\subset E$. This means that
starting from a set $\mc E^m_i$ the process visits all the points
of $\mc E^m_i$ before reaching another set $\mc E^m_j$.

\item Let $\mc E^{m} = \cup_i \mc E^m_i$ and let $\Psi_m : \mc E^{m}
  \to \{1, \dots, \kappa_m\}$ be the index function given by
\begin{equation*}
\Psi_m (\eta) \;=\; \sum_{i=1}^{\kappa_m} i\, \mb 1\{ \eta\in \mc E_i^{m} \}\; .
\end{equation*}
Denote by $\{\eta^{m}_N(t) : t\ge 0\}$ the trace of the process $\{\eta_N(t)
: t\ge 0\}$ on $\mc E^{m}$. For every $1\le i\le \kappa_m$, $\eta\in \mc
E_i^{m}$, under the measure $\mb P^N_\eta$, the (non-Markovian) index 
process $X^m_N(t) = \Psi_m (\eta^m_N(t \theta^N_m))$ converges to a
Markov process $X^m(t)$ on $\{1, \dots, \kappa_m\}$.

\item Starting from $\eta\in \mc E^m$, in the time scale $\theta^N_m$ the
  time spent outside $\mc E^m$ is negligible: For every $t>0$,
\begin{equation*}
\lim_{N\to \infty} \max_{\eta\in E} \, \mb E^N_\eta \Big[
\int_0^t \mb 1\{ \eta_N(s\theta^N_m) \in \Delta_m \} \, ds  \Big] \;=\; 0\;. 
\end{equation*}
\end{enumerate}

Therefore, in the time scale $\theta^N_m$ the process $\eta_N(t)$
behaves as a Markov process on a state space whose $\kappa_m$ points
are the sets $\mc E^m_1, \dots, \mc E^m_{\kappa_m}$ and which jumps
from $\mc E^m_i$ to $\mc E^m_j$ at a rate given by the jump rates of
the Markov process $X^m(t)$.

\medskip

We apply this result to investigate the zero-temperature limit of the
Kawasaki dynamics for the Ising model on a two-dimensional square with
periodic boundary condition. Here, for a fixed square and a fixed
number of particles, we derive the asymptotic behavior of the dynamics
among the ground states, configurations whose particles form
squares. In \cite{gl5}, we show that the evolution of these square
configurations converges to a Brownian motion when the lenght of the
square and the number of particles increase with the inverse of the
temperature.

\medskip 

The problem of describing the asymptotic behavior of a one-parameter
family of Markov chains evolving on a fixed and finite state space has
been considered before. Olivieri and Scoppola \cite{s1, os1} applied
the ideas introduced in the pathwise approach to
metastability \cite{cgov} to this context. They supposed that the jump
probabilities $P(x,y)$ of a discrete-time chain are given by
\begin{equation}
\label{c2}
P(x,y) = q(x,y) \, e^{-\beta [H(y)-H(x)]_+}\;,
\end{equation}
where $[a]_+$ represents the positive part of $a$, $q(x,y)$ a
symmetric function and $H$ an Hamiltonian. A subset $A$ of the state
space $E$ is called a cycle if $\max_{x\in A} H(x) <
\min_{y\in\partial_+ A} H(y)$, where $\partial_+ A$ stands for the
outer boundary of $A$: $\partial_+ A = \{y\not\in A : \exists\, x\in A
\,,\, P(x,y)>0\}$.  Under condition \eqref{c2}, Olivieri and Scoppola
proved that the exit time of a cycle, appropriately renormalized,
converges to an exponential random variable, and they obtained
estimates, with exponential errors, for the expectation of the exit
time. They were also able to describe the exit path from a
cycle. These results were generalized by Olivieri and Scoppola
\cite{os2} to the non reversible case, and by Manzo et
al. \cite{mnos1}, who extended the results proved in \cite{os1} for
the exit time of a cycle to the hitting time of the absolute minima of
the Hamiltonian.

Therefore, in the context of a fixed and finite state space, the
approach proposed in \cite{bl2, bl7} requires weaker assumptions on
the jump rates than the pathwise approach, it provides better
estimates on the exit times of the wells, and it characterizes the
transition probabilities which describe the way the process jumps from
one well to another. On the other hand, and in contrast with the
pathwise approach to metastability, it does not attempt to
characterize the exit paths from a well.

The potential theoretic approach to metastability \cite{begk1, begk2}
has also been tested \cite{bm1} in the framework of a fixed and finite
state space Markov chain.  Bovier and Manzo considered a Hamiltonian
$H$ and an irreducible discrete-time jump probability reversible with
respect to the Gibbs measure associated to the Hamiltonian $H$ at
inverse temperature $\beta$. Let $\mc M$ be a subset of the local
minima of the Hamiltonian $H$ and let $\mc M_x = \mc M\setminus
\{x\}$, $x\in\mc M$.  Under some assumptions on the Hamiltonian, they
computed the expectation of the hitting time of $\mc M_x$ starting
from $x\in\mc M$ and they proved that this hitting time properly
renormalized converges to an exponential random variable. They also
provided a formula for the probability that starting from $x\in\mc M$
the process returns to the set $\mc M$ at a local minima $y\in \mc M$
in terms of the right eigenvectors of the jump matrix of the
chain. This latter formula, although interesting from the theoretical
point of view, since it establishes a link between the spectral
properties of the generator and the metastable behavior of the
process, is of little pratical use because one is usually unable to
compute the eigenvectors of the generator.

In our approach, we replace the formula of the jump probabilities
written through eigenvectors of the generator by one, \cite[Remark 2.9
and Lemma 6.8]{bl2}, expressed only in terms of the capacities,
capacities which can be estimated using the Dirichlet and the Thomson
variational principles. This latter formula allows us to prove the
convergence of the process (in fact, of the trace process in the usual
Skorohod topology or of the original process in a weaker topology
introduced in \cite{jlt2}) by solving a martingale problem.

Metastability of locally conserved dynamics or of conservative
dynamics superposed with non-conservative ones have been considered
before. Peixoto \cite{p} examined the metastability of the two
dimensional Ising lattice gas at low temperature evolving according to
a superposition of the Glauber dynamics with a stirring dynamics.  Den
Hollander et al. \cite{hos1} and Gaudilli\`ere et al. \cite{gos}
described the critical droplet, the nucleation time and the typical
trajectory followed by the process during the transition from a
metastable set to the stable set in a two dimensional Ising lattice
gas evolving under the Kawasaki dynamics at very low temperature in a
finite square in which particles are created and destroyed at the
boundary. This result has been extended to the anistropic case by
Nardi et al. \cite{nos1} and to three dimensions by den Hollander et
al.  \cite{hnos1}. Using the potential theoretic approach introduced
in \cite{begk1, begk2}, Bovier et al. \cite{bhn1} presented the
detailed geometry of the set of critical droplets and provided sharp
estimates for the expectation of the nucleation time for this model in
dimension two and three.

More recently, Gaudilli\`ere et al. \cite{ghnos} proved that the
dynamics of particles evolving according to the Kawasaki dynamics at
very low temperature and very low density in a two-dimensional torus
whose length increases as the temperature decreases can be
approximated by the evolution of independent particles. These results
were used in \cite{bhs1}, together with the potential theoretic
approach alluded to above, to obtain sharp estimates for the
expectation of the nucleation time for this model.

\section{Notation and Results}
\label{sec1}

We consider a lattice gas on a torus subjected to a Kawasaki dynamics
at inverse temperature $\beta$.  Let $\Lambda_L = \{1, \dots, L\}^2$,
$L\ge 1$, be a square with periodic boundary conditions. Denote by
$\Lambda^*_L$ the set of edges of $\Lambda_L$. This is the set of
unordered pairs $\{x,y\}$ of $\Lambda_L$ such that $\Vert x-y\Vert =1$, where
$\Vert \,\cdot\,\Vert $ stands for the Euclidean distance. The configurations
are denoted by $\eta = \{\eta(x) : x\in \Lambda_L\}$, where $\eta(x)
=1$ if site $x$ is occupied and $\eta(x)=0$ if site $x$ is vacant.
The Hamiltonian $\bb H$, defined on the state space $\Omega_L =
\{0,1\}^{\Lambda_L}$, is given by
\begin{equation*}
-\; \bb H (\eta) \; =\; \sum_{\{x,y\}\in\Lambda^*_L} \eta(x) \eta(y)\; .
\end{equation*}
The Gibbs measure at inverse temperature $\beta$ associated to the
Hamiltonian $\bb H$, denoted by $\mu^\beta$, is given by
\begin{equation*}
\mu^\beta (\eta) \;=\; \frac 1{Z_\beta} e^{-\beta \bb H(\eta)}\;,
\end{equation*}
where $Z_\beta$ is the normalizing partition function.

We consider the continuous-time Markov chain $\{\eta^\beta_t : t\ge
0\}$ on $\Omega_L$ whose generator $L_\beta$ acts on functions
$f:\Omega_L \to \bb R$ as
\begin{equation*}
(L_\beta f)(\eta)\;=\; \sum_{\{x,y\}\in\Lambda^*_L} c_{x,y}(\eta) \, 
[f(\sigma^{x,y}\eta) - f(\eta)]\;,
\end{equation*}
where $\sigma^{x,y}\eta$ is the configuration obtained from $\eta$ by
exchanging the occupation variables $\eta(x)$ and $\eta(y)$:
\begin{equation*}
(\sigma^{x,y}\eta)(z) \;=\; 
\begin{cases}
\eta(z) & \text{if $z\not = x, y$}, \\
\eta(y) & \text{if $z = x$}, \\
\eta(x) & \text{if $z = y$}. \\
\end{cases}
\end{equation*}
The rates $c_{x,y}$ are given by
\begin{equation*}
c_{x,y} (\eta) \;=\; \exp\big\{-\beta \,
[\bb H(\sigma^{x,y}\eta) - \bb H(\eta)]_+ \big\}\;,
\end{equation*}
where $[a]_+$, $a\in \bb R$, stands for the positive part of $a$: $[a]_+
= \max\{a,0\}$. We sometimes represent $\eta^\beta_t$ by
$\eta^\beta(t)$ and we frequently omit the index $\beta$ of
$\eta^\beta_t$.

A simple computation shows that the Markov process $\{\eta_t : t\ge
0\}$ is reversible with respect to the Gibbs measures $\mu^\beta$,
$\beta>0$, and ergodic on each irreducible component formed by the
configurations with a fixed total number of particles. Denote by $|A|$
the cardinality of a finite set $A$. Let $\Omega_{L,K} = \{\eta\in
\Omega_L : \sum_{x\in \Lambda_L} \eta(x) = K\}$, $0\le K\le
|\Lambda_L|$, and denote by $\mu^\beta_K$ the Gibbs measure
$\mu^\beta$ conditioned on $\Omega_{L,K}$:
\begin{equation*}
\mu^\beta_K (\eta) \;=\; \frac 1{Z_{\beta, K}} e^{-\beta \bb
  H(\eta)}\;, \quad \eta \in \Omega_{L,K}\;,
\end{equation*}
where $Z_{\beta, K}$ is the normalizing constant $Z_{\beta, K} =
\sum_{\eta\in \Omega_{L,K}} \exp\{-\beta \bb H(\eta)\}$. We sometimes
denote $\mu^\beta_K$ simply by $\mu_K$.

For each configuration $\eta\in \Omega_{L,K}$, denote by $\mb
P^\beta_{\eta}$ the probability measure on the path space
$D([0,\infty), \Omega_{L,K})$ induced by the Markov process $\{\eta_t
: t\ge 0\}$ starting from $\eta$. Expectation with respect to $\mb
P^\beta_{\eta}$ is represented by $\mb E^\beta_{\eta}$.

Assume from now on that $K=n^2$ for some $4\le n < \sqrt{L}$, and
denote by $Q$ the square $\{0,\dots, n-1\}\times \{0,\dots, n-1\}$.
For $\mb x\in\Lambda_L$, let $Q_{\mb x} = \mb x + Q$ and let
$\eta^{\mb x}$ be the configuration in which all sites of the square
$Q_{\mb x}$ are occupied. Denote by $\Omega^0 = \Omega^0_{L,K}$ the
set of square configurations:
\begin{equation*}
\Omega^0 \;=\; \{\eta^{\mb x}: \mb x\in\Lambda_L \} \;.
\end{equation*}

If $L>2n$ the ground states of the energy $\bb H$ in $\Omega_{L,K}$
are the square configurations:
\begin{equation}
\label{b01}
\bb H_{\rm min}\;:=\;
\min_{\eta \in \Omega_{L,K}} \bb H(\eta) \;=\;  \bb H(\eta^{\mb
  x}) \;=\; - 2n(n-1)\;,
\end{equation}
and $\bb H(\eta) > -2n(n-1)$ for all $\eta\in\Omega_{L,K} \setminus
\Omega^0$.

To prove this claim, fix a configuration $\eta\in \Omega_{L,K}$.
Denote by $\xi$ the configuration obtained from $\eta$ by moving
vertically the particles of the configuration $\eta$ along the columns
of $\Lambda_L$ in the following way.  If there is a particle in the
column $C_k = \{\mb x = (x_1,x_2) \in\Lambda_L: x_1=k\}$, move a
particle in this column to the position $(k,0)$ if this site is
empty. Place all the other particles in the contiguous sites above
$(k,0)$. This means that if there are $j$ particles in the column
$C_k$ for the configuration $\eta$, $\xi(k,i)=1$ if and only if $0\le
i<j$.

This transformation does not decrease the number of vertical edges
connecting particles and maximizes the number of horizontal edges
among the configurations with a fixed number of particles per
column. Therefore, $\bb H(\xi) \le \bb H(\eta)$ and to prove claim
\eqref{b01} it is enough to show that $\bb H(\xi) \ge \bb H(\eta^{\mb
  x})$ and that the equality holds only if $\xi$ is a square
configuration. There are two cases which are examined
separately. Either all columns have at least one particle, or there is
a column with no particle.

In the second case, we may assume that $\xi$ is a configuration of
$\{0,1\}^{\bb Z^2}$ with $n^2$ occupied sites. If the set of occupied
sites is not a connected subgraph of $\bb Z^2$, we decrease the energy
by moving laterally a cluster of particles until it touches another
cluster. We may therefore suppose that the occupied sites form a
connected set.

Associate to each particle of $\xi$ a square of lenght $1$ centered at
the site occupied by the particle. Consider the smallest rectangle in
$\bb R^2$ which contains all squares. By construction, each
row and column of the rectangle contains at least one square.

Denote by $m_1\le m_2$ the lengths of the smallest rectangle which
contains all squares. The area of the rectangle, equal to $m_1m_2$,
must be larger than or equal to the number of particles $n^2$. It
follows from this inequality that $m_1+m_2\ge 2n$, with an equality if
and only if $m_1=m_2=n$. Since each row and each column contains at
least a square, there exist at least $2(m_1+m_2)$ edges connecting
an occupied site to an empty site.

Since there are $n^2$ particles, if all $4$ bonds of each particle
were attached to another particle, the energy would be $-2n^2$. For
the configurations $\xi$, we have seen that $2(m_1+m_2)$ bonds link a
particle to a hole. Hence, the energy of this configuration is at
least $-(2n^2 - m_1-m_2)\ge -2n(n-1)$, with an equality if and only if
$m_1=m_2=n$, i.e., if $\xi'$ is a square configuration. This proves
claim \eqref{b01} if the configuration $\xi$ can be considered as a
configuration of $\{0,1\}^{\bb Z^2}$.

Assume now that all columns have at least one particle. This means
that the configuration $\xi$ has a row of particles forming a ring
around the torus $\Lambda_L$. The argument presented below to estimate
the energy of $\xi$ applies also in the case where a column has no
particles. Let $h$ be the maximal height of the columns: $h=\max\{j\ge
1 : \exists\, k\,,\, \xi(k,j-1)= 1\}$. If all particles at height $h$
have two horizontal neighbors, the configuration $\xi$ forms a strip
around the torus $\Lambda_L$ with $hL=n^2$ particles and its energy is
equal to $-(2h-1)L = -(2n^2-L) > -2n(n-1)$ because $L>2n$ by
assumption.

If there is a particle with maximal height which has one or no
horizontal neighbor, we may move this particle to an empty site at
minimal height without increasing the energy. We repeat this operation
until reaching a configuration formed by a strip of particles
surmounted by a row of particles. Denote by $h$ the height of the
strip and by $0\le k<L$ the number of particles forming the top row so
that $n^2=hL+k$. The energy of this configuration is $-[2(hL+k)
-(L+1)] = -[2n^2 -(L+1)] > -2n(n-1)$ because $L>2n$ by
assumption. This concludes the proof of claim \eqref{b01}. \smallskip

We examine in this article the asymptotic evolution of the Markov
process $\{\eta_t : t\ge 0\}$ among the $|\Lambda_L|$ ground states
$\{\eta^{\mb x}: \mb x\in\Lambda_L \}$ in the zero temperature
limit. Denote by $\{\xi_t : t\ge 0\}$ the trace of the process
$\eta_t$ on the set of ground states $\Omega^0$. We refer to
\cite{bl2} for a precise definition of the trace process. The main
theorem of this article reads as follows.

\begin{theorem}
\label{s11} 
As $\beta\uparrow\infty$, the speeded up process $\xi (e^{2\beta} t)$
converges to a Markov process on $\Omega^0$ which jumps from
$\eta^{\mb x}$ to $\eta^{\mb y}$ at a strictly positive rate $r(\mb x,
\mb y)$. Moreover, in the time scale $e^{2\beta}$ the time spent by
the original process $\eta_t$ outside the set of ground states
$\Omega^0$ is negligible: for every $\mb x\in\Lambda_L$, $t>0$,
\begin{equation}
\label{18}
\lim_{\beta\to\infty} \mb E^\beta_{\eta^{\mb x}} \Big[ \int_0^t \mb
1\{\eta (e^{2\beta} s) \not \in \Omega^0\} \, ds\Big] \;=\; 0\;.
\end{equation}
\end{theorem}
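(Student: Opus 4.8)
The plan is to apply the abstract result of \cite{bl4}, recalled in the introduction, with $N$ replaced by $\beta$ and $E = \Omega_{L,K}$. Condition \eqref{c1} is satisfied because, as noted, the Kawasaki rates $c_{x,y}(\eta)$ take only the values $1, e^{-\beta}, \dots, e^{-4\beta}$. So the machinery of \cite{bl4} produces the tree of partitions and a decreasing sequence of time scales $\theta^\beta_1 \gg \cdots \gg \theta^\beta_M$, each carrying a tunneling behavior. The first task is to identify which generation of that tree corresponds to the ground states $\Omega^0$ and to show that the associated time scale is $e^{2\beta}$. Since $\bb H(\eta^{\mb x}) = -2n(n-1)$ is the strict minimum of $\bb H$ on $\Omega_{L,K}$ by \eqref{b01}, and all other configurations have strictly larger energy, the states $\{\eta^{\mb x}\}$ sit at the bottom of the energy landscape; the relevant time scale is governed by the communication height (the energy barrier) between any two distinct $\eta^{\mb x}$, $\eta^{\mb y}$, measured relative to $\bb H_{\mathrm{min}}$. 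I would argue, via an explicit construction of a transition path, that this barrier equals $2$: one detaches a single corner particle from a square configuration (cost $2\beta$ since a corner particle has exactly $2$ bonds), slides it around the surface of the square and the torus along an essentially flat path (each intermediate move costing at most $2\beta$ relative to the minimum), and re-attaches it to rebuild a translated square. A matching lower bound — that one cannot go from $\eta^{\mb x}$ to $\eta^{\mb y}$ without passing through a configuration of energy $\ge \bb H_{\mathrm{min}} + 2$ — follows from an isoperimetric-type argument in the spirit of the proof of \eqref{b01}: any configuration ``between'' two distinct squares must fail to be a square, hence have at least one more particle-hole bond, and in fact one shows the perimeter deficit forces energy at least $\bb H_{\mathrm{min}} + 2$.

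Once the correct generation and time scale $\theta^\beta_m = e^{2\beta}$ are identified, conclusions (1)--(3) of \cite{bl4} translate directly into the three assertions of Theorem \ref{s11}: item (2) gives convergence of the trace process $\xi(e^{2\beta} t)$ to a Markov chain $X(t)$ on the $|\Lambda_L|$ points $\{\eta^{\mb x}\}$ (here each $\mc E^m_i$ is the singleton $\{\eta^{\mb x}\}$, so item (1) is vacuous and the index process is literally the trace process); item (3) gives the negligibility statement \eqref{18}, with $\Delta_m$ the complement of $\Omega^0$. It remains to prove two things the abstract theorem does not hand us for free: first, that $\Omega^0$ really is a full generation of the tree, i.e. the partition at the relevant level is into singletons $\{\eta^{\mb x}\}$ and one ``junk'' set $\Delta_m = \Omega_{L,K}\setminus\Omega^0$; and second, that the limiting jump rate $r(\mb x,\mb y)$ is strictly positive for every pair $\mb x\neq\mb y$. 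For the first point, one uses that distinct ground states communicate at the same height $2$ (established above) together with the fact that no configuration outside $\Omega^0$ can be a ground state, so the recursive construction of \cite{bl4} cannot merge two $\eta^{\mb x}$'s before it reaches time scale $e^{2\beta}$.

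For the strict positivity and the representation of $r(\mb x,\mb y)$ ``in terms of hitting probabilities of simple Markovian dynamics,'' I would use the capacity formula of \cite[Remark 2.9 and Lemma 6.8]{bl2} alluded to in the introduction: $r(\mb x,\mb y)$ is expressed through capacities $\Cap(\eta^{\mb x}, \breve{\mc E})$ and hitting probabilities of the process started from a neighborhood of $\eta^{\mb x}$, and these capacities are strictly positive and of order $e^{-2\beta}$ by the Dirichlet and Thomson variational principles applied to the explicit detach–slide–reattach path (test function / unit flow along that path for the upper bound on the resistance, and a bound on the number of configurations at the saddle for the lower bound). The ``simple Markovian dynamics'' is the effective random walk of the single detached particle moving on the surface of the square and around the torus; the rate $r(\mb x,\mb y)$ decomposes as (rate $\asymp e^{-2\beta}$ to create a free particle) $\times$ (probability that this free particle, performing that surface-and-bulk walk, reattaches so as to form $Q_{\mb y}$ rather than returning to $Q_{\mb x}$ or some other square).

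The main obstacle, I expect, is not the abstract reduction but the \emph{energy-landscape geometry}: proving the sharp lower bound that the communication height between distinct square configurations is exactly $2$ (not $1$ and not $3$), and controlling all the ``flat'' intermediate configurations at energy level $\bb H_{\mathrm{min}}+2$ finely enough to (a) verify that $\Omega^0$ forms a clean generation of the tree and (b) extract the hitting-probability representation of $r(\mb x,\mb y)$ with genuine (not merely non-negative) positivity. In particular, one must understand the full collection of ``$2$-saddle'' configurations — a square with one particle detached and sitting somewhere on its boundary, or the bar/monotone-polygon configurations that arise when a particle is slid along a face — and the combinatorics of how a free particle released from $Q_{\mb x}$ diffuses around the torus and can re-condense into any $Q_{\mb y}$. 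This is exactly the point where the detailed two-dimensional analysis, as opposed to the soft abstract theory, is needed.
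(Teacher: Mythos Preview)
Your high-level plan to invoke \cite{bl4} and identify the time scale $e^{2\beta}$ via the communication height is sound, and your remark that the real work lies in the energy-landscape geometry is exactly right. But your description of the transition mechanism contains a genuine gap that would make the rest of the argument fail.

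You write that one ``detaches a single corner particle \dots\ slides it around \dots\ and re-attaches it to rebuild a translated square.'' This is impossible: moving one particle cannot transform $\eta^{\mb x}$ into $\eta^{\mb y}$ for $\mb y\neq\mb x$, since these configurations differ in at least $n$ sites (an entire row or column). The actual path from $\eta^{\mb x}$ to $\eta^{\mb x+e_i}$ that stays at energy $\le \bb H_{\min}+2$ proceeds by sequentially sliding rows of particles along the sides of the square; it passes through a long chain of intermediate \emph{metastable} configurations at energy $\bb H_{\min}+1$ (the sets $\Omega^1,\dots,\Omega^4$ described in Section~\ref{sec3}), separated by saddles at energy $\bb H_{\min}+2$. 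Consequently the ``simple Markovian dynamics'' in the statement is not the random walk of a single free particle on the torus: the rates $r(\mb x,\mb y)$ are obtained from the \emph{absorption probabilities} $q(\mc E_j,\mc E_{\mb y})$ of an auxiliary Markov chain on these intermediate valleys, which is itself the limit of the Kawasaki dynamics on the shorter time scale $e^{\beta}$ (Proposition~\ref{s16}). Without this two-scale analysis --- first $e^{\beta}$ among the shallow valleys in $\bb H_1$, then $e^{2\beta}$ among the ground states --- you cannot express $r(\mb x,\mb y)$, verify its strict positivity, or justify the claim that the process spends negligible time outside $\Omega^0$ (the latter requires \eqref{16}, which in turn rests on the $e^\beta$-scale picture). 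Your single-particle picture would at best describe the first step $\Omega^0\to\Omega^1$ of this cascade.
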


In the terminology introduced in \cite{bl2}, the previous theorem
states that the sequence of Markov processes $\{\eta^\beta_t : t\ge
0\}$ exhibits a tunneling behavior on the time-scale $e^{2\beta}$,
with metastable sets $\{\{\eta^{\mb x}\} : \mb x\in \Lambda_L\}$,
metastable points $\eta^{\mb x}$ and asymptotic Markov dynamics
characterized by the strictly positive rates $r(\eta^{\mb x},
\eta^{\mb y})$.

\begin{remark}
\label{s21}
The asymptotic rates $r(\mb x, \mb y)$ depend on the parameters $L$
and $n$. We stress that these rates are strictly positive. The
asymptotic behavior is therefore non local, the limit process being
able to jump from a configuration $\eta^{\mb x}$ to any configuration
$\eta^{\mb y}$ with a positive probability.  We present in Corollary
\ref{s20} an explicit formula for these rates in terms of the hitting
probabilities of simple Markovian dynamics, and we examine in
\cite{gl5} the case in which $n$ and $L$ increase with $\beta$,
proving that the trace process $\xi_t$ converges in an appropriate
time scale to a two-dimensional Brownian motion.
\end{remark}

Denote by $\bb H_j$, $j\ge 0$, the set of configurations with energy
equal to $\bb H_{\rm min} + j = -2n(n-1)+j$:
\begin{equation*}
\bb H_j \;=\; \{\eta\in \Omega_{L,K} : \bb H(\eta) =
\bb H_{\rm min} +j\}\;, \quad 
\bb H_{ij} = \bigcup_{k=i}^j \bb H_k\;,
\end{equation*}
and let
\begin{equation}
\label{04}
\Delta_j \;=\; \{\eta\in\Omega_{L,K}: \bb H (\eta) >
\bb H_{\rm min} + j \} \;,
\end{equation}
so that $\bb H_0 = \Omega^0$ and $\{\bb H_{0j}, \Delta_j \}$ forms a
partition of the set $\Omega_{L,K}$.

\begin{remark}
\label{bs1}
The proof of Theorem \ref{s11} requires a precise knowledge of the
energy landscape of the Kawasaki dynamics in the graph
$\Omega_{L,K}$. This description is carried out in Section \ref{sec3},
where we show that the process $\eta_t$ visits solely a tiny portion
of the state space $\Omega_{L,K}$ during an excursion between two
ground states. More precisely, as illustrated in Figure \ref{fig1a},
we show the existence of four disjoint subsets of $\bb H_1$, denoted
by $\Omega^1, \dots, \Omega^4$, and of four subsets of $\bb H_2$,
denoted by $\Gamma_1, \dots, \Gamma_4$, such that, with a probability
converging to $1$ as the temperature vanishes,
\begin{enumerate}
\item After a time of order $e^{2\beta}$, the process jumps from a
  ground state to a configuration in the set $\Gamma_1$;
\item The process spends a time of order $1$ in a set $\Gamma_j$
  before reaching a configuration in $\Omega^{j-1}$ or in $\Omega^j$;
\item After a time of order $e^\beta$, the process jumps from a
  configuration in $\Omega^j$, $1\le j\le 4$, to a configuration in
  the set $\Gamma_j\cup\Gamma_{j+1}$;
\item The set $\Delta_2$ is never visited.
\end{enumerate}

A large portion of the set $\bb H_1$ can be reached from a ground
state only by crossing the set $\Delta_2$.  For example, the
configurations in which the particles form a $(n-k)\times (n+k)$
rectangle, $3\le k<\sqrt{n}$, with an extra row or column of particles
attached to the longest side of the rectangle.  By the previous
discussion, these configurations of the set $\bb H_1$ are never
visited during an excursion between two ground states.
\end{remark}

The simplicity of the energy landscape emerging from Remark \ref{bs1},
and illustrated in Figure \ref{fig1a}, is one of the main by-products of
this article. The proof of the convergence of the Kawasaki dynamics to
a Brownian motion in \cite{gl5} relies strongly on this description.

The article is organized as follows. In the next section we present a
sketch of the argument and in Section \ref{sec3} a description of the
shallow valleys visited during an excursion between two ground
states. In Section \ref{sec4}, we describe the evolution of the Markov
process among these shallow valleys in the time scale $e^\beta$, and
in Section \ref{sec2} the asymptotic behavior of the Kawasaki dynamics
among the ground states in the time scale $e^{2\beta}$.

{\tiny
\begin{figure}[htb]
  \centering
  \def\svgwidth{350pt}
  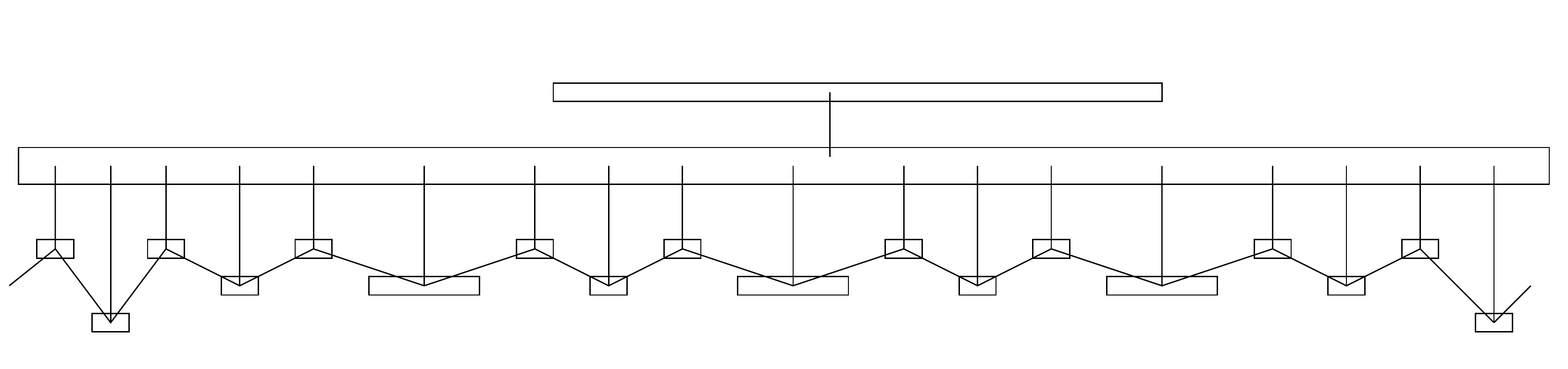
  \caption{The energy landscape of the Kawasaki dynamics at low
    temperature. $\Omega^0$ represents the set of ground states,
    $\Omega^j$, $1\le j\le 4$, disjoint subsets of $\bb H_1$,
    $\Gamma_j$, $1\le j\le 4$, disjoint subsets of $\bb H_2$, and
    $\Lambda= \bb H_1 \setminus [\cup_{1\le j\le 4} \Omega^j]$. The
    edges indicate that a configuration from one set may jump to the
    other. At low temperatures, during an excursion between two ground
    states the process does not visit the set $\Delta_2$ and all the
    analysis is reduced to the lower portion of the picture.}
\label{fig1a}
\end{figure}
}

\section{Sketch of the proof}

The proof of Theorem \ref{s11} relies on the strategy presented in
\cite{bl4} to prove the metastability of reversible Markov processes
evolving on finite state spaces. We do not investigate the full tree
structure of the chain, presented in the introduction, but a small
portion of it contained in the first and second generation of the tree.

A simple computation shows that assumptions (2.1) and (2.2) of that
article are satisfied. Indeed, since $\bb H(\sigma^{x,y}\eta) - \bb
H(\eta) = (\eta_y - \eta_x)\{\sum_{\Vert z-y\Vert =1} \eta_z -
\sum_{\Vert z-x\Vert =1}\eta_z + \eta_y - \eta_x\}$, the jump rates
$c_{x,y} (\eta)$ may only assume the values $1$, $e^{-\beta}$,
$e^{-2\beta}$ and $e^{-3\beta}$, which proves assumptions (2.1) and
(2.2).

Denote by $R_\beta(\eta,\xi)$ the rate at which the process
$\eta_t$ jumps from $\eta$ to $\xi$ so that $R_\beta(\eta,\xi) =
c_{x,y}(\eta)$ if $\xi=\sigma^{x,y}\eta$ for some bond $\{x,y\}\in
\Lambda_L^*$, and $R_\beta(\eta,\xi) = 0$, otherwise.

A self-avoiding path $\gamma$ from $\mc A$ to $\mc B$, $\mc A$, $\mc B\subset
\Omega_{L,K}$, $\mc A\cap \mc B = \varnothing$, is a sequence of
configurations $(\xi_0, \dots, \xi_n)$ such that $\xi_0\in \mc A$,
$\xi_n\in \mc B$, $\xi_i \not = \xi_j$, $i\not =j$, $R_\beta(\xi_j,
\xi_{j+1})>0$, $0\le j<n$. Denote by $\Gamma_{\mc A,\mc B}$ the set of
self-avoiding paths from $\mc A$ to $\mc B$ and let
\begin{equation*}
G_K (\mc A,\mc B) \;:=\; \max_{\gamma\in \Gamma_{\mc A,\mc B}} G_K(\gamma)\;, \quad
G_K(\gamma) \;=\;  G^\beta_K(\gamma) \;:=\; \min_{0\le i<n} \mu_K(\xi_i) R_\beta
(\xi_i,\xi_{i+1})
\end{equation*}
if $\gamma = (\xi_0, \dots, \xi_n)$. Since $\mu_K(\xi_i) R_\beta
(\xi_i,\xi_{i+1}) = \min\{\mu_K(\xi_i), \mu_K(\xi_{i+1})\}$,
$G_K(\gamma) = \min_{0\le i\le n} \mu_K(\xi_i)$ and
$G_K(\mc A,\mc B)$ is the measure of the saddle configuration from $\mc A$ to
$\mc B$. 

Denote by $D_K=D^\beta_K$ the Dirichlet form associated to the
generator of the Markov process $\eta_t$:
\begin{equation*}
D_K(f)\;=\; \frac 12 \, \sum_{\{x,y\}\in\Lambda^*_L} 
\sum_{\xi\in \Omega_{L,K}} \mu_K(\xi) \, 
c_{x,y}(\xi) \, \{f(\sigma^{x,y} \xi) - f(\xi)\}^2 \;, 
\quad f:\Omega_{L,K}\to \bb R\;. 
\end{equation*}
Let $\Cap_K(\mc A,\mc B) = \Cap^\beta_K(\mc A,\mc B)$, $\mc A$, $\mc
B\subset \Omega_{L,K}$, $\mc A\cap \mc B=\varnothing$, be the capacity
between $\mc A$ and $\mc B$:
\begin{equation*}
\Cap_K(\mc A,\mc B) \;=\; \inf_f D_K (f)\;,
\end{equation*}
where the infimum is carried over all functions $f: \Omega_{L,K}\to
\bb R$ such that $f(\xi) =1$ for all $\xi\in \mc A$, and $f(\xi) =0$ for
all $\xi\in \mc B$. We proved in \cite[Lemma 4.2 and 4.3]{bl4} that the
ratio $\Cap_K(\mc A,\mc B)/G_K(\mc A,\mc B)$ converges as
$\beta\uparrow\infty$: For every $\mc A$, $\mc B\subset \Omega_{L,K}$, $\mc A\cap
\mc B=\varnothing$,
\begin{equation}
\label{17}
\lim_{\beta\to \infty} \frac{\Cap_K(\mc A,\mc B)}{G_K(\mc A,\mc B)} \;=\;
C(\mc A,\mc B) \;\in\; (0,\infty)\;.
\end{equation}

We claim that $G_K(\{\eta^{\mb x}\}, \{\eta^{\mb y}\}) =
e^{-2\beta} \mu_K(\eta^{\mb x})$ for $\mb x \not =\mb y$.  Denote
by $e_1$, $e_2$ the canonical basis of $\bb R^2$.  On the one hand,
any path $\gamma$ from $\eta^{\mb x}$ to a set $\mc A\not\ni \eta^{\mb x}$
is such that $G_K(\gamma)\le e^{-2\beta} \mu_K(\eta^{\mb
  x})$. On the other hand, it is easy to construct a self-avoiding
path $\gamma =(\eta^{\mb x}=\xi_0, \dots, \xi_n=\eta^{\mb x+e_i})$
from $\eta^{\mb x}$ to $\eta^{\mb x+e_i}$, and therefore a path from
$\eta^{\mb x}$ to $\eta^{\mb y}$, such that $\mu_K(\xi_j)\ge
e^{-2\beta} \mu_K(\eta^{\mb x})$, $0\le j\le n$. This proves the
claim.

It follows from the previous claim and from \eqref{17} that
$\Cap_K(\{\eta^{\mb x}\}, \{\eta^{\mb y}\})$ is of order
$e^{-2\beta} \mu_K(\eta^{\mb x})$. In particular, to examine the
evolution of the process $\eta_t$ among the competing metastable
states $\eta^{\mb x}$ we need only to care of the states whose measure
are greater than or equal to $e^{-2\beta} \mu_K(\eta^{\mb
  x})$. Actually, as pointed out in Remark \ref{bs1}, only a much
smaller class is relevant for the problem.

In the next section we define the sets $\Omega^1, \dots, \Omega^4$
introduced in Remark \ref{bs1}. In the following section we show that
starting from a configuration in $\cup_{0\le j\le 4} \Omega_j$ in the
time scale $e^\beta$ the Kawasaki dynamics evolves as a markov chain
whose points are subsets of the sets $\Omega^j$. In this chain the
configurations $\eta^{\mb x}$ are absorbing points and the jump rates
are expressed as functions of the hitting probabilities of simple
Markovian dynamics.

In Section \ref{sec2}, we deduce from the previous result the
tunneling behavior of the process $\eta_t$ on the longer time scale
$e^{2\beta}$ among the competing metastable states $\eta^{\mb x}$. The
jump rates of this dynamics are expressed in terms of the hitting
probabilities of the absorbing states for the Markovian dynamics
derived in the previous step.

We conclude this section recalling the definition of a valley
presented in \cite{bl2}.  Denote by $H_\Pi$, $H^+_{\Pi}$, $\Pi\subset
\Omega_{L,K}$, the hitting time and the time of the first return to
$\Pi$:
\begin{equation*}
\begin{split}
& H_\Pi \;=\; \inf \big \{t>0 : \eta^\beta_t \in \Pi \big\}\;,
\\
& \quad H^+_\Pi \;=\; \inf \big \{t>0 : \eta^\beta_t \in \Pi 
\text{ and } \exists\; 0<s<t \,;\, \eta^\beta_s\not\in\Pi \big\}\;.    
\end{split}
\end{equation*}
We sometimes write $H(\Pi)$, $H^+(\Pi)$ instead of $H_\Pi$,
$H^+_\Pi$.

Consider two subsets $\mc W\subset \mc B$ of the state space $\Omega_K$
and a configuration $\eta\in\mc W$. The triple $(\mc W, \mc B,\eta)$
is called a valley if:
\begin{itemize}
\item Starting from any configuration of $\mc W$ the process visits
$\eta$ before hitting $\mc B^c$:
\begin{equation*}
\lim_{\beta\to\infty} \max_{\xi\in \mc W} \mb P^\beta_\xi [ H_{\mc B^c} < H_\eta ]
\;=\; 0\;.
\end{equation*}
\item There exists a sequence $m_\beta$ such that, for every $\xi\in\mc
  W$, $H_{\mc B^c}/m_\beta$ converges to a mean $1$ exponential random
  variable under $\mb P^\beta_\xi$. The sequence $m_\beta$ is called
  the depth of the valley.
\item The portion of time the process spends in $\mc B\setminus\mc W$
before hitting $\mc B^c$ is negligible: for every $\xi\in\mc W$ and
every $\delta>0$,
\begin{equation*}
\lim_{\beta\to\infty} \mb P^\beta_\xi \Big[  \frac 1{m_\beta} \int_0^{H_{\mc
    B^c}}  \mb 1\{ \eta_s \in \mc B \setminus \mc W\} > \delta \Big]
\;=\; 0\;.  
\end{equation*}
\end{itemize}

\section{Some shallow valleys}
\label{sec3}

We examine in this section the evolution of the Markov process
$\{\eta^\beta_t : t\ge 0\}$ between two consecutive visits to the
ground states $\{\eta^{\mb x} : \mb x\in\Lambda_L\}$. In the next
section, we show that at very low temperatures, in the time scale
$e^\beta$, much smaller than the time scale of an excursion between
ground states, the process $\eta_t$ evolves as a continuous-time
Markov chain whose state space consists of subsets of $\bb H_{01}$. In
this asymptotic dynamics the ground states play the role of absorbing
states. We present in this section the subsets of $\bb H_{01}$ which
become the points of the asymptotic dynamics and show that these sets
are valleys. We also provide explicit formulas for the jump rates of
the asymptotic dynamics in terms of four elementary Markov processes.

For a subset $B$ of $\Lambda_L$, denote by $\partial_+ B$ the
outer boundary of $B$. This is the set of sites which are at
distance one from $B$:
\begin{equation*}
\partial_+ B = \{\mb x \in\Lambda_L\setminus B : \exists
\, \mb y \in B \,,\, \Vert \mb y - \mb x\Vert =1 \} \;.
\end{equation*}

\subsection{Elementary Markov processes}
The jump rates among the shallow valleys introduced below are all
expressed in terms of the hitting probabilities of four elementary,
finite-state, continuous-time Markov chains. We present in this
subsection these processes and derive some identities needed later.
Let $\{\mb x_t : t\ge 0\}$ be the nearest-neighbor, symmetric random
walk on $\Lambda_L$ which jumps from a site $\mb x$ to $\mb x \pm e_i$
at rate $1$.  Denote by $\bb P^{\mb x}_{\mb y}$, $\mb y\in \Lambda_L$,
the probability measure on $D(\bb R_+, \Lambda_L)$ induced by $\mb
x_t$ starting from $\mb x$. We sometimes represent $\mb x_t$ by $\mb
x(t)$. Denote by $\mf p(\mb y, \mb z, G)$, $\mb y\in\Lambda_L$, $\mb
z\in G$, $G\subset\Lambda_L$, the probability that the random walk
starting from $\mb y$ reaches $G$ at $\mb z$:
\begin{equation*}
\mf p(\mb y, \mb z, G) \;:=\; \bb P^{\mb x}_{\mb y}
\big[\mb x (H_G) = \mb z \big]\; .
\end{equation*}
By extension, for a subset $A$ of $\Lambda_L$, let $\mf p(\mb x,
A, G) = \sum_{\mb y\in A} \mf p(\mb x, \mb y, G)$.  Moreover, when
$G=\partial_+Q$, we omit the set $G$ in the notation: $\mf p(\mb x,
A) \;:=\; \mf p(\mb x, A, \partial_+ Q)$. Let, finally,
\begin{equation}
\label{b09}
\mf p(A) \;:=\; \mf p(\mb w_2 + 2e_2 , A) 
\;+\; \mf p(\mb w_2 + e_1 + e_2 , A) \;.
\end{equation}
\smallskip

Let $\mb y_t = (\mb y^1_t, \mb y^2_t)$ be the continuous-time Markov
chain on $D_n = \{(j,k) : 0\le j < k\le n-1\} \cup \{(0,0)\}$ which
jumps from a site $\mb y$ to any of its nearest-neighbor sites $\mb
z$, $\Vert \mb y -\mb z\Vert =1$, at rate $1$. Let $D^+_n = \{(j,n-1)
: 0\le j<n-1\}$ and let
\begin{equation}
\label{b02}
\mf q_n = \bb P^{\mb y}_{(0,1)} \big[ H_{D^+_n} < H_{(0,0)}\big]\;,
\end{equation}
where $\bb P^{\mb y}_{(0,1)}$ stands for the distribution of $\mb y_t$
starting from $(0,1)$. \smallskip

Let $E_n = \{0,\dots, n-1\}^2$ and let $\mb z_t = (\mb z^1_t, \mb
z^2_t)$ be the continuous-time Markov chain on $E_n \cup \{\mf d\}$
which jumps from a site $\mb z\in E_n$ to any of its nearest-neighbor
sites $\mb z'\in E_n$, $\Vert \mb z' -\mb z\Vert =1$, at rate $1$, and
which jumps from $(1,1)$ (resp. from $\mf d$) to $\mf d$ (resp. to
$(1,1)$) at rate $1$. Let $E^+_n = \{(j,n-1) : 0\le j\le n-1\}$,
$E^-_n = \{(j,0) : 1\le j\le n-1\}$, $\partial E_n = E^+_n \cup E^-_n
\cup \{(0,0)\}$ and for $0\le k\le n-1$, let
\begin{equation}
\label{b03}
\begin{split}
& \mf r^+_n = \bb P^{\mb z}_{(0,1)} \big[ H_{\partial E_n} =
H_{E^+_n}\big]\;, \quad
\mf r^-_n = \bb P^{\mb z}_{(0,1)} \big[ H_{\partial E_n} =
H_{E^-_n}\big]\;, \\
&\qquad \mf r^0_n (k) = \bb P^{\mb z}_{(k,1)} \big[ H_{\partial E_n} =
H_{(0,0)}\big]\;, \quad \mf r_n = \mf r^+_n + \mf r^-_n\; ,
\end{split}
\end{equation}
where $\bb P^{\mb z}_{(k,1)}$ stands for the distribution of $\mb z_t$
starting from $(k,1)$. Note that the values of $\mf r^\pm_n$, $\mf
r^0_n (k)$ are unchanged if we consider the trace of $\mb z_t$ on the
set $\{0,\dots, n-1\}^2$. This latter process is a
nearest-neighbor random walk on $\{0,\dots, n-1\}^2$ whose
holding time at $(1,1)$ is longer. The embedded discrete-time chain of
the trace process is the symmetric, nearest-neighbor random walk. In
particular, $\mf r^+_n = (n-1)^{-1}$.

We claim that
\begin{equation}
\label{b10}
\sum_{k=1}^{n-1} \mf r^0_n (k) \;=\; \mf r^-_n \;.
\end{equation}
Indeed, denote by $\bb Z_k$ the embedded, discrete-time chain on
$E_n$. Outside of the boundary $\partial E_n$, $\bb Z_k$ jumps
uniformly to one of its neighbors. At the boundary $\partial E_n$, it
jumps with probability $1$ to the unique neighbor in the interior $E_n
\setminus \partial E_n$. Therefore,
\begin{equation*}
\mf r^0_n (k) \;=\; \bb P^{\mb z}_{(k,1)} \big[ H_{\partial E_n} =
H_{(0,0)}\big]\; =\; \sum_{\gamma} p(\gamma) \, 3^{-1}
\; =\; \sum_{\gamma} \pi(k,1)\, p(\gamma) \, \frac 1{3\pi(k,1)}\;,
\end{equation*}
where the sum is carried over all paths $\gamma$ from $(k,1)$ to
$(0,1)$ which never pass by $\partial E_n$. The factor $1/3$
represents the probability to jump from $(0,1)$ to $(0,0)$ and $\pi$
the reversible stationary measure for the chain $\bb Z_k$, which is
proportional to the degree of the vertices. By reversibility, the
previous sum is equal to
\begin{equation*}
\sum_{\gamma'} \pi(0,1)\, p(\gamma') \, \frac 1{3\pi(k,1)} 
\;=\; \sum_{\gamma'} p(\gamma') \, \frac 1{Z_n\, \pi(k,1)} \;,
\end{equation*}
where the sum is now carried over all paths $\gamma'$ from $(0,1)$ to
$(k,1)$ which never pass by $\partial E_n$ and $Z_n$ is the sum of the
degrees of all vertices. Last expression is equal to $\bb P^{\mb
  z}_{(0,1)} \big[ H_{\partial E_n} = H_{(k,0)}\big]$. Summing over
all $k$ yields \eqref{b10}. \smallskip

Finally, consider two independent, nearest-neighbor, continuous-time,
random walks $\mb u_t$, $\mb v_t$ evolving on an interval $J=\{m,
\dots, M\}$, $m<M$, which jump from $k$ (resp. $k+1$) to $k+1$
(resp. $k$), $m\le k<M$, at rate $1$.  Let $H_1 = \inf\{t>0: |\mb
u_t-\mb v_t|=1\}$. For $a,a+2\in J$, $b,b+1\in J$, let
\begin{equation}
\label{b13}
\mf m(J,a,b) \;:=\; \bb P^{\mb u\mb v}_{(a,a+2)} \big[ (\mb u_{H_1},\mb v_{H_1}) =
(b,b+1)\big]\; .
\end{equation}
where $\bb P^{\mb u\mb v}_{(a,a+2)}$ stands for the distribution of
the pair $(\mb u_t, \mb v_t)$ starting from $(a,a+2)$.

\subsection{The distribution of $\eta(H^+_{\bb H_{01}})$ starting from a
  ground state}
\label{ss5} 

Denote by $\mc N(\eta^{\mb x})$ the set of eight configurations which
can be obtained from $\eta^{\mb x}$ by a rate $e^{-2\beta}$ jump. Two
of these configurations deserve a special notation, $\eta^\star_1 =
\sigma^{\mb w_2, \mb w_2+e_2} \eta^{\mb w}$ and $\eta^\star_2 =
\sigma^{\mb w_2, \mb w_2+e_1} \eta^{\mb w}$.

\begin{lemma}
\label{bs01}
For each $\eta\in \mc N(\eta^{\mb x})$, there exists a probability
measure $\bb M(\eta , \cdot)$ on $\bb H_{01}$ such that
\begin{equation}
\label{b04}
\bb M(\eta, \mc A) \;:=\; \lim_{\beta\to\infty} \mb P_{\eta} 
[ H_{\bb H_{01}} = H_{\mc A} ]
\end{equation}
for all $\mc A \subset \bb H_{01}$. Set $\bb M_j (\cdot) = \bb
M(\eta^\star_j , \cdot)$, $j=1$, $2$. Then,
\begin{equation*}
\begin{split}
& \bb M_1(\mc E^{0,0}_{\mb w}) \;=\; \frac 12 \Big\{ \,
\frac {1 + \mf r^-_n + \mf A_{1,2}}
{4 + \mf q_n + \mf r_n - \mf A}
\;+\; \frac{1 + \mf r^-_n + \mf A_2 - \mf A_1}
{4 + \mf q_n + \mf r_n + \mf A(e_1) - \mf A(e_2)}
 \Big\} \; , \\
& \quad \bb M_2(\mc E^{0,0}_{\mb w}) \;=\; \frac 12 \Big\{ \,
\frac {1 + \mf r^-_n + \mf A_{1,2}}
{4 + \mf q_n + \mf r_n - \mf A}
\;-\;
\frac{1 + \mf r^-_n + \mf A_2 - \mf A_1}
{4 + \mf q_n + \mf r_n + \mf A(e_1) - \mf A(e_2)}
\Big\} \;, 
\end{split}
\end{equation*}
where $\mf A(e_i) = \mf p(\mb w_2 + e_i)$, $\mf A_j = \mf
p(Q^{2,j}_{\mb w})$ and
\begin{equation*}
\mf A \;=\; \; \mf A(e_1) \;+\; \mf A(e_2)\;,
\quad \mf A_{1,2} \;=\; \mf A_1 \;+\; \mf A_2 \;,
\quad \mf A_{0,3} \;=\; \mf A_0 \;+\; \mf A_3 \;.
\end{equation*}
\end{lemma}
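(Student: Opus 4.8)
The plan is to reduce the computation of the limiting hitting distribution $\bb M(\eta,\cdot)$ to a calculation with the four elementary Markov chains introduced above. The starting point is that for $\eta\in\mc N(\eta^{\mb x})$, the configuration $\eta$ is obtained from the square $\eta^{\mb x}$ by detaching one particle from a corner or an edge and placing it at an adjacent free site; thus $\eta$ has energy $\bb H_{\rm min}+2$, i.e. $\eta\in\bb H_2$. From such a configuration the process can, at rate $O(1)$, move the free particle around the boundary of the square (a random walk on $\partial_+Q_{\mb x}$, described by $\mf p$), or it can, again at rate $O(1)$, slide the ``protruding'' row/column so that the defect migrates along a side of the square — these are exactly the moves captured by the chains $\mb y_t$ (the corner chain, giving $\mf q_n$), $\mb z_t$ (the side chain with an absorbing exit, giving $\mf r^\pm_n,\mf r^0_n(k)$), and the pair $(\mb u_t,\mb v_t)$ (two interfaces, giving $\mf m(J,a,b)$). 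All competing moves that would raise the energy further occur at rate $O(e^{-\beta})$ or smaller, hence are negligible on the $O(1)$ time scale on which the process either returns to $\bb H_{01}$ or falls back; this is where assumption \eqref{c1} and the estimate \eqref{17} are used to justify that the relevant excursion stays inside the shallow portion of the landscape and that $\Delta_2$ is not visited. So the first step is to identify precisely the finite graph on $\bb H_{1}\cup\bb H_{2}$ reachable from $\eta$ by rate-$O(1)$ moves, and to observe that on this graph the rescaled process converges to an explicit finite-state Markov chain whose absorbing states are the configurations of $\bb H_{01}$.

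Given this reduction, the second step is a first-step / linear-algebra computation on that finite chain. The limiting measure $\bb M(\eta,\mc A)$ is the absorption probability of the limiting chain at the set $\mc A$, starting from $\eta$. I would set up the harmonic-equation system for these absorption probabilities, exploiting the product/Cartesian structure: moving the detached particle and sliding the interface are essentially independent excursions, so the hitting probabilities factor through the quantities $\mf p(\,\cdot\,)$, $\mf q_n$, $\mf r^\pm_n$, $\mf r^0_n(k)$, and $\mf m$ already isolated. Concretely, for $\eta=\eta^\star_j$ one decomposes according to the first ``decisive'' event: either the defect is reabsorbed into a side of the square at a given location (probabilities built from $\mf r^\pm_n$ and the $\mf A_i=\mf p(Q^{2,i}_{\mb w})$), or the corner chain returns to $(0,0)$ (weight involving $\mf q_n$), or the free particle wanders to a far face of the square and nucleates there (the $\mf A(e_i)=\mf p(\mb w_2+e_i)$ terms). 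Summing the resulting geometric series over repeated unsuccessful attempts produces the denominators $4+\mf q_n+\mf r_n\mp\mf A$ and $4+\mf q_n+\mf r_n+\mf A(e_1)-\mf A(e_2)$, and the symmetry/antisymmetry between $\eta^\star_1$ and $\eta^\star_2$ (which differ by reflecting $e_1\leftrightarrow e_2$ at the corner $\mb w_2$) yields the $\tfrac12\{\cdots\pm\cdots\}$ structure of $\bb M_1$ and $\bb M_2$. The existence of the limit in \eqref{b04} follows because each elementary hitting probability converges trivially (the elementary chains do not depend on $\beta$) and the error terms from the rate-$O(e^{-\beta})$ moves vanish.

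The main obstacle, I expect, is the bookkeeping in the first step: correctly enumerating the $O(1)$-rate moves out of each intermediate configuration and proving that the process genuinely behaves, in the limit, as the asserted product of elementary chains — in particular that an excursion which wanders the detached particle all the way around the square before reattaching contributes exactly through $\mf p(\cdot)$ with no lower-order corrections, and that configurations requiring a climb over $\Delta_2$ are never reached. This is precisely the content that Remark \ref{bs1} and the landscape analysis of Section \ref{sec3} are meant to supply, so in the proof I would lean on the valley structure established there: each $\Omega^j$, $\Gamma_j$ is shown to be a valley (in the sense recalled at the end of the sketch), the depths are $O(1)$ or $O(e^{\beta})$, and the trace of $\eta_t$ on $\bb H_{01}\cup(\cup_j\Omega^j\cup\Gamma_j)$ converges to the elementary-chain description. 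Once that is in hand, the remaining step is the elementary (if lengthy) resolution of a small linear system, which I would not grind through here.
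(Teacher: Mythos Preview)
Your proposal is essentially on the right track and matches the paper's approach: a first-step decomposition from $\eta^\star_j$ that, after tracking the rate-$1$ excursion until the process either returns to $\{\eta^\star_1,\eta^\star_2\}$ or lands in $\bb H_{01}$, yields a closed $2\times 2$ linear system for $\bb M_1,\bb M_2$; adding and subtracting the two equations produces the symmetric and antisymmetric pieces in the stated formula.

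That said, a few of your details are off. First, the pair chain $(\mb u_t,\mb v_t)$ and the quantity $\mf m(J,a,b)$ play no role in this lemma; only $\mf p$, $\mf q_n$, and $\mf r^\pm_n$ enter. The four rate-$1$ neighbours of $\eta^\star_1$ not already in $\bb H_{01}$ split into two ``detached-particle'' configurations (handled by $\mf p$), one ``hole-on-the-side/particle-on-top'' configuration (handled by the $\mb z_t$ chain, giving $\mf r^\pm_n$), and one ``hole-in-the-top-row'' configuration (handled by the $\mb y_t$ chain, giving $\mf q_n$). Second, your plan to ``lean on the valley structure established in Section~\ref{sec3}'' is circular: this lemma is the first result of that section and is proved from scratch by the first-jump analysis just described; the valleys are built afterward on top of it. Third, the mechanism is not a product/independence structure nor a geometric series, but simply that from each intermediate state the elementary chains compute exactly the probability of returning to $\eta^\star_1$, to $\eta^\star_2$, or of being absorbed in some $\mc E^{i,j}_{\mb w}$ or $\{\eta^{\mb w}\}$; plugging these into the first-jump identity for $\mb P_{\eta^\star_j}[H_{\bb H_{01}}=H_{\mc A}]$ gives the linear system directly, with the $e^{-\beta}$ rates contributing only an $\epsilon(\beta)\to 0$ error.
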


\begin{proof}
We prove this lemma for $\eta=\eta^\star_1$, $\mc A = \mc E^{2,2}_{\mb
  0}$ and leave the other cases to the reader. Recall the definition
of $\mf q_n$, $\mf r^\pm_n$ introduced in \eqref{b02}, \eqref{b03}. We
claim that any limit point $\bb M_j = \bb M_j(\mc E^{2,2}_{\mb w})$ of the
sequences $\mb P_{\eta^\star_j} [ H_{\bb H_{01}} = H_{\mc A} ]$
satisfies the equations
\begin{equation}
\label{b05}
\begin{split}
& (4 + \mf q_n + \mf r_n - \mf A) \, (\bb M_1 + \bb M_2) \;=\;
1\;+\; \mf A_{1,2} + \mf r^-_n \;.
\end{split}
\end{equation}


To prove \eqref{b05}, assume that $\mb P_{\eta^\star_j} [ H_{\bb
  H_{01}} = H_{\mc A} ]$ converges and observe that the configuration
$\eta^\star_1$ may jump at rate $1$ to $6$ configurations and at rate
$e^{-\beta}$ or less to $O(n)$ configurations. Among the
configurations which can be reached at rate $1$ two belong to $\bb
H_{01}$, one of them being $\eta^{\mb w}$ and the other $\sigma^{\mb
  w_2, \mb w_2+e_2-e_1} \eta^{\mb w} \in\mc A$. Hence, if we denote by
$\mc N_2(\eta^\star_1)$ the set of the remaining four configurations
which can be reached from $\eta^\star_1$ by a rate $1$ jump,
decomposing $\mb P_{\eta^\star_1} \big[ H_{\bb H_{01}} = H_{\mc A}
\big]$ according to the first jump we obtain that
\begin{equation}
\label{b06}
6\, \mb P_{\eta^\star_1} \big[ H_{\bb H_{01}} = H_{\mc A} \big]\;=\;
1 \;+\; \sum_{\eta' \in \mc N_2(\eta^\star_1)}
\mb P_{\eta'} \big[ H_{\bb H_{01}} = H_{\mc A}
\big]  \;+\; \epsilon (\beta)\;,
\end{equation}
where $\epsilon (\beta)$ is a remainder which vanishes as
$\beta\uparrow\infty$. 

We examine the four configurations of $\mc N_2(\eta^\star_1)$
separately. In two configurations, $\sigma^{\mb w_2, \mb w_2+2e_2}
\eta^{\mb w}$ and $\sigma^{\mb w_2, \mb w_2+e_2+e_1} \eta^{\mb w}$, a
particle is detached from the quasi-square $Q^{2}_{\mb w}$.  The
detached particle performs a rate $1$ symmetric random walk on
$\Lambda_L$ until it reaches the outer boundary of the square $Q_{\mb
  0}$. Denote by $H_{\rm hit}$ the time the detached particle hits the
outer boundary of the square $Q_{\mb w}$. Among the remaining
particles, two jumps have rate $e^{-\beta}$ and the other ones have
rate at most $e^{-2\beta}$. Therefore, by the strong Markov property,
for $\eta' = \sigma^{\mb w_2, \mb w_2+2e_2} \eta^{\mb w}$,
$\sigma^{\mb w_2, \mb w_2+e_2+e_1} \eta^{\mb w}$,
\begin{equation*}
\begin{split}
\mb P_{\eta'} \big[ H_{\bb H_{01}} = H_{\mc A}
\big] \;& =\; \mb P_{\eta'} \big[ \eta(H_{\rm hit} ) \in  \mc
E^{2,2}_{\mb w} \big] \\
& +\; 
\sum_{i=1}^2 \mb P_{\eta'} \big[ \eta(H_{\rm hit} ) = \eta^\star_i \big] \, 
\mb P_{\eta^\star_i} \big[ H_{\bb H_{01}} = H_{\mc A}\big] 
\;+\; \epsilon(\beta) \;.
\end{split}
\end{equation*}
By definition \eqref{b09} of $\mf p$, the contribution of the terms
$\eta' = \sigma^{\mb w_2, \mb w_2+2e_2} \eta^{\mb w}$ and $\eta' =
\sigma^{\mb w_2, \mb w_2+e_2+e_1} \eta^{\mb w}$ to the sum appearing
on the right hand side of \eqref{b06} is
\begin{equation}
\label{b07}
\mf p(\mb w_2 + e_2) \,
\mb P_{\eta^\star_1} \big[ H_{\bb H_{01}} = H_{\mc A}\big] 
\;+\; \mf p(\mb w_2 + e_1) \,
\mb P_{\eta^\star_2} \big[ H_{\bb H_{01}} = H_{\mc A}\big]  \;+\; 
\mf p(Q^{2,2}_{\mb w}) \;+\; \epsilon (\beta) \;.
\end{equation} 

It remains to analyze the two configurations of $\mc N_2(\eta)$,
$\eta' = \sigma^{\mb w_2-e_2, \mb w_2+e_2} \eta^{\mb w}$ and $\eta' =
\sigma^{\mb w_2-e_1, \mb w_2+e_2} \eta^{\mb w}$.  In the first one, if
we denote by $\mb z^1_t$ the horizontal position of the particle
attached to the top side of the square $Q$ and by $\mb z^2_t$ the
vertical position of the hole on the left side of the square, it is
not difficult to check that $(\mb z^1_t, \mb z^2_t)$ evolves as the
Markov chain described just before \eqref{b03} with initial condition
$(\mb z^1_0, \mb z^2_0) = (0,1)$.

Denote by $H_{\rm hit}$ the time the hole hits $0$ or $n-1$.  Since
the the hole moves at rate $1$, and since all the other $O(n)$
possible jumps have rate at most $e^{-\beta}$, with probability
increasing to $1$ as $\beta\uparrow\infty$, $H_{\rm hit}$ occurs
before any rate $e^{-\beta}$ jump takes place.  At time $H_{\rm hit}$
three situations can happen. If the process $(\mb z^1_t, \mb z^2_t)$
reached $(0,0)$ (resp. $E^+_n$, $E^-_n$), the process $\eta(t)$
returned to the configuration $\eta^\star_1$ (resp. hitted a configuration
in $\mc E^{1,2}_{\mb w}$, $\mc E^{2,2}_{\mb w}$). Hence, by definition
of $\mf r^\pm_n$,
\begin{equation*}
\mb P_{\eta'} \big[ H_{\bb H_{01}} = H_{\mc A} \big] \;=\; 
\mf r^-_n \;+\; (1-\mf r_n)\,
\mb P_{\eta^\star_1} \big[ H_{\bb H_{01}} = H_{\mc A} \big] \;+\; \epsilon
(\beta) 
\end{equation*}
for $\eta' = \sigma^{\mb w_2-e_2, \mb w_2+e_2} \eta^{\mb w}$. 

Assume now that $\eta' = \sigma^{\mb w_2-e_1, \mb w_2+e_2} \eta^{\mb
  0}$.  In this case, if we denote by $\mb y^1$ the horizontal
position of the particle attached to the side of the quasi-square and
by $\mb y^2$ the horizontal position of the hole, the pair $(\mb y^1_t
, \mb y^2_t)$ evolve according to the Markov process introduced just
before \eqref{b02} with initial condition $(\mb y^1_0 , \mb y^2_0) =
(0 , 1)$.  Denote by $H_{\rm hit}$ the time the hole hits $0$ or
$n-1$. Here again, since the attached particle and the hole move at
rate $1$ and since all the other $O(n)$ jumps have rate at most
$e^{-\beta}$, with asymptotic probability equal to $1$, $H_{\rm hit}$
occurs before any rate $e^{-\beta}$ jump takes place. At time $H_{\rm
  hit}$, if $\mb y^2_{H_{\rm hit}}=0$, the process $\eta(t)$ has
returned to the configuration $\eta^\star_1$, while if $\mb y^2_{H_{\rm
    hit}}=n-1$, the process $\eta(t)$ has reached a configuration in
$\mc E^{3,2}_{\mb w}$. Since the the random walk reaches $n-1$ before
$0$ with probability $\mf q_n$, by the strong Markov property
\begin{equation*}
\mb P_{\eta'} \big[ H_{\bb H_{01}} = H_{\mc A} \big] \;=\; 
(1-\mf q_n)\,
\mb P_{\eta^\star_1} \big[ H_{\bb H_{01}} = H_{\mc A} \big] \;+\;\epsilon
(\beta) 
\end{equation*}
for $\eta' = \sigma^{\mb w_2-e_1, \mb w_2+e_2} \eta^{\mb w}$. 

Therefore, the contribution of the last two configurations of $\mc
N_2(\eta^\star_1)$ to the sum on the right hand side of \eqref{b06} is
\begin{equation}
\label{b08}
\mf r^-_n \;+\; 
\Big( 2 - \mf r_n - \mf q_n\Big) \, 
\mb P_{\eta^\star_1} \big[ H_{\bb H_{01}} = H_{\mc A} \big] \;+\; \epsilon
(\beta) \;.
\end{equation}

Equations \eqref{b06}, \eqref{b07} and \eqref{b08} yield a linear
equation for $\bb M_1=\mb P_{\eta^\star_1} [ H_{\bb H_{01}} = H_{\mc A} ]$
in terms of $\bb M_1$ and $\bb M_2=\mb P_{\eta^\star_2} [ H_{\bb H_{01}} =
H_{\mc A} ]$. Analogous arguments provide a similar equation for $\bb M_2$
in terms of $\bb M_1$ and $\bb M_2$. Adding these two equations we obtain
\eqref{b05}, while subtracting them gives a formula for the difference
$\bb M_1-\bb M_2$. The assertion of the lemma follows from these equations for
$\bb M_1+\bb M_2$ and $\bb M_1-\bb M_2$.
\end{proof}

Similar computations to the ones carried over in the previous proof
permit to derive explicit expressions for $\bb M_j$. For example, we
have that
\begin{equation*}
\bb M_1(\eta^{\mb w}) \;=\; \bb M_2(\eta^{\mb w}) \;=\; 
\frac {1} {4 + \mf q_n + \mf r_n - \mf A}\;\cdot
\end{equation*}
By symmetry, for each $\eta\in \mc N(\eta^{\mb x})$, we can represent
$\bb M(\eta,\cdot)$ in terms of $\bb M_1$ and $\bb M_2$.  Moreover,
for all $\eta\in \mc N(\eta^{\mb x})$,
\begin{equation*}
\bb M(\eta, \eta^{\mb x}) \;+\; \sum_{0\le i,j\le 3} \bb M(\eta, \mc
E^{i,j}_{\mb x}) \;=\; 1\; .
\end{equation*}

\subsection{The valleys $\mc E^{i,j}_{\mb x}$}
\label{ss1}

Let $Q^i \;=\; Q \setminus \{\mb w_i \}$, $0\le i\le 3$, where
\begin{equation*}
\mb w_0 = \mb w = (0,0)\;, \quad \mb w_1 =
(n-1,0)\; , \quad \mb w_{2} = (n-1,n-1)\;,  \quad \mb w_3 = (0,n-1) 
\end{equation*}
are the corners of the square $Q$. For $\mb x\in\Lambda_L$, let
$Q_{\mb x}^i = \mb x + Q^i$, $\mb x_i = \mb x + \mb w_i$.

Denote by $\partial_j Q_{\mb x}^i$, $0\le j\le 3$, the $j$-th boundary
of $Q_{\mb x}^i$:
\begin{equation*}
\begin{split}
& \partial_j Q_{\mb x}^{i} =\{\mb z\in \partial_+ Q_{\mb x}^i : 
\exists\, \mb y\in Q_{\mb x}^{i} \,;\, \mb y -\mb z = (1-j) e_2\} 
\quad j=0,2 \; , \\
& \quad \partial_j Q_{\mb x}^{i} =\{\mb z\in \partial_+ Q_{\mb x}^i : 
\exists\, \mb y\in Q_{\mb x}^{i} \,;\, \mb y -\mb z = (j-2) e_1\} 
\quad j=1,3\;.
\end{split}
\end{equation*}
Let $Q^{i,j}_{\mb x} = \partial_j Q_{\mb x}^{i} \setminus Q_{\mb x}$,
let $\mc E^{i,j}_{\mb x}$ be the set of configurations in which all
sites of the set $Q^{i}_{\mb x}$ are occupied with an extra particle
at some location of $Q^{i,j}_{\mb x}$, and let $\Omega^1 =
\Omega^1_{L,K}$ be the union of all such sets:
\begin{equation*}
\mc E^{i,j}_{\mb x} \;=\; \{\sigma^{\mb x_i, \mb z} \eta^{\mb x} : 
\mb z\in Q^{i,j}_{\mb x}\}\;, \quad
\Omega^1_{\mb x} \;=\; \bigcup_{0\le i,j\le 3} \mc E^{i,j}_{\mb x}\;, \quad 
\Omega^1 \;=\; \bigcup_{\mb x\in\Lambda_L} \Omega^1_{\mb x}\;.
\end{equation*}
Note that $\Omega_1\subset \bb H_1$. 

\begin{figure}[!h]
\begin{center}
\includegraphics[scale =0.5]{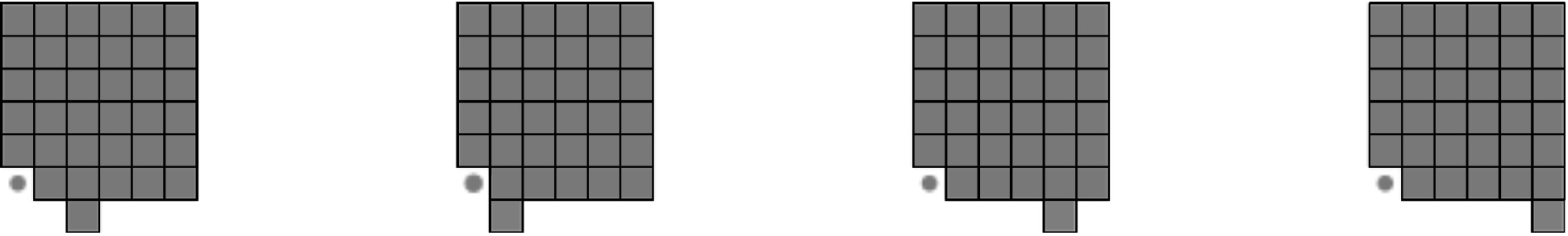}
\end{center}
\caption{Four among the five configurations of the set $\mc
  E^{0,0}_{\mb x}$ for $n=6$. The gray dot indicates the site $\mb
  x$. We placed a square $[-1/2,1/2)^2$ around each particle.}
\label{fig2a}
\end{figure} 

The process $\{\eta^\beta_t : t\ge 0\}$ can reach any configuration
$\xi\in \mc E^{i,j}_{\mb x}$ from any configuration $\eta\in \mc
E^{i,j}_{\mb x}$ with rate one jumps, while any jump from a
configuration in $\mc E^{i,j}_{\mb x}$ to a configuration which does
not belong to this set has rate at most $e^{-\beta}$. This means that
at low temperatures the process $\eta_t$ reaches equilibrium in $\mc
E^{i,j}_{\mb x}$ before exiting this set, which is the first condition
for a set to be the well of a valley.

The main result of this subsection states that for any configuration
$\xi\in \mc E^{i,j}_{\mb x}$, the triples $(\mc E^{i,j}_{\mb x}, \mc
E^{i,j}_{\mb x}\cup \Delta_1, \xi)$ are valleys in the terminology of
\cite{bl2}. This means, in particular, that starting from any
configuration in $\mc E^{i,j}_{\mb x}$, the hitting time of the set
$\bb H_{01}\setminus\mc E^{i,j}_{\mb x}$ properly rescaled converges
in distribution, as $\beta\uparrow\infty$, to an exponential random
variable. We compute in Proposition \ref{s09} the time scale which
turns the limit a mean one exponential distribution, as well as the
asymptotic distribution of $\eta(H(\bb H_{01}\setminus\mc E^{i,j}_{\mb
  x}))$.

Denote by $\mc N(\mc E^{i,j}_{\mb x})$, $\mc N$ for neighborhood, the
configurations which do not belong to $\mc E^{i,j}_{\mb x}$, but which
can be reached from a configuration in $\mc E^{i,j}_{\mb x}$ by
performing a jump which has rate $e^{-\beta}$. The set $\mc N(\mc
E^{2,2}_{\mb w})$, for instance, has the following $3n$
elements. There are $n+1$ configurations obtained when the top
particle detaches itself from the others: $\sigma^{\mb w_{2}, \mb z}
\eta^{\mb w}$, where $\mb z=(-1,n)$, $(a,n+1)$, $0\le a\le n-2$,
$(n-1,n)$. There are $n-1$ configurations obtained when the particle
at $\mb w_2-e_2$ moves upward: $\sigma^{\mb w_2 - e_2, \mb z}
\eta^{\mb w}$, $\mb z = (a,n)$, $0\le a\le n-2$. There are $n-2$
configurations obtained when the particle at $\mb w_2-e_1$ moves to
the right: $\sigma^{\mb w_2 - e_1, \mb z} \eta^{\mb w}$, $\mb z =
(a,n)$, $0\le a\le n-3$. To complete the description of the set $\mc
N(\mc F^{2,2}_{\mb w})$, we have to add the configurations
$\sigma^{\mb w_3, \mb w_3+e_2}\sigma^{\mb w_2, \mb w_3 +e_1+e_2}
\eta^{\mb w}$ and $\sigma^{\mb w_2 - e_1, \mb w_2 -e_1 +e_2}
\sigma^{\mb w_2, \mb w_2-2e_1+e_2} \eta^{\mb w}$.

\begin{lemma}
\label{s10}
For $\mb x\in\Lambda_L$, $0\le i,j\le 3$, and $\xi\in \mc N(\mc
E^{i,j}_{\mb x})$, there exists a probability measure $\bb M(\xi,
\,\cdot\,)$ defined on $\bb H_{01}$ such that
\begin{equation*}
\lim_{\beta\to\infty} \mb P^\beta_{\xi} 
\big[ \eta(H_{\bb H_{01}}) \in \mc A \big] \;=\; \bb M (\xi,
\mc A)
\end{equation*}
for all $\mc A\subset\bb H_{01}$.  Moreover, let $\Pi$ be one of the
sets $\mc E^{i,j}_{\mb w}$, $0\le i,j\le 3$, or one of the singletons
$\{\eta^{\mb w}\}$, $\{\sigma^{\mb w_2, \mb w_3 +e_1+e_2} \sigma^{\mb
  w_0, \mb w_3 +e_2} \eta^{\mb w}\}$. Then,
\begin{enumerate}
\item For $\mb z\in J_1= \{(-1,n), (n-1,n), (a,n+1) : 0\le a\le
  n-2\}$,
\begin{equation*}
\begin{split}
\bb M (\sigma^{\mb w_{2}, \mb z} \eta^{\mb w}, \Pi)\;&=\; 
\sum_{k=0}^3 \mf p(\mb z, Q^{2,k}_{\mb w}) \, 
\mb 1\{\Pi = \mc E^{2,k}_{\mb w}\} \\
& +\; \mf p(\mb z, \mb w_2 + e_2)\, \bb M_1(\Pi) \, 
\;+\; \mf p(\mb z, \mb w_2 + e_1)\, \bb M_2(\Pi)\;.
\end{split}
\end{equation*}
\item For $\mb z \in Q^{2,2}_{\mb w}$,
\begin{equation*}
\begin{split}
\bb M (\sigma^{\mb w_2 - e_2, \mb z} \eta^{\mb w}, \Pi)\; &=\;
\frac{1}{n-1} \, \mb 1\{\Pi = \mc E^{1,2}_{\mb w} \} \;+\;
\mf r^{0}_{n}(\mf n_{\mb z}) \, \bb M_1(\Pi) \\
\; &+\; \Big \{\frac{n-2}{n-1} - \mf r^{0}_{n}(\mf n_{\mb z}) 
\Big\}\, \mb 1\{\Pi = \mc E^{2,2}_{\mb w} \} \;.
\end{split}
\end{equation*}
where $\mf n_{\mb z}= n-1- z_1$, $\mb z = (z_1, z_2)$. 
\item For $\mb z = (k,n)$, $0\le k\le n-3$,
\begin{equation*}
\bb M (\sigma^{\mb w_2 - e_1, \mb z} \eta^{\mb w}, \mc E^{2,2}_{\mb w})
\;=\; 1 
\end{equation*}
\item Finally, for the last two configurations of $\mc N(\mc
  E^{2,2}_{\mb w})$, 
\begin{equation*}
\begin{split}
\bb M (\sigma^{\mb w_3, \mb w_3+e_2}\sigma^{\mb w_2, \mb w_3
  +e_1+e_2} \eta^{\mb w}, \Pi)\; &=\; 
\frac 1{n} \, \mb 1\big\{ \Pi = \{\sigma^{\mb w_2, \mb w_3 +e_1+e_2}
\sigma^{\mb w_0, \mb w_3 +e_2}\eta^{\mb w}\} \big\} \\
& +\; \frac{n-1}{n} \, \mb 1\{ \Pi = \mc E^{2,2}_{\mb w}\}\;,
\end{split}
\end{equation*}
\begin{equation*}
\bb M (\sigma^{\mb w_2 - e_1, \mb w_2 -e_1
  +e_2} \sigma^{\mb w_2, \mb w_2-2e_1+e_2} \eta^{\mb w}, \mc
E^{2,2}_{\mb w})\;=\; 1\;.
\end{equation*}
\end{enumerate}
\end{lemma}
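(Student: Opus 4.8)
The plan is to mimic the analysis of Lemma \ref{bs01}: for each configuration $\xi$ in $\mc N(\mc E^{i,j}_{\mb x})$, identify which mesoscopic degrees of freedom (a detached particle performing a random walk on $\Lambda_L$, or a particle–hole pair evolving on an interval or on a small square) govern the excursion, and read off $\bb M(\xi,\cdot)$ from the hitting probabilities of the elementary chains of Subsection 4.1. By translation invariance and the dihedral symmetry of the square, it suffices to treat $\mc E^{2,2}_{\mb w}$, and within it the $3n+2$ configurations of $\mc N(\mc E^{2,2}_{\mb w})$ listed just before the lemma, grouped into the four families appearing in items (1)–(4). For each family I would argue that the ``slow'' rate-$e^{-\beta}$ moves are frozen on the time scale of the relevant rate-$1$ dynamics, so that with probability tending to $1$ the process either returns to a single distinguished configuration ($\eta^\star_1$, $\eta^\star_2$, or a configuration in some $\mc E^{i,k}_{\mb w}$) or first hits $\bb H_{01}$ at a predictable location; then the strong Markov property together with Lemma \ref{bs01} (which supplies $\bb M_1$, $\bb M_2$) closes the computation.

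Concretely: in family (1) the top particle detaches and performs the rate-$1$ symmetric random walk $\mb x_t$ on $\Lambda_L$ until it hits $\partial_+ Q_{\mb w}$; at that moment $\eta(t)$ lies in $\mc E^{2,k}_{\mb w}$ if the particle landed on $Q^{2,k}_{\mb w}$, or equals $\eta^\star_1$ (resp. $\eta^\star_2$) if it re-attached at $\mb w_2+e_2$ (resp. $\mb w_2+e_1$), whence the formula in (1) via $\mf p(\mb z,\cdot)$ and Lemma \ref{bs01}. In family (2) the relevant pair is the horizontal position of the attached particle together with the vertical position of the left-side hole, which evolves as the chain $\mb z_t$ on $E_n\cup\{\mf d\}$ with initial point $(\mf n_{\mb z},1)$; the decomposition of $\partial E_n$ into $E^+_n$, $E^-_n$, $\{(0,0)\}$ gives the three alternatives (hit $\mc E^{1,2}_{\mb w}$ with probability $\mf r^+_n=(n-1)^{-1}$, return to $\eta^\star_1$ with probability $\mf r^0_n(\mf n_{\mb z})$, hit $\mc E^{2,2}_{\mb w}$ otherwise), using the already-established identities $\mf r^+_n=(n-1)^{-1}$ and \eqref{b10}. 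Family (3) is the degenerate case: once the $\mb w_2-e_1$ particle moves to $(k,n)$ with $0\le k\le n-3$ the configuration is already in a position from which the only rate-$1$ evolution returns it to $\mc E^{2,2}_{\mb w}$ (the extra particle simply slides back along the top row), hence $\bb M(\cdot,\mc E^{2,2}_{\mb w})=1$; family (4) is handled by the same slide-back argument, the factor $1/n$ in the first formula coming from a single rate-$1$ choice among $n$ equally likely moves. Assembling the cases proves the lemma.

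The existence of the limit $\bb M(\xi,\cdot)$ for \emph{every} $\xi\in\mc N(\mc E^{i,j}_{\mb x})$ and its being a \emph{probability} measure on $\bb H_{01}$ follows once one checks that the process does not escape to $\Delta_1$ before returning to $\bb H_{01}$: all competing moves have rate $O(e^{-\beta})$ and the rate-$1$ chains above are absorbed (in $\bb H_{01}$ or back at $\eta^\star_j$) in time $O(1)$, so a standard comparison shows the escape probability is $O(e^{-\beta})$ and the mass placed on $\bb H_{01}$ in the limit is $1$.

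The main obstacle I anticipate is the bookkeeping in family (2): one must verify that the joint motion of the ``walking'' particle and the ``walking'' hole is genuinely the chain $\mb z_t$ — in particular that the lengthened holding time at $(1,1)$ (the coincidence configuration where particle and hole are adjacent, producing the auxiliary state $\mf d$) correctly encodes the local geometry — and then match the initial condition $(\mf n_{\mb z},1)$ to the right site $\mb z\in Q^{2,2}_{\mb w}$. Getting the orientation conventions for $\partial_j Q^i_{\mb x}$ and the index $\mf n_{\mb z}=n-1-z_1$ consistent with the definition of $\mf r^0_n(k)$ in \eqref{b03}, and confirming that the remaining probability mass is exactly $\frac{n-2}{n-1}-\mf r^0_n(\mf n_{\mb z})\ge 0$, is the delicate point; everything else is a transcription of the argument already given for Lemma \ref{bs01}.
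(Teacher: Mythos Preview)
Your strategy is exactly the paper's: reduce by symmetry to $\mc E^{2,2}_{\mb w}$, and for each of the five families in $\mc N(\mc E^{2,2}_{\mb w})$ identify the rate-$1$ subdynamics (a free random walk on $\Lambda_L$, the chain $\mb z_t$, the chain $\mb y_t$, or a one-dimensional gambler's ruin), then read off $\bb M(\xi,\cdot)$ from the corresponding hitting probabilities, closing via Lemma~\ref{bs01} when the excursion lands on $\eta^\star_1$ or $\eta^\star_2$. Two places where your sketch is looser than the actual argument: in family~(3) the configuration $\sigma^{\mb w_2-e_1,\mb z}\eta^{\mb w}$ does \emph{not} simply ``slide back'' --- the hole at $\mb w_2-e_1$ and the top particle execute the coupled walk $\mb y_t$ of \eqref{b02}, and the conclusion $\bb M(\cdot,\mc E^{2,2}_{\mb w})=1$ comes from the fact that the hole, confined to $\{m+1,\dots,n-1\}$, is eventually absorbed at $n-1$; and in family~(4) the factor $1/n$ is not ``a single rate-$1$ choice among $n$ equally likely moves'' but the gambler's-ruin probability that the hole at $\mb w_3$, walking vertically on $\{(0,b):0\le b\le n\}$ from $b=n-1$, reaches $\mb w_0$ before $(0,n)$. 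Also a minor orientation slip: in family~(2) the hole created by removing the particle at $\mb w_2-e_2=(n-1,n-2)$ sits on the \emph{right} column, not the left. None of these affects the correctness of the scheme or the final formulas.
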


\begin{proof}
We present the proof for $i=j=2$, $\mb x=\mb w$, the other cases being
analogous.  As we have seen, the set $\mc N(\mc E^{2,2}_{\mb w})$ has five
different types of configurations. We examine each one separately.
Assume first that $\xi = \sigma^{\mb w_{2}, \mb z} \eta^{\mb w}$ for
some $\mb z\in J_1$, $J_1$ the set defined in the statement of the
lemma.  The free particle, initially at $\mb z$ performs a rate one,
nearest-neighbor, symmetric random walk in $\Lambda_L$ until it
reaches the outer boundary of the square $Q$. All the other possible
jumps have rate at most $e^{-\beta}$ and may therefore be neglected in
the argument.  When the free particle attains $\partial_+ Q$, the
configuration either belongs to one of the sets $\mc E^{2,k}$, $0\le
k\le 3$, or is one of the configurations $\eta^\star_1$ or $\eta^\star_2$
introduced in Lemma \ref{bs01}. By definition \eqref{b09} of $\mf p$,
it belongs to $\mc E^{2,k}_{\mb w}$ with probability $\mf p(\mb z,
Q^{2,k}_{\mb w})$ and is equal to the configuration $\eta^\star_1$
(resp. $\eta^\star_2$) with probability $\mf p(\mb z, \mb w_2 + e_2)$
(resp. $\mf p(\mb z, \mb w_2 + e_1)$). This proves the first assertion
of the lemma.\smallskip

Assume now that $\xi = \sigma^{\mb w_2 - e_2, \mb z} \eta^{\mb w}$,
$\mb z\in Q^{2,2}_{\mb w}$.  The configuration $\xi$ has a particle
attached to the top side of the square $Q$ and a hole on the right
side of the square. This pair behaves as the process $\mb z_t$
introduced in the begining of this section and evolves until the hole
reaches the bottom of the square or its original position at the top.
There are three cases to be considered. The hole may reach $\mb w_1$
before $\mb w_2$. This happens with probability $(n-1)^{-1}$ and the
configuration attained belongs to the equivalent class $\mc
E^{1,2}_{\mb w}$.

The hole may reach $\mb w_2$ before $\mb w_1$ when the top particle is
not at $\mb w_2+e_2$. In this case the process reached a configuration
in $\mc E^{2,2}_{\mb w}$. This event has probability $\bb P^{\mb
  z}_{(j,1)} [ H_{\partial E_n} = H_{E^-_N}]$, where $j=n-1-\mb z
\cdot e_1$ and where $\bb P^{\mb z}_{(j,1)}$ is the measure introduced
in \eqref{b03}. By definition of the set $\partial E_n$,
\begin{equation*}
\begin{split}
\bb P^{\mb z}_{(j,1)} \big[ H_{\partial E_n} = H_{E^-_N} \big] \; &=\;
1 \;-\; \bb P^{\mb z}_{(j,1)} \big[ H_{\partial E_n} = H_{E^+_N} \big]
\;-\; \bb P^{\mb z}_{(j,1)} \big[ H_{\partial E_n} = H_{(0,0)} \big] \\
\;& =\; \frac{n-2}{n-1} \;-\; \mf r^{0}_{n}(j)\;.
\end{split}
\end{equation*}

Finally, the hole may reach $\mb w_2$ before $\mb w_1$ at a time where
the top particle is at $\mb w_2+e_2$. In this case the process reached
the configuration $\eta^\star_1$ introduced in the previous lemma. This
event happens with probability $\mf r^{0}_{n}(j)$, which concludes the
proof of the second assertion of the lemma.
\smallskip

In the case $\xi = \sigma^{\mb w_2 - e_1, \mb z} \eta^{\mb w}$, the
hole initially at $\mb w_2-e_1$ performs a horizontal, rate one,
symmetric random walk on the interval $\{m+1, \dots, n-1\}$, where $m$
represents the horizontal position of the top particle, which itself
performs a horizontal, rate one, symmetric random walk limited on its
right by the hole in the row below. This coupled system evolves as the
process $\mb y_t$ introduced in \eqref{b02} until the hole initially
at $\mb w_2-e_1$ reaches its original position at $\mb
w_2$. \smallskip

Suppose that $\xi = \sigma^{\mb w_3, \mb w_3+e_2}\sigma^{\mb w_2, \mb
  w_3 +e_1+e_2} \eta^{\mb w}$. In this situation the hole at $\mb w_3$
performs a vertical, rate one, symmetric random walk on $\{(0,b) :
0\le b\le n\}$. The hole reaches $\mb w$ before it reaches $\mb w_3$
with probability $n^{-1}$.  \smallskip

Finally, if $\xi = \sigma^{\mb w_2 - e_1, \mb w_2 -e_1 +e_2}
\sigma^{\mb w_2, \mb w_2-2e_1+e_2} \eta^{\mb w}$, there is only one
rate one jump which drives the system back to the set $\mc
E^{2,2}_{\mb w}$.
\end{proof}

By symmetry, the distribution of $\eta(H (\bb H_{01}\setminus\mc
E^{i,\cdot}_{\mb x}))$ can be obtained from the one of $\eta(H (\bb
H_{01}\setminus\mc E^{0,j}_{\mb w}))$, $0\le j\le 3$.  When the set
$\Pi$ is a singleton $\{\zeta\}$ we represent $\bb M (\xi, \{\zeta\})$
by $\bb M(\xi, \zeta)$. This convention is adopted for all functions
of sets without further comment.  Recall the notation introduced in
the beginning of this section and in the statement of Lemma
\ref{s10}. Let
\begin{equation}
\label{b11}
Z(\mc E^{i,j}_{\mb x}) \;=\; \sum_{\xi\in \mc N(\mc E^{i,j}_{\mb x})}
\bb M (\xi, (\mc E^{i,j}_{\mb x})^c) 
\;=\; \sum_{\xi\in \mc N(\mc E^{i,j}_{\mb x})} 
\{ 1 - \bb M (\xi, \mc E^{i,j}_{\mb x})\} \;.
\end{equation}
Note that $Z(\mc E^{i,j}_{\mb x})$ does not depend on $\mb x$. In this
sum, the terms $\bb M (\xi, \,\cdot\,)$ are not multiplied by weights
$\omega (\xi)$ because asymptotically the process hits $\mc N(\mc
E^{i,j}_{\mb x})$ according to a uniform distribution. In view of the
previous lemma and by \eqref{b10},
\begin{equation*}
\begin{split}
& Z(\mc E^{2,2}_{\mb x}) \;=\; 1\;+\; \frac 1{n-1} \;+\;
(1+\mf r^-_n) \, [1-\bb M_1(\mc E^{2,2}_{\mb w})] \\
&\quad \;+\; 
\sum_{\mb z\in J^\star_1} \Big\{ [1-\mf p(\mb z, Q^{2,2}_{\mb w})]
\,-\, \mf p(\mb z, \mb w_2 + e_2) \bb M_1(\mc E^{2,2}_{\mb w}) \,-\, 
\mf p(\mb z, \mb w_2 + e_1) \bb M_2(\mc E^{2,2}_{\mb w})\Big\}\;,     
\end{split}
\end{equation*} 
where $J^\star_1 = J_1 \setminus\{\mb w_2+e_2\}$.

\begin{proposition}
\label{s09}
Fix $0\le i,j\le 3$ and $\mb x\in\Lambda_L$.
\begin{enumerate}
\item For any $\xi\in \mc E^{i,j}_{\mb x}$, the triple $(\mc
  E^{i,j}_{\mb x}, \mc E^{i,j}_{\mb x}\cup \Delta_1, \xi)$ is a valley
  of depth $\mu_K (\mc E^{i,j}_{\mb x})/$ $\Cap_K(\mc
  E^{i,j}_{\mb x}, [\mc E^{i,j}_{\mb x}\cup \Delta_1]^c)$;

\item For any $\xi\in \mc E^{i,j}_{\mb x}$, under $\mb P^\beta_{\xi}$,
  $H (\bb H_{01}\setminus\mc E^{i,j}_{\mb x})/e^\beta$ converges in
  distribution to an exponential random variable of parameter $Z(\mc
  E^{i,j}_{\mb x})/ |\mc E^{i,j}_{\mb x}|$;

\item For any $\xi\in \mc E^{i,j}_{\mb x}$, $\Pi \subset
  \bb H_{01}\setminus\mc E^{i,j}_{\mb x}$, 
\begin{equation*}
\lim_{\beta\to\infty} \mb P^\beta_{\xi}
\big[ \eta (H (\bb H_{01}\setminus\mc E^{i,j}_{\mb x})) \in \Pi \big]
\;=\; \frac 1{Z(\mc E^{i,j}_{\mb x})} \sum_{\eta\in \mc N(\mc E^{i,j}_{\mb x})} 
\bb M (\eta, \Pi) \;=:\; Q(\mc E^{i,j}_{\mb x}, \Pi)\;. 
\end{equation*}
\end{enumerate}
\end{proposition}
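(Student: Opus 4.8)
\emph{Strategy.} The plan is to prove the well property of $\mc E^{i,j}_{\mb x}$ first, then to obtain the escape time and the escape distribution by an excursion decomposition, and finally to read off the three items. Throughout one uses that $n$ and $L$ are fixed, so that $|\mc E^{i,j}_{\mb x}|$, $|\mc N(\mc E^{i,j}_{\mb x})|$, the constant $Z(\mc E^{i,j}_{\mb x})$, and all expected hitting times of the auxiliary walks $\mb x_t,\mb y_t,\mb z_t,\mb u_t,\mb v_t$ are $\beta$-independent constants, so only powers of $e^{-\beta}$ are asymptotically relevant. I would first show that $\lim_{\beta\to\infty}\max_{\zeta\in\mc E^{i,j}_{\mb x}} \mb P^\beta_\zeta[ H(\bb H_{01}\setminus\mc E^{i,j}_{\mb x}) < H_\xi ] = 0$, the first requirement for $\mc E^{i,j}_{\mb x}$ to be the well of a valley. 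On $\mc E^{i,j}_{\mb x}$ the extra particle performs a rate one nearest-neighbour walk on the segment $Q^{i,j}_{\mb x}$, so from any $\zeta\in\mc E^{i,j}_{\mb x}$ the process visits $\xi$ within a time that is $O(1)$ in $\beta$, whereas every move out of $\mc E^{i,j}_{\mb x}$ has rate at most $e^{-\beta}$, so the first exit time from $\mc E^{i,j}_{\mb x}$ stochastically dominates an exponential variable of parameter $O(e^{-\beta})$; comparing the two bounds the displayed probability by $O(e^{-\beta})$. The same comparison shows that, with probability tending to one, the process visits every configuration of $\mc E^{i,j}_{\mb x}$ before leaving $\bb H_{01}$, and that the configuration through which it first leaves $\mc E^{i,j}_{\mb x}$ is asymptotically uniformly distributed on $\mc N(\mc E^{i,j}_{\mb x})$ --- the latter because the position of the extra particle equilibrates to the uniform law on $Q^{i,j}_{\mb x}$ before the exit and each configuration of $\mc N(\mc E^{i,j}_{\mb x})$ is reached from a single configuration of $\mc E^{i,j}_{\mb x}$ by a rate $e^{-\beta}$ move, so that the total equilibrium exit rate is $e^{-\beta}\,|\mc N(\mc E^{i,j}_{\mb x})|/|\mc E^{i,j}_{\mb x}|$.

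\emph{Items (2) and (3).} Using the strong Markov property I would cut the trajectory up to $H(\bb H_{01}\setminus\mc E^{i,j}_{\mb x})$ into cycles, each consisting of a sojourn in $\mc E^{i,j}_{\mb x}$, then a jump to some $\zeta'\in\mc N(\mc E^{i,j}_{\mb x})$ (asymptotically uniform), then an excursion outside $\bb H_{01}$ which, by Lemma \ref{s10}, returns to $\bb H_{01}$ at a configuration with law $\bb M(\zeta',\,\cdot\,)$; the cycle closes inside $\mc E^{i,j}_{\mb x}$ with asymptotic probability $\bb M(\zeta',\mc E^{i,j}_{\mb x})$, otherwise the process has reached $\bb H_{01}\setminus\mc E^{i,j}_{\mb x}$. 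Hence the number of cycles is asymptotically geometric with success probability $|\mc N(\mc E^{i,j}_{\mb x})|^{-1}\sum_{\zeta'\in\mc N(\mc E^{i,j}_{\mb x})}[1-\bb M(\zeta',\mc E^{i,j}_{\mb x})] = Z(\mc E^{i,j}_{\mb x})/|\mc N(\mc E^{i,j}_{\mb x})|$, and conditioning on the terminal cycle gives that the escape configuration has law $Z(\mc E^{i,j}_{\mb x})^{-1}\sum_{\zeta'\in\mc N(\mc E^{i,j}_{\mb x})}\bb M(\zeta',\Pi)$, which is item (3). For item (2), a sojourn in $\mc E^{i,j}_{\mb x}$ started near equilibrium is asymptotically exponential of parameter equal to the total equilibrium exit rate $e^{-\beta}|\mc N(\mc E^{i,j}_{\mb x})|/|\mc E^{i,j}_{\mb x}|$ (the chain mixes inside $\mc E^{i,j}_{\mb x}$ at rate one, infinitely faster than it exits), while each excursion outside $\bb H_{01}$ lasts an $O(1)$ time --- a hitting time of one of the auxiliary walks --- hence is negligible on the scale $e^{\beta}$. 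Therefore $H(\bb H_{01}\setminus\mc E^{i,j}_{\mb x})$ is, up to a vanishing relative error, a geometric sum of i.i.d. exponentials, which is again exponential, of parameter $(Z(\mc E^{i,j}_{\mb x})/|\mc N(\mc E^{i,j}_{\mb x})|)\cdot(e^{-\beta}|\mc N(\mc E^{i,j}_{\mb x})|/|\mc E^{i,j}_{\mb x}|)=e^{-\beta}Z(\mc E^{i,j}_{\mb x})/|\mc E^{i,j}_{\mb x}|$; rescaling by $e^{\beta}$ gives item (2).

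\emph{Item (1).} Item (2) supplies the exponential-escape requirement in the definition of a valley, the well property was established above, and the remaining requirement --- that the fraction of time spent in $(\mc E^{i,j}_{\mb x}\cup\Delta_1)\setminus\mc E^{i,j}_{\mb x}=\Delta_1$ before the escape be negligible --- holds because that time is a sum of $O(1)$ excursion lengths, each $O(1)$, against a depth of order $e^{\beta}$. Thus $(\mc E^{i,j}_{\mb x},\mc E^{i,j}_{\mb x}\cup\Delta_1,\xi)$ is a valley; by the general identification of the depth of a valley with $\mu_K(\mc W)/\Cap_K(\mc W,\mc B^{c})$ from \cite{bl2}, its depth equals $\mu_K(\mc E^{i,j}_{\mb x})/\Cap_K(\mc E^{i,j}_{\mb x},[\mc E^{i,j}_{\mb x}\cup\Delta_1]^{c})$. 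This is consistent with item (2): one checks that every self-avoiding path from $\mc E^{i,j}_{\mb x}$ to $\bb H_{01}\setminus\mc E^{i,j}_{\mb x}$ must pass through $\bb H_2$ and that the path which detaches the bump, lets it wander, and drives it back into the missing corner achieves maximal energy exactly $\bb H_{\rm min}+2$, whence $G_K(\mc E^{i,j}_{\mb x},[\mc E^{i,j}_{\mb x}\cup\Delta_1]^{c})=e^{-2\beta}\mu_K(\eta^{\mb w})$; combined with $\mu_K(\mc E^{i,j}_{\mb x})=|\mc E^{i,j}_{\mb x}|\,e^{-\beta}\mu_K(\eta^{\mb w})$ and \eqref{17}, this shows $\mu_K(\mc W)/\Cap_K(\mc W,\mc B^{c})$ is of order $e^{\beta}$, as it must be.

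\emph{Main obstacle.} The hard part is making the excursion decomposition rigorous with uniform error control: the process is not exactly equilibrated at the start of each cycle, it does not exit $\mc E^{i,j}_{\mb x}$ exactly uniformly, the cycles are not exactly independent, and the $\epsilon(\beta)$ corrections of Lemmas \ref{bs01} and \ref{s10} must be shown to compound harmlessly over the $O(1)$ cycles. I would organize this by passing to the trace process on $\bb H_{01}$ and invoking the general results of \cite{bl4} (which rest on \eqref{17}), which reduce the statement precisely to the inputs furnished by Lemmas \ref{bs01} and \ref{s10}; the remainder is bookkeeping, made harmless by the fact that $n$ and $L$ are held fixed.
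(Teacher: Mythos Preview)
Your proposal is correct and follows the same conceptual line as the paper: decompose the path to $\bb H_{01}\setminus\mc E^{i,j}_{\mb x}$ into cycles consisting of a sojourn in $\mc E^{i,j}_{\mb x}$, an asymptotically uniform exit to $\mc N(\mc E^{i,j}_{\mb x})$, and an $O(1)$ excursion back to $\bb H_{01}$ governed by Lemma~\ref{s10}; then read off the geometric number of cycles and the exponential sojourn time.

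The paper organizes the details somewhat differently. For item~(1) it does not verify the three valley conditions from scratch but invokes \cite[Theorem~2.6]{bl2} directly: condition~(2.15) holds by definition of $\Delta_1$, and condition~(2.14) is checked via the saddle estimates $G_K(\xi,\zeta)=e^{-\beta}\mu_K(\eta^{\mb w})$ inside $\mc E^{i,j}_{\mb x}$ versus $G_K(\mc E^{i,j}_{\mb x},[\mc E^{i,j}_{\mb x}\cup\Delta_1]^c)\le e^{-2\beta}\mu_K(\eta^{\mb w})$ together with~\eqref{17}. This is quicker than your route, which deduces the valley property after establishing~(2) and~(3). For item~(2), rather than arguing ``geometric sum of i.i.d.\ exponentials'', the paper writes the recursion for $H(\bb H_{01}\setminus\mc E^{i,j}_{\mb x})$ explicitly, takes Laplace transforms, and solves the resulting functional equation; the joint convergence of $(e^{-\beta}\tau_1,\eta_{\tau_1})$ to an independent pair (exponential, uniform on $\mc N$) is obtained not by a direct mixing argument but by coupling with a reflected process $\hat\eta^\beta_t$ on $\mc E^{i,j}_{\mb x}\cup\mc N(\mc E^{i,j}_{\mb x})$ and appealing to \cite[Theorem~2.7]{bl2}. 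This coupling cleanly handles the ``main obstacle'' you identify. Item~(3) is done in the paper exactly as you sketch, via successive returns to $\bb H_{01}$ and the strong Markov property. Both approaches land on the same computation; the paper's chief economy is its repeated leverage of the black boxes from \cite{bl2} in place of reproving the valley axioms and the exponential limit by hand.
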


\begin{proof}
Recall \cite[Theorem 2.6]{bl2}. Condition (2.15) is fulfilled by
definition of the set $\Delta_1$. A simple argument shows that
$G_K(\xi, \zeta) = e^{-\beta} \mu_K(\eta^{\mb w})$ for any
pair of configurations $\xi\not = \zeta\in \mc E^{i,j}_{\mb x}$, and
that $G_K(\mc E^{i,j}_{\mb x}, [\mc E^{i,j}_{\mb x}\cup
\Delta_1]^c) \le e^{-2\beta} \mu_K(\eta^{\mb w})$. Condition
(2.14) follows from these estimates and \eqref{17}. This
proves the first assertion of the lemma.

To prove the second assertion of the lemma, we start with a recursive
formula for $H_{\bb H_{01} \setminus \mc E^{i,j}_{\mb x}}$. Let $\tau_1$
the time the process leaves the set $\mc E^{i,j}_{\mb x}$: $\tau_1 =
\inf\{t>0 : \eta^\beta_t \not \in \mc E^{i,j}_{\mb x}\}$. We have that
\begin{equation*}
H_{\bb H_{01} \setminus \mc E^{i,j}_{\mb x}} \; =\; \tau_1 \;+\; 
H_{\bb H_{01}} \circ \theta_{\tau_1} \;+\;
\mb 1\{ H_{\bb H_{01}} \circ \theta_{\tau_1} = H_{\mc E^{i,j}_{\mb x}} 
\circ \theta_{\tau_1} \} \, 
H_{\bb H_{01} \setminus \mc E^{i,j}_{\mb x}} \circ \theta_{H^+_{\bb H_{01}}} \;,
\end{equation*}
where $\{\theta_t : t\ge 0\}$ stands for the shift operators.

Fix $\lambda>0$ and let $\lambda_\beta = \lambda e^{-\beta}$. By the
strong Markov property, for any $\xi\in \mc E^{i,j}_{\mb x}$,
\begin{eqnarray}
\label{08}
&& \mb E^\beta_{\xi} \Big[ e^{-\lambda_\beta H_{\bb H_{01}
    \setminus \mc E^{i,j}_{\mb x}}}\Big] \; =\;
\mb E^\beta_{\xi} \Big[ e^{-\lambda_\beta \,\tau_1}\,
\mb E^\beta_{\eta_{\tau_1}} 
\Big[ \mb 1\{ H_{\bb H_{01}} \not = H_{\mc E^{i,j}_{\mb x}} \} \,
e^{-\lambda_\beta H_{\bb H_{01}}} \Big]\, \Big] \\
&& \quad +\;
\mb E^\beta_{\xi} \Big[ e^{-\lambda_\beta \,\tau_1}\,
\mb E^\beta_{\eta_{\tau_1}} 
\Big[ \mb 1\{ H_{\bb H_{01}} = H_{\mc E^{i,j}_{\mb x}} \} \,
e^{-\lambda_\beta H_{\bb H_{01}}}\, \exp\big \{-\lambda_\beta H_{\bb H_{01} 
\setminus \mc E^{i,j}_{\mb x}} \circ \theta_{H_{\bb H_{01}}} \big\} 
\Big]\, \Big]  \;. \nonumber
\end{eqnarray}

Recall the definition of $\mc N(\mc E^{i,j}_{\mb x})$ given just
before the statement of Lemma \ref{s10}.  With a probability which
converges to $1$ as $\beta\uparrow\infty$, $\eta^\beta_{\tau_1}$
belongs to $\mc N(\mc E^{i,j}_{\mb x})$. Each configuration in $\mc
N(\mc E^{i,j}_{\mb x})$ belongs to an equivalent class which
eventually attains $\bb H_{01}$ after a finite random number of rate
one jumps.  This proves that
\begin{equation*}
\lim_{A\to\infty} \lim_{\beta\to\infty} \max_{\zeta \in \mc N(\mc E^{i,j}_{\mb x})} 
\mb P^\beta_{\zeta} \big[ H_{\bb H_{01}} > A \big] \;=\; 0\;.
\end{equation*}
Therefore, we may replace in \eqref{08} $\exp\{-\lambda_\beta
H_{\bb H_{01}}\}$ by $1$ at a cost which vanishes as
$\beta\uparrow\infty$.

By the strong Markov property, after the last replacement, the second
term on the right hand side of \eqref{08} can be rewritten as
\begin{equation*}
\mb E^\beta_{\xi} \Big[ e^{-\lambda_\beta \,\tau_1}\,
\mb E^\beta_{\eta_{\tau_1}} 
\Big[ \mb 1\{ H_{\bb H_{01}} = H_{\mc E^{i,j}_{\mb x}} \} \,
\mb E^\beta_{\eta_{H_{\bb H_{01}}}} \big[ 
\exp\big \{-\lambda_\beta H_{\bb H_{01} 
\setminus \mc E^{i,j}_{\mb x}}  \big\} \big]\,
\Big]\, \Big] \;.
\end{equation*}
Since $\mc E^{i,j}_{\mb x}$ is an equivalent class and the process
leaves $\mc E^{i,j}_{\mb x}$ only after a rate $e^{-\beta}$ jump, a
simple coupling argument shows that
\begin{equation*}
\lim_{\beta\to\infty} \max_{\eta ,\zeta \in \mc E^{i,j}_{\mb x}}
\Big| \, \mb E^\beta_{\eta} \big[ \exp\big \{-\lambda_\beta H_{\bb H_{01} 
\setminus \mc E^{i,j}_{\mb x}}  \big\} \big] \;-\;
\mb E^\beta_{\zeta} \big[ \exp\big \{-\lambda_\beta H_{\bb H_{01} 
\setminus \mc E^{i,j}_{\mb x}}  \big\} \big]  \, \Big|\; =\; 0\;. 
\end{equation*}
The previous expectation is thus equal to
\begin{equation*}
\mb E^\beta_{\xi} \big[ 
\exp\big \{-\lambda_\beta H_{\bb H_{01} 
\setminus \mc E^{i,j}_{\mb x}}  \big\} \big]\,
\mb E^\beta_{\xi} \Big[ e^{-\lambda_\beta \,\tau_1}\,
\mb E^\beta_{\eta_{\tau_1}} 
\big[ \mb 1\{ H_{\bb H_{01}} = H_{\mc E^{i,j}_{\mb x}} \} \,
\big]\, \Big] 
\end{equation*}
plus an error which vanishes as $\beta\uparrow\infty$.

We claim that $(e^{-\beta} \tau_1, \eta^\beta_{\tau_1})$ converges in
distribution, as $\beta\uparrow\infty$, to a pair of independent
random variables where the first coordinate is an exponential time and
the second coordinate has a distribution concentrated on $\mc N(\mc
E^{i,j}_{\mb x})$. The proof of this claim relies on \cite[Theorem
2.7]{bl2} and on a coupling argument.

Let $G^{i,j}_{\mb x} = \mc N(\mc E^{i,j}_{\mb x}) \cup \mc
E^{i,j}_{\mb x}$. Consider the Markov process $\{\hat \eta^\beta_t:
t\ge 0\}$ on $G^{i,j}_{\mb x}$ whose jump rates $\hat r(\eta,\xi)$ are
given by
\begin{equation*}
\hat r(\eta,\xi) \;=\; 
\begin{cases}
  r(\eta,\xi) & \text{if $\eta\in \mc E^{i,j}_{\mb x}$, 
$\xi\in G^{i,j}_{\mb x}$}\;, \\
  r(\xi,\eta) & \text{if $\eta\in \mc N(\mc E^{i,j}_{\mb x})$, $\xi\in \mc
    E^{i,j}_{\mb x}$} \;, \\
0 & \text{otherwise} \;.
\end{cases}
\end{equation*}
Note that $\hat r(\eta,\xi) = e^{-\beta}$ or $0$ if $\eta\in \mc N(\mc
E^{i,j}_{\mb x})$, and that we may couple the processes $\eta^\beta_t$
and $\hat \eta^\beta_t$ in such a way that the probability of the
event $\{\eta^\beta_t = \hat \eta^\beta_t : 0\le t\le \tau_1\}$
converges to one as $\beta\uparrow\infty$ if the initial state belongs
to $\mc E^{i,j}_{\mb x}$.

Let $\{\xi^1, \dots, \xi^m\}$ be an enumeration of the set $\mc N(\mc
E^{i,j}_{\mb x})$ and consider the partition $\mc E^{i,j}_{\mb x} \cup
\{\xi^1\} \cup \dots \cup\{\xi^m\}$ of the set $G^{i,j}_{\mb
  x}$. Assumption (H1) of \cite[Theorem 2.7]{bl2} for the process
$\hat \eta^\beta_t$ is empty for the sets $\{\xi^j\}$ and has been
checked in the first part of this proof for the set $\mc E^{i,j}_{\mb
  x}$. Assumption (H0) for the process $\hat \eta^\beta_t$ speeded up
by $e^\beta$ can be verified by a direct computation. Therefore, by
\cite[Theorem 2.7]{bl2}, the pair $(e^{-\beta} \hat\tau_1, \hat
\eta^\beta_{\tau_1})$ converges in distribution, as
$\beta\uparrow\infty$, to a pair of independent random variables in
which the first coordinate has an exponential distribution and the
second one is concentrated over $\mc N(\mc E^{i,j}_{\mb x})$.  This
result can be extended to the original pair $(e^{-\beta} \tau_1,
\eta^\beta_{\tau_1})$ by the coupling argument alluded to above.

It follows from the claim just proved and the previous estimates that
\begin{equation*}
\lim_{\beta\to\infty} 
\mb E^\beta_{\xi} \Big[ e^{-\lambda_\beta H_{\bb H_{01}
    \setminus \mc E^{i,j}_{\mb x}}}\Big] \; =\;
\lim_{\beta\to\infty} 
\frac{\mb E^\beta_{\xi} 
\big[ e^{-\lambda_\beta \,\tau_1}\,\big] \,
\mb E^\beta_{\xi} \Big[
\mb P^\beta_{\eta_{\tau_1}} 
\big[ H_{\bb H_{01}} \not = H_{\mc E^{i,j}_{\mb x}} \big]\, \Big]}
{1- \mb E^\beta_{\xi} 
\big[ e^{-\lambda_\beta \,\tau_1}\,\big] \,
\mb E^\beta_{\xi} \Big[ \mb P^\beta_{\eta_{\tau_1}} 
\big[ H_{\bb H_{01}} = H_{\mc E^{i,j}_{\mb x}} \big]\, \Big]}\;\cdot 
\end{equation*}
If $\tau_1/e^\beta$ converges to an exponential random variable of
parameter $\theta$, the right hand side becomes
\begin{equation*}
\lim_{\beta\to\infty} 
\frac{ \theta \, \mb E^\beta_{\xi} \Big[ \mb P^\beta_{\eta_{\tau_1}} 
\big[ H_{\bb H_{01}} \not = H_{\mc E^{i,j}_{\mb x}} \big]\, \Big]}
{\lambda + \theta\, 
\mb E^\beta_{\xi} \Big[ \mb P^\beta_{\eta_{\tau_1}} 
\big[ H_{\bb H_{01}} \not = H_{\mc E^{i,j}_{\mb x}} \big]\, \Big]}\;,
\end{equation*}
which means that $H(\bb H_{01} \setminus \mc E^{i,j}_{\mb x})/e^\beta$
converges to an exponential random variable of parameter 
\begin{equation*}
\gamma \;=\; \theta \, \lim_{\beta\to\infty} 
\mb E^\beta_{\xi} \Big[ \mb P^\beta_{\eta_{\tau_1}} 
\big[ H_{\bb H_{01}} \not = H_{\mc E^{i,j}_{\mb x}} \big]\, \Big]\;.
\end{equation*}

We examine the case $i=j=2$, $\mb x =\mb w$.  Recall the description
of the set $\mc N(\mc E^{2,2}_{\mb w})$ presented before Lemma
\ref{s10}.  By computing the average rates which appear in assumption
(H0) of \cite{bl2}, we obtain that under $\mb P^\beta_{\xi}$,
$\tau_1/e^\beta$ converges in distribution to an exponential random
variable of parameter $|\mc N(\mc E^{2,2}_{\mb w})|/|\mc E^{2,2}_{\mb
  w}| = 3n/(n-1)$, and that $\eta^\beta_{\tau_1}$ converges to a
uniform distribution on $\mc N(\mc E^{2,2}_{\mb w})$. Hence, by the
conclusions of the previous paragraph and by Lemma \ref{s10}, $H(\bb
H_{01} \setminus \mc E^{2,2}_{\mb w})/e^\beta$ converges to an
exponential random variable of parameter $Z(\mc E^{2,2}_{\mb w})/ |\mc
E^{2,2}_{\mb w}|$.  This proves the second assertion of the
proposition.

We turn to the third assertion. Denote by $\{H_j : j\ge 1\}$ the
successive return times to $\bb H_{01}$:
\begin{equation*}
H_1 = H^+(\bb H_{01})\;, \quad H_{j+1} = H^+(\bb H_{01}) \circ \theta_{H_j}\;,
\quad j\ge 1\;.
\end{equation*}
With this notation, we may write for every $\xi\in \mc E^{2,2}_{\mb w}$,
\begin{equation}
\label{09}
\mb P^\beta_{\xi} \big[ \eta(H_{\bb H_{01}\setminus \mc E^{2,2}_{\mb w}}) 
\in \Pi \big] \; =\; \sum_{j\ge 1} \mb P^\beta_{\xi} 
\big[ \eta(H_k) \in \mc E^{2,2}_{\mb w} \;, 1\le k\le j-1\;,
\eta(H_j) \in \Pi \big]\;.
\end{equation}

By the strong Markov property, for any $\xi'\in \mc E^{2,2}_{\mb w}$,
$\Pi'\subset\bb H_{01}$,
\begin{equation*}
\mb P^\beta_{\xi'} \big[ \eta(H_1) \in \Pi' \big] \; =\;
\mb E^\beta_{\xi'} \Big[ \mb P^\beta_{\eta_{\tau_1}} 
\big[\eta(H_{\bb H_{01}}) \in \Pi'\big]\, \Big]\;.
\end{equation*}
Under $\mb P^\beta_{\xi}$, the distribution of $\eta_{\tau_1}$
converges to the uniform distribution over $\mc N(\mc E^{2,2}_{\mb
  w})$ as $\beta\uparrow\infty$. Hence, by Lemma \ref{s10},
\begin{equation*}
\lim_{\beta\to\infty} \mb P^\beta_{\xi'} \big[ 
\eta(H_{1}) \in \Pi' \big] \;=\; \frac 1{|\mc N(\mc E^{2,2}_{\mb w})|} 
\sum_{\eta\in \mc N(\mc E^{2,2}_{\mb w})} \bb M (\eta, \Pi')
\end{equation*}
for all $\xi'\in \mc E^{2,2}_{\mb w}$, $\Pi'\subset\bb H_{01}$. Denote the
right hand side of the previous formula by $q(\Pi')$. It follows from
identity \eqref{09}, the strong Markov property and the previous
observation that for all $\Pi\subset \bb H_{01}\setminus \mc E^{2,2}_{\mb
  w}$,
\begin{equation*}
\lim_{\beta\to\infty} \mb P^\beta_{\xi} \big[ \eta(H_{\bb H_{01}\setminus
  \mc E^{2,2}_{\mb w}}) \in \Pi \big] \;=\; \frac{q(\Pi)}
{1-q(\mc E^{2,2}_{\mb w})}\;,
\end{equation*}
which concludes the proof of the proposition.
\end{proof}

Since $\bb H_{01}\setminus \mc E^{i,j}_{\mb x} = [\mc E^{i,j}_{\mb x}
\cup \Delta_1]^c$, it follows from the second assertion of the
proposition that the depth of the valley $(\mc E^{i,j}_{\mb x}, \mc
E^{i,j}_{\mb x}\cup \Delta_1, \xi)$ is $e^\beta|\mc E^{i,j}_{\mb
  x}|/Z(\mc E^{i,j}_{\mb x})$. In particular,
\begin{equation}
\label{19}
\lim_{\beta\to\infty} \frac{\mu_K (\mc E^{i,j}_{\mb
   x})}{e^\beta\, \Cap_K(\mc E^{i,j}_{\mb x}, [\mc E^{i,j}_{\mb x}\cup
 \Delta_1]^c)} \;=\; \frac {|\mc E^{i,j}_{\mb x}|}
{Z(\mc E^{i,j}_{\mb x})}\;\cdot
\end{equation}
Since $\mu_K (\mc E^{i,j}_{\mb x}) = |\mc E^{i,j}_{\mb x}|\,
e^{-\beta}\, \mu_K(\eta^{\mb w})$, 
\begin{equation*}
\lim_{\beta\to\infty} \frac
{\Cap_K(\mc E^{i,j}_{\mb x}, [\mc E^{i,j}_{\mb x}\cup
 \Delta_1]^c)} {e^{-2\beta}\, \mu_K(\eta^{\mb w})}
\;=\; Z(\mc E^{i,j}_{\mb x})\;.
\end{equation*}

In view of Lemma \ref{s10}, we have the following explicit formula for
the probability measure $Q(\mc E^{2,2}_{\mb w}, \,\cdot\,)$ on $\bb
H_{01}$. Let 
\begin{equation}
\label{b12}
\mb R(\mc E^{i,j}_{\mb x}, \Pi) \;=\;
Z(\mc E^{i,j}_{\mb x})\, Q(\mc E^{i,j}_{\mb x}, \Pi) \;=\; 
\sum_{\eta\in \mc N(\mc E^{i,j}_{\mb x})} \bb M (\eta, \Pi) \;, \quad
\Pi\subset \bb H_{01}\;. 
\end{equation}
Recall the definition of the set $J^\star_1$ introduced one equation below
\eqref{b11}. Then,
\begin{equation*}
\begin{split}
\mb R (\mc E^{2,2}_{\mb w}, \eta^{\mb w}) 
\;& =\; (1+\mf r^-_n) \, \bb M_1(\eta^{\mb w}) \\
& +\; \sum_{\mb z\in J^\star_1} \Big\{ \mf p(\mb z, \mb w_2 + e_2) 
\, \bb M_1(\eta^{\mb w}) \,+\, \mf p(\mb z, \mb w_2 + e_1) 
\bb M_2(\eta^{\mb w})\Big\}\;,
\end{split}
\end{equation*}
for $0\le i,j\le 3$, $(i,j)\not = (2,2)$;
\begin{equation*}
\begin{split}
\mb R(\mc E^{2,2}_{\mb w}, \mc E^{i,j}_{\mb w}) \;& =\;
(1+\mf r^-_n) \, \bb M_1(\mc E^{i,j}_{\mb w})
\;+\; \mb 1\{\mc E^{i,j}_{\mb w}=\mc E^{1,2}_{\mb w}\} 
\;+\; \sum_{\mb z\in J^\star_1} \mf p(\mb z, \mb w_2 + e_2) 
\, \bb M_1(\mc E^{i,j}_{\mb w})\\
& +\; \sum_{\mb z\in J^\star_1} \Big\{ \mf p(\mb z, Q^{2,j}_{\mb w}) \, \mb
1\{\Pi=\mc E^{2,j}_{\mb w}\} \,+\, 
\mf p(\mb z, \mb w_2 + e_1) \bb M_2(\mc E^{i,j}_{\mb w})\Big\}\;;
\end{split}
\end{equation*}
and 
\begin{equation*}
\mb R (\mc E^{2,2}_{\mb w}, 
\sigma^{\mb w_2, \mb w_3 +e_1+e_2}\sigma^{\mb w_0, \mb w_3 +e_2} \eta^{\mb w}) 
\;=\; \frac 1n\;\cdot 
\end{equation*}
The rate $\mb R (\mc E^{2,2}_{\mb w}, \Pi)$ vanishes if $\Pi$ does not
intersect $\{\eta^{\mb w}, \xi_1\}\cup_{(i,j)\not = (2,2)} \mc
E^{i,j}_{\mb w}$, where $\xi_1$ is the configuration appearing in the
previous displayed formula.  Hence, on the time scale $e^\beta$,
starting from the valley $\mc E^{2,2}_{\mb w}$ the process may fall in
the deep well $\eta^{\mb w}$, it may reach some valley $\mc
E^{i,j}_{\mb w}$, $(i,j)\not = (2,2)$, which are similar to $\mc
E^{2,2}_{\mb w}$, or attain the configuration $\sigma^{\mb w_2, \mb
  w_3 +e_1+e_2} \sigma^{\mb w_0, \mb w_3 +e_2}\eta^{\mb w}$. In the
next subsection we show that this configuration is the well of a
valley, a property shared by a class of configurations.

\subsection{The valleys $\{\eta_{\mb x}^{\mf a, (\mb
  k,\bs \ell)}\}$}
\label{ss2}

Let $R^{\mf l}$, $R^{\mf s}$ be the rectangles $R^{\mf l} =\{1,\dots,
n-1\}\times\{1, \dots, n-2\}$, $R^{\mf s} = \{1, \dots, n-2\} \times
\{1,\dots, n-1\}$, where $\mf l$ stands for lying and $\mf s$ for
standing. Let $n_0^{\mf s} = n_2^{\mf s} = n-2$, $n_1^{\mf s} =
n_3^{\mf s} = n-1$ be the length of the sides of the standing
rectangle $R^{\mf s}$. Similarly, denote by $n_i^{\mf l}$, $0\le i\le
3$, the length of the sides of the lying rectangle $R^{\mf l}$:
$n_i^{\mf l} = n_{i+1}^{\mf s}$, where the sum over the index $i$ is
performed modulo $4$.

Denote by $\bb I_{\mf a}$, $\mf a \in \{\mf s ,\mf l\}$, the set of
pairs $(\mb k, \bs \ell) = (k_0,\ell_0; k_1,\ell_1; k_2,\ell_2;
k_3,\ell_3)$ such that
\begin{itemize}
\item $0\le k_i \le \ell_i \le n_i^{\mf a}$,
\item If $k_j=0$, then $\ell_{j-1}= n_{j-1}^{\mf a}$.
\end{itemize}
For $(\mb k,\bs \ell) \in \bb I_{\mf a}$, $\mf a \in\{ \mf s, \mf
l\}$, let $R^{\mf l}(\mb k,\bs \ell)$, $R^{\mf s}(\mb k,\bs \ell)$ be
the sets
\begin{equation*}
\begin{split}
R^{\mf l} (\mb k,\bs \ell)\; & =\;
R^{\mf l} \; \cup\; \{(a,0) : k_0\le a\le \ell_0\} \; \cup\;
\{(n,b) : k_1\le b\le \ell_1\}\;\cup \\
& \cup\; \{(n-a,n-1) : k_2\le a\le \ell_2\}
\;\cup\; \{(0,n-1-b) : k_3\le b\le \ell_3\}\;, \\
R^{\mf s} (\mb k,\bs \ell)  \;& =\;
R^{\mf s} \; \cup\; \{(a,0) : k_0\le a\le \ell_0\} \; \cup\;
\{(n-1,b) : k_1\le b\le \ell_1\} \;\cup \\
& \cup\; \{(n-1-a,n) : k_2\le a\le \ell_2\}
\;\cup\; \{(0,n-b) : k_3\le b\le \ell_3\} \;.
\end{split}
\end{equation*}
Note that a hole between particles on the side of a rectangle is not
allowed in the sets $R^{\mf a} (\mb k,\bs \ell)$, $R^{\mf s} (\mb
k,\bs \ell)$. 

Denote by $I_{\mf a}$, $\mf a \in \{ \mf s, \mf l\}$, the set of pairs
$(\mb k, \bs \ell)\in \bb I_{\mf a}$ such that $|R^{\mf a} (\mb k,\bs
\ell)|=n^2$. For $(\mb k,\bs \ell) \in I_{\mf a}$, denote by $M_i(\mb
k,\bs \ell)$ the number of particles attached to the side $i$ of the
rectangle $R^{\mf a} (\mb k,\bs \ell)$:
\begin{equation*}
M_i(\mb k,\bs \ell) \;=\; 
\begin{cases}
\ell_i-k_i+1 & \text{if $k_{i+1}\ge 1\;,$}\\
\ell_i-k_i+2 & \text{if $k_{i+1}=0$}\ .
\end{cases}
\end{equation*}
Clearly, for $(\mb k, \bs \ell)\in I_{\mf a}$, $\sum_{0\le i\le 3}
M_i(\mb k,\bs \ell) = 3n-2 + A$, where $A$ is the number of occupied
corners, which are counted twice since they are attached to two sides.

Denote by $I^*_{\mf a} \subset I_{\mf a}$, the set of pairs $(\mb k,
\bs \ell)\in I_{\mf a}$ whose rectangles $R^{\mf a} (\mb k,\bs \ell)$
have at least two particles on each side: $M_i(\mb k,\bs \ell)\ge 2$,
$0\le i\le 3$. Note that if $(\mb k, \bs \ell)$ belongs to $I^*_{\mf
  a}$, for all $\mb x\in R^{\mf a} (\mb k,\bs \ell)$, there exist $\mb
y$, $\mb z \in R^{\mf a} (\mb k,\bs \ell)$, $\mb y\not = \mb z$, with
the property $\Vert \mb x-\mb y\Vert =\Vert \mb x-\mb z\Vert =1$.

For $(\mb k,\bs \ell) \in I_{\mf a}$, $\mf a \in\{ \mf s, \mf l\}$,
$\mb x\in \Lambda_L$, let $R^{\mf a}_{\mb x} (\mb k,\bs \ell) = \mb x
+ R^{\mf a} (\mb k,\bs \ell)$, and let $\eta_{\mb x}^{\mf a, (\mb
  k,\bs \ell)}$ represent the configurations defined by
\begin{equation*}
\text{$\eta_{\mb x}^{\mf a, (\mb k,\bs \ell)} (a,b) = 1$
    if and only if $(a,b)\in R_{\mb x}^{\mf a}(\mb k,\bs \ell)$}\;.
\end{equation*}
The configurations $\eta_{\mb x}^{\mf a, (\mb k,\bs \ell)}$, $(\mb
k,\bs \ell) \in I_{\mf a} \setminus I^*_{\mf a}$, belong to $\Omega^1$
or form a $(n-1)\times(n+1)$ rectangle of particles with one extra
particle attached to a side of length $n+1$.  Let $\Omega^2 =
\Omega^2_{L,K}$, be the set of configurations associated to the pairs
$(\mb k,\bs \ell)$ in $I^*_{\mf a}$:
\begin{equation*}
\Omega^2_{\mb x} \;=\;  \{ \eta_{\mb x}^{\mf a, (\mb k,\bs \ell)} : 
\mf a \in\{ \mf s, \mf l\}, (\mb k,\bs \ell) \in I^*_{\mf a} \} \;, 
\quad
\Omega^2 \;=\;  \bigcup_{\mb x\in \Lambda_L} \Omega^2_{\mb x} \;.
\end{equation*}

\begin{figure}[!h]
\begin{center}
\includegraphics[scale =0.5]{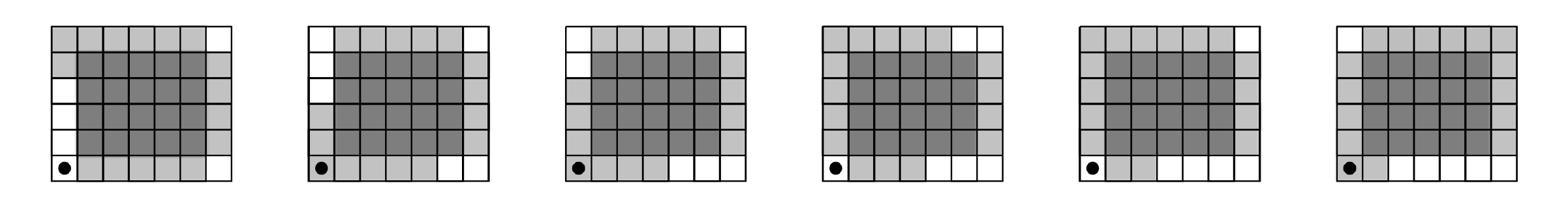}
\end{center}
\caption{Some configurations $\eta_{\mb x}^{\mf l, (\mb k,\bs \ell)}$
  for $n=6$. The first one corresponds to the vector $(\mb k,\bs
  \ell)= ((1,5); (1,4);(1,6); (0,1))$ and the last one to the vector
  $(\mb k,\bs \ell)= ((0,1); (1,5);(0,5);(1,5))$. The inner gray
  rectangle represents the set $\mb x + R^{\mf l}$ and the black dot
  the site $\mb x$.}
\end{figure}

To describe the valleys which can be attained from $\eta_{\mb x}^{\mf
  a, (\mb k,\bs \ell)}$ we have to define a map from $\Omega^2$ to
$\bb H_{01}$ which translates by one unit all particles in an external row
or column of a rectangle $R^{\mf a}_{\mb x} (\mb k,\bs \ell)$. This
must be done carefully because the translation of one row may produce
a configuration which does not belong to $\bb H_{01}$, or a configuration
$\eta_{\mb x}^{\mf a, (\mb k',\bs \ell')}$, where the vector $(\mb
k',\bs \ell')$ differs from $(\mb k,\bs \ell)$ in more than one
coordinate.

Denote by $I_{\mf a,i}^-$ (resp. $I_{\mf a,i}^+$), $0\le i\le 3$, the
pairs $(\mb k, \bs \ell)$ in $I_{\mf a}^*$ for which the particle
sitting at $k_i$ (resp. $\ell_i$) jumps to $k_i-1$ (resp. $\ell_i+1$)
at rate $e^{-\beta}$. The abuse of notation is clear. For instance, by
site $k_0$ we mean the site $(k_0,0)$, or, if $\mf a =\mf s$, by site
$\ell_2$ we mean site $(n-1-\ell_2,n)$. The subsets $I_{\mf a,i}^\pm$
of $I_{\mf a}^*$ are given by
\begin{equation*}
\begin{split}
& I_{\mf a,i}^- \;=\; \{(\mb k, \bs \ell)\in I^*_{\mf a} : 
k_i \ge 2 \text{ or } k_i=1 \,,\, \ell_{i-1} = n_{i-1}^{\mf a}\} \; , 
\\
&\quad
I_{\mf a,i}^+ \;=\; \{(\mb k, \bs \ell)\in I^*_{\mf a} : 
\ell_i \le   n_{i}^{\mf a}-1 \text{ or } \ell_i =  n_{i}^{\mf a}
\,,\, k_{i+1} = 1 \} \;.
\end{split}
\end{equation*}

For $(\mb k, \bs \ell)\in I_{\mf a,i}^-$, denote by $\hat T^-_{\mf
  a,i} \eta_{\mb x}^{\mf a, (\mb k,\bs \ell)}$ the configuration
obtained from $\eta_{\mb x}^{\mf a, (\mb k,\bs \ell)}$ by moving the
particle sitting at $k_i$ to $k_i-1$, with the same abuse of notation
alluded to before. Similarly, for $(\mb k, \bs \ell)\in I_{\mf
  a,i}^+$, denote by $\hat T^+_{\mf a,i} \eta_{\mb x}^{\mf a, (\mb
  k,\bs \ell)}$ the configuration obtained from $\eta_{\mb x}^{\mf a,
  (\mb k,\bs \ell)}$ by moving the particle sitting at $\ell_i$ to
$\ell_i+1$.

Define the map $T^-_{\mf a,i} : I^-_{\mf a,i} \to I_{\mf a}$ by
\begin{equation*}
T^-_{\mf a,i} (\mb k, \bs \ell) \;=
\begin{cases}
(\mb k - \mf e_i , \bs \ell - \mf e_i ) & \text{if $k_{i+1}\ge 1$,} \\
(\mb k - \mf e_i + \mf e_{i+1}, \bs \ell)& \text{if $k_{i+1}=0$,} 
\end{cases}
\end{equation*}
where $\{\mf e_1, \dots, \mf e_4\}$ stands for the canonical basis of
$\bb R^4$. The map $T^+_{\mf a,i} : I^+_{\mf a,i} \to I_{\mf a}$ is
defined in an analogous way. Hence, the map $T^+_{\mf s,2}$ translate
to the \emph{left} all particles on the top row of the rectangle
$R^{\mf s}$ and the map $T^-_{\mf l,3}$ translate in the \emph{upward}
direction all particles on the leftmost column of $R^{\mf l}$.

The vector $T^\pm_{\mf a,i}(\mb k, \bs \ell)$ may not belong to
$I^*_{\mf a}$ when there are only two particles on one side of a
rectangle $R^{\mf a}$ and one of them is translated along another
side. For example, suppose that $k_0=1$, $\ell_0=n-2$, $k_1=0$,
$\ell_1=1$ for a vector $(\mb k, \bs \ell)\in I^*_{\mf s}$. In this
case, necessarily $k_2=1$, $\ell_2 = n-2$, $k_3=0$, $\ell_3=n-1$, and
$T^-_{\mf s,0} (\mb k, \bs \ell)\not \in I^*_{\mf s}$. In fact, the
configuration $\eta^{\mf s, T^-_{\mf s,0} (\mb k, \bs \ell)}_{\mb x}$
belongs to the set $\Omega^3$ to be introduced in the next
subsection. Similarly, if $k_1=2$, $\ell_1=n-1$, $k_2=0$, $\ell_2=1$
for a vector $(\mb k, \bs \ell)\in I^*_{\mf s}$, $T^-_{\mf s,1} (\mb
k, \bs \ell)\not\in I^*_{\mf s}$, and $\eta^{\mf s, T^-_{\mf s,1} (\mb
  k, \bs \ell)}_{\mb x} \in \Omega^1$.

Fix a vector $(\mb k, \bs \ell) \in I^*_{\mf a}$ such that $M_i (\mb
k, \bs \ell) =2$ for some $0\le i\le 3$. Denote by $J_{\mf a, i} (\mb
k, \bs \ell)$ the interval over which the particles on side $i$ may
move:
\begin{equation*}
J_{\mf a, i} \;=\; J_{\mf a, i} (\mb k, \bs \ell)\;=\; 
\Big\{ 1 - \mb 1\{\ell_{i-1} = n^{\mf a}_{i-1}\} \,,\, 
\dots \,,\, n^{\mf a}_i + \mb 1\{k_{i+1}\le 1\} \Big\}\;,
\end{equation*}
and by $T^b_{\mf a, i} (\mb k, \bs \ell)$, $b, b+1 \in J_{\mf a, i}$,
the vector obtained from $(\mb k, \bs \ell)$ by replacing the occupied
sites $k_i$, $k_i+1$ by the sites $b$, $b+1$. Note that $T^b_{\mf a,
  i} (\mb k, \bs \ell)$ belongs to $I^*_{\mf a}$ because we assumed
$n>3$. Note also that we did not excluded the possibility that $b=k_i$
in which case $T^b_{\mf a, i} (\mb k, \bs \ell) = (\mb k, \bs \ell)$.

Denote by $\mc N(\eta)$ the set of all configurations which can be
attained from $\eta\in\Omega^2$ by a rate $e^{-\beta}$ jump.  Note
that the set $\mc N(\eta^{\mf a, (\mb k, \bs \ell)}_{\mb x})$ may have
more than $8$ configurations. For example, if $\mf a=\mf s$, $\mb x =
\mb w$, $\ell_0=n-3$ and $k_1\ge 2$, the particle at $(n-2,1)$ jumps
at rate $e^{-\beta}$ to $(n-2,0)$. However, starting from this
configuration, the probability of the event $H(\bb H_{01}) \not =
H(\eta_{\mb w}^{\mf s, (\mb k,\bs \ell)})$ converges to $0$ since the
unique rate one jump from this configuration is the return to
$\eta_{\mb w}^{\mf s, (\mb k,\bs \ell)}$.

The proof of the next result is straightforward and left to the
reader. One just needs to identify all configurations which can be
reached by rate $1$ jumps from a configuration in $\mc N(\eta)$.

\begin{lemma}
\label{s13}
Fix $\eta\in\Omega^2$. Then, for all $\xi\in \mc N(\eta)$, there
exists a probability measure $\bb M (\xi, \,\cdot\,)$ defined on $\bb
H_{01}$ such that
\begin{equation*}
\lim_{\beta\to\infty} \mb P^\beta_{\xi}
\big[ \eta (H (\bb H_{01})) \in \Pi \big] \;=\; \bb M (\xi,\Pi)\;,
\quad \Pi \subset \bb H_{01} \;.
\end{equation*}
Moreover, for $0\le i\le 3$, $\mf a\in \{\mf s, \mf l\}$, $(\mb k, \bs
\ell)\in I_{\mf a,i}^\pm$
\begin{equation*}
\begin{split}
& \bb M (\hat T^\pm_{\mf a,i} \eta_{\mb x}^{\mf a, (\mb k,\bs \ell)},
\eta_{\mb x}^{\mf a, T^\pm_{\mf a,i} (\mb k,\bs  \ell)}) 
\;=\; \frac 1{M_i(\mb k,\bs \ell)} \;, \\
& \quad
\bb M (\hat T^\pm_{\mf a,i} \eta_{\mb x}^{\mf a, (\mb k,\bs \ell)},
\eta_{\mb x}^{\mf a, (\mb k,\bs \ell)}) \;=\; 
\frac{M_i(\mb k,\bs \ell)-1}{M_i(\mb k,\bs \ell)} 
\end{split}
\end{equation*}
if $M_i(\mb k,\bs \ell) \ge 3$; and
\begin{equation*}
\begin{split}
& \bb M (\hat T^-_{\mf a,i} \eta_{\mb x}^{\mf a, (\mb k,\bs \ell)},
\eta_{\mb x}^{\mf a, T^b_{\mf a, i} (\mb k, \bs \ell)}) 
\;=\; \mf m (J_{\mf a, i} (\mb k, \bs \ell), k_i-1,b) \; , \quad
b \;,\; b+1\in J_{\mf a, i} (\mb k, \bs \ell)\;, \\
& \quad \bb M (\hat T^+_{\mf a,i} \eta_{\mb x}^{\mf a, (\mb k,\bs \ell)},
\eta_{\mb x}^{\mf a, T^b_{\mf a, i} (\mb k, \bs \ell)}) 
\;=\; \mf m (J_{\mf a, i} (\mb k, \bs \ell), k_i,b) \; , \quad
b \;,\; b+1\in J_{\mf a, i} (\mb k, \bs \ell)\;,
\end{split}
\end{equation*}
if $M_i(\mb k,\bs \ell) =2$, where the probability $\mf m(J, a, c)$
has been introduced in \eqref{b13}.
\end{lemma}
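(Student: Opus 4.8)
The plan is to follow the scheme of the proofs of Lemmas \ref{bs01} and \ref{s10}. Fix $\eta\in\Omega^2$ and $\xi\in\mc N(\eta)$. Since $\xi$ is obtained from $\eta$ by a rate $e^{-\beta}$ jump it differs from $\eta$ by the displacement of a single particle along, or just off, one side of the underlying rectangle, and $\xi\in\bb H_2$. The key point is that from $\xi$, and from every configuration reachable from $\xi$ by rate one jumps before the first return to $\bb H_{01}$, the rate one transitions are those of a low-dimensional defect process — the positions, along the line just outside one side of the rectangle, of the detached particle and of the vacancy it left behind — while all the remaining $O(n)$ transitions have rate at most $e^{-\beta}$. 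Coupling $\eta^\beta_t$ with the chain obtained by erasing the rate $e^{-\beta}$ transitions, as in the proof of Proposition \ref{s09}, the coupling succeeds with probability tending to one because the defect process is a finite-state chain whose number of steps before hitting $\bb H_{01}$ has a law not depending on $\beta$; hence $\mb P^\beta_\xi[\eta(H_{\bb H_{01}})\in\Pi]$ converges and $\bb M(\xi,\,\cdot\,)$ is the hitting distribution of $\bb H_{01}$ for the defect chain. For the configurations of $\mc N(\eta)$ whose only rate one neighbour is $\eta$, such as the one exhibited before the statement, this gives $\bb M(\xi,\,\cdot\,)=\delta_\eta$, so it remains to treat $\xi=\hat T^\pm_{\mf a,i}\eta_{\mb x}^{\mf a,(\mb k,\bs\ell)}$.

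Suppose first that $M_i(\mb k,\bs\ell)\ge 3$, say $\xi=\hat T^-_{\mf a,i}\eta_{\mb x}^{\mf a,(\mb k,\bs\ell)}$; the other sign is symmetric. A single particle has been peeled off one extremity of a row of $M_i$ particles attached to side $i$, and during the rate one dynamics these $M_i$ particles perform a symmetric exclusion motion along the outer line of side $i$ in which the moves decreasing the number of nearest-neighbour pairs are suppressed; one checks — and this is where the conditions defining $I^-_{\mf a,i}$ enter — that they never leave that line. The process exits $\bb H_2$ exactly when the $M_i$ particles become contiguous again, and, the peeled particle having come off an extremity, this can only happen at the original position of the row, giving $\eta_{\mb x}^{\mf a,(\mb k,\bs\ell)}$, or with the row translated by one lattice unit, giving $\eta_{\mb x}^{\mf a,T^-_{\mf a,i}(\mb k,\bs\ell)}\in\bb H_{01}$ — the row turning the corner in the branch $k_{i+1}=0$, as in the definition of $T^-_{\mf a,i}$. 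Now the barycentre of the $M_i$ particles is a bounded martingale for this dynamics, equal to some value $c$ at $\eta_{\mb x}^{\mf a,(\mb k,\bs\ell)}$, to $c-1$ at $\eta_{\mb x}^{\mf a,T^-_{\mf a,i}(\mb k,\bs\ell)}$, and to $c-1/M_i$ at $\xi$; optional stopping yields $\bb M(\xi,\eta_{\mb x}^{\mf a,T^-_{\mf a,i}(\mb k,\bs\ell)})=1/M_i$ and $\bb M(\xi,\eta_{\mb x}^{\mf a,(\mb k,\bs\ell)})=(M_i-1)/M_i$.

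Suppose now that $M_i(\mb k,\bs\ell)=2$. After the rate $e^{-\beta}$ jump each of the two particles of side $i$ keeps only its bond to the rectangle, hence both are mobile at rate one and perform two independent symmetric random walks along the segment $J_{\mf a,i}(\mb k,\bs\ell)$, whose endpoints are the extreme sites at which such a particle retains one bond to the droplet — this is what the conditions $\ell_{i-1}=n^{\mf a}_{i-1}$, $k_{i+1}\le 1$ and the hypothesis $n>3$ encode. They start at distance two, namely at $(k_i-1,k_i+1)$ for $\hat T^-_{\mf a,i}\eta_{\mb x}^{\mf a,(\mb k,\bs\ell)}$ and at $(k_i,k_i+2)$ for $\hat T^+_{\mf a,i}\eta_{\mb x}^{\mf a,(\mb k,\bs\ell)}$, and the process stays in $\bb H_2$ until they first become adjacent, at which instant they bond, the energy drops, and the configuration — now in $\bb H_{01}$ — equals $\eta_{\mb x}^{\mf a,T^b_{\mf a,i}(\mb k,\bs\ell)}$, where $(b,b+1)$ is the location of the bonded pair. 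By the definition \eqref{b13} of $\mf m$ this has limiting probability $\mf m(J_{\mf a,i}(\mb k,\bs\ell),k_i-1,b)$ in the first case and $\mf m(J_{\mf a,i}(\mb k,\bs\ell),k_i,b)$ in the second; here the barycentre martingale yields a single identity and the full two-walker law is genuinely needed, which is why the case $M_i=2$ is singled out.

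The only real work is the combinatorial bookkeeping behind the last two paragraphs: one must enumerate exhaustively the rate one neighbours of every configuration of $\mc N(\eta)$ and of the configurations they generate, and check that the detached particle — or the two mobile particles when $M_i=2$ — cannot escape the outer line of the side under consideration, which amounts to saying that the defect chain is exactly the exclusion motion described above with absorbing states $\eta_{\mb x}^{\mf a,(\mb k,\bs\ell)}$ and $\eta_{\mb x}^{\mf a,T^\pm_{\mf a,i}(\mb k,\bs\ell)}$. This needs some care near the corners of the rectangle, where a translated row may turn a corner and the resulting configuration belong to $\Omega^1$ or $\Omega^3$ rather than to $\Omega^2$ — precisely the situations the definitions of $I^*_{\mf a}$ and $I^\pm_{\mf a,i}$ are designed to accommodate. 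Once this enumeration is in place, the control of the error terms is the coupling argument already used in Lemmas \ref{bs01} and \ref{s10}, the errors vanishing because the defect chain reaches $\bb H_{01}$ in a tight number of rate one jumps.
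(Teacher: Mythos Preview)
Your proposal is correct and, in fact, supplies considerably more detail than the paper itself, which merely states that the proof ``is straightforward and left to the reader'' after noting that ``one just needs to identify all configurations which can be reached by rate $1$ jumps from a configuration in $\mc N(\eta)$.'' Your reduction to a defect chain along the outer line of side $i$, together with the coupling argument borrowed from Lemmas \ref{bs01} and \ref{s10}, is exactly what the authors have in mind.

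One remark on presentation: for the case $M_i(\mb k,\bs\ell)\ge 3$ your barycentre--martingale argument is elegant and correct, but the more pedestrian route the paper implicitly suggests is simply to observe that the hole separating the detached particle from the remaining $M_i-1$ performs a symmetric nearest-neighbour random walk on a segment of length $M_i+1$, starting one step from one end, and to invoke the gambler's ruin probability $1/M_i$. Both arguments are equivalent here; yours has the advantage of generalising cleanly, while the hole--random--walk picture makes the identification of the two absorbing configurations $\eta_{\mb x}^{\mf a,(\mb k,\bs\ell)}$ and $\eta_{\mb x}^{\mf a,T^\pm_{\mf a,i}(\mb k,\bs\ell)}$ slightly more transparent. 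For $M_i=2$ your description of the two mobile particles as independent reflected walks on $J_{\mf a,i}$ stopped when adjacent is precisely the process underlying the definition \eqref{b13} of $\mf m$, and nothing further is needed.
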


Let
\begin{equation*}
Z(\eta_{\mb x}^{\mf a, (\mb k,\bs \ell)}) \; =\; 
\sum_{\xi \in \mc N(\eta_{\mb x}^{\mf a, (\mb k,\bs \ell)})}
\sum_{\zeta\not = \eta_{\mb x}^{\mf a, (\mb k,\bs \ell)}}
\bb M(\xi,\zeta) \; =\; \sum_{\xi \in \mc N(\eta_{\mb x}^{\mf a, (\mb k,\bs \ell)})}
\{1 - \bb M(\xi, \eta_{\mb x}^{\mf a, (\mb k,\bs \ell)})\} \;.
\end{equation*}
Note that $Z(\eta_{\mb x}^{\mf a, (\mb k,\bs \ell)})$ does not depend
on $\mb x$ and that 
\begin{equation*}
\begin{split}
Z(\eta_{\mb w}^{\mf a, (\mb k,\bs \ell)}) \; & =\; \sum_{i=0}^3 
\frac{\mb 1\{M_i(\mb k,\bs \ell) > 2\}} {M_i(\mb k,\bs \ell)} 
\, \Big\{ \, \mb 1\{(\mb k,\bs \ell) \in I^-_{\mf a,i}\} + 
\mb 1\{(\mb k,\bs \ell) \in I^+_{\mf a,i}\} \Big\} \\
& +\; \sum_{i=0}^3 \mb 1\{M_i(\mb k,\bs \ell) = 2\}  
\, \mb 1\{(\mb k,\bs \ell) \in I^-_{\mf a,i}\} \, 
\big [1- \mf m(J_{\mf a, i} , k_i-1, k_i)\big ]  \\
& +\; \sum_{i=0}^3 \mb 1\{M_i(\mb k,\bs \ell) = 2\}  
\, \mb 1\{ (\mb k,\bs \ell) \in I^+_{\mf a,i} \} \, 
\big [1-\mf m(J_{\mf a, i} , k_i, k_i)\big ] \;.   
\end{split}
\end{equation*}

\begin{proposition}
\label{s06}
Fix $\mb x\in \Lambda_L$, $\mf a\in\{\mf s, \mf l\}$, $(\mb k, \bs
\ell) \in I^*_{\mf a}$. Then,
\begin{enumerate}
\item The triple $(\{\eta_{\mb x}^{\mf a, (\mb k,\bs \ell)}\},
  \{\eta_{\mb x}^{\mf a, (\mb k,\bs \ell)}\}\cup \Delta_1, \eta_{\mb
    x}^{\mf a, (\mb k,\bs \ell)})$ is a valley of depth given by
  $\mu_K (\eta_{\mb x}^{\mf a, (\mb k,\bs \ell)})/$
  $\Cap_K(\{\eta_{\mb x}^{\mf a, (\mb k,\bs \ell)}\}, [\{\eta_{\mb
    x}^{\mf a, (\mb k,\bs \ell)}\}\cup \Delta_1]^c)$;

\item Under $\mb P^\beta_{\eta_{\mb x}^{\mf a, (\mb k,\bs \ell)}}$, $H
  (\bb H_{01}\setminus \{\eta_{\mb x}^{\mf a, (\mb k,\bs
    \ell)}\})/e^\beta$ converges in distribution to an exponential
  random variable of parameter $Z(\eta_{\mb x}^{\mf a, (\mb k,\bs
    \ell)})$;

\item For any $\Pi \subset \bb H_{01}\setminus\{\eta_{\mb x}^{\mf a, (\mb
    k,\bs \ell)}\}$,
\begin{equation*}
\begin{split}
& \lim_{\beta\to\infty} \mb P^\beta_{\eta_{\mb x}^{\mf a, (\mb k,\bs
    \ell)}}
\big[ \eta (H (\bb H_{01}\setminus\{\eta_{\mb x}^{\mf a, (\mb
    k,\bs \ell)}\})) \in \Pi \big] \\
&\qquad \;=\; \frac 1{Z(\eta_{\mb x}^{\mf a, (\mb k,\bs \ell)})} 
\sum_{\xi \in \mc N(\eta_{\mb x}^{\mf a, (\mb k,\bs \ell)})} 
\bb M (\xi,\Pi)\;=:\; Q(\eta_{\mb x}^{\mf a, (\mb k,\bs \ell)}, \Pi)\;.
\end{split}
\end{equation*}
\end{enumerate}
\end{proposition}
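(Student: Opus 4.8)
The proof follows that of Proposition~\ref{s09} almost line by line, the only genuine simplification being that the well is now the singleton $\{\zeta\}$, where we write $\zeta := \eta_{\mb x}^{\mf a, (\mb k,\bs \ell)}$, so that the ``approach to equilibrium inside the well'' estimates become vacuous. The starting point, already implicit in the discussion preceding Lemma~\ref{s13}, is that because $(\mb k, \bs \ell)\in I^*_{\mf a}$ each side of the rectangle $R^{\mf a}(\mb k,\bs\ell)$ carries at least two particles; consequently no particle of $\zeta$ is attached by a single bond, no energy-preserving move and no energy-decreasing move is available, and $\zeta$ is a strict local minimum of $\bb H$ in the graph $\Omega_{L,K}$. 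In particular $\zeta\in\bb H_1$, $\mu_K(\zeta) = e^{-\beta}\mu_K(\eta^{\mb w})$, and every configuration reachable from $\zeta$ by one jump lies in $\bb H_2$ or above.

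For assertion~(1) I would apply \cite[Theorem 2.6]{bl2}. Condition~(2.15) is immediate from the definition of $\Delta_1$, as in Proposition~\ref{s09}. Since $\zeta$ is a strict local minimum and $\bb H_{01}=\bb H_0\cup\bb H_1$, the first step of any self-avoiding path joining $\zeta$ to $\bb H_{01}\setminus\{\zeta\}=[\{\zeta\}\cup\Delta_1]^c$ already leaves $\bb H_{01}$, so such a path visits a configuration of $\bb H_2$ and hence $G_K(\{\zeta\}, [\{\zeta\}\cup\Delta_1]^c)\le e^{-2\beta}\mu_K(\eta^{\mb w})$; together with \eqref{17} this yields condition~(2.14) and therefore assertion~(1), with depth $\mu_K(\zeta)/\Cap_K(\{\zeta\},[\{\zeta\}\cup\Delta_1]^c)$.

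For assertion~(2) I would run the same recursion as in Proposition~\ref{s09}. Letting $\tau_1$ be the time of the first jump of the process started at $\zeta$,
\begin{equation*}
H_{\bb H_{01}\setminus\{\zeta\}} \;=\; \tau_1 \;+\; H_{\bb H_{01}}\circ\theta_{\tau_1} \;+\; \mb 1\{H_{\bb H_{01}}\circ\theta_{\tau_1} = H_{\{\zeta\}}\circ\theta_{\tau_1}\}\, H_{\bb H_{01}\setminus\{\zeta\}}\circ\theta_{H^+_{\bb H_{01}}}\;.
\end{equation*}
Taking Laplace transforms at $\lambda_\beta = \lambda e^{-\beta}$, and using — exactly as in Proposition~\ref{s09} — that every configuration of $\mc N(\zeta)$ hits $\bb H_{01}$ after a number of rate one jumps which is tight in $\beta$ (so that $e^{-\lambda_\beta H_{\bb H_{01}}}$ may be replaced by $1$ at a vanishing cost), the whole computation is driven by the joint limit law of $(e^{-\beta}\tau_1, \eta^\beta_{\tau_1})$. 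Here, since the well is a singleton, no appeal to \cite[Theorem 2.7]{bl2} is needed: the total jump rate out of $\zeta$ equals $|\mc N(\zeta)|\,e^{-\beta}(1+o(1))$ and the first jump lands uniformly in $\mc N(\zeta)$ up to $o(1)$, so $e^{-\beta}\tau_1$ converges to an exponential of mean $|\mc N(\zeta)|^{-1}$, independent of $\eta^\beta_{\tau_1}$, whose law converges to the uniform measure on $\mc N(\zeta)$. Combining this with Lemma~\ref{s13} as in Proposition~\ref{s09}, $H_{\bb H_{01}\setminus\{\zeta\}}/e^\beta$ converges to an exponential of parameter
\begin{equation*}
|\mc N(\zeta)|\cdot\frac 1{|\mc N(\zeta)|}\sum_{\xi\in\mc N(\zeta)}\big(1-\bb M(\xi,\zeta)\big) \;=\; Z(\zeta)\;,
\end{equation*}
which is assertion~(2). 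For assertion~(3) I would reproduce the geometric-series argument of Proposition~\ref{s09}: with $\{H_j\}$ the successive return times to $\bb H_{01}$, the strong Markov property together with the uniform limit of the law of $\eta^\beta_{\tau_1}$ and Lemma~\ref{s13} give $\lim_\beta\mb P^\beta_\zeta[\eta(H_1)\in\Pi'] = q(\Pi') := |\mc N(\zeta)|^{-1}\sum_{\xi\in\mc N(\zeta)}\bb M(\xi,\Pi')$ for all $\Pi'\subset\bb H_{01}$, and summing over the number of returns to $\zeta$ yields $\lim_\beta\mb P^\beta_\zeta[\eta(H_{\bb H_{01}\setminus\{\zeta\}})\in\Pi] = q(\Pi)/(1-q(\{\zeta\})) = Q(\zeta,\Pi)$.

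There is no serious obstacle: the substance is entirely contained in Lemma~\ref{s13}, which is granted, and in the parallel with Proposition~\ref{s09}. The only point demanding a little care is the bookkeeping of $\mc N(\zeta)$ when some side of the rectangle carries exactly two particles, since then $\mc N(\zeta)$ can contain more than eight configurations; but the ``spurious'' ones — those $\xi\in\mc N(\zeta)$ from which the process returns to $\zeta$ with probability tending to one, i.e.\ with $\bb M(\xi,\zeta)=1$ — contribute nothing to $Z(\zeta)$ nor to $\sum_{\xi}\bb M(\xi,\Pi)$ when $\Pi\subset\bb H_{01}\setminus\{\zeta\}$, so they drop out of all the final formulas.
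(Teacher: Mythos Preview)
Your proof is correct and follows essentially the same route as the paper's: both take $\tau_1$ to be the time of the first jump, exploit the fact that for a singleton well $(e^{-\beta}\tau_1,\eta^\beta_{\tau_1})$ converges to an independent pair (exponential of parameter $|\mc N(\zeta)|$, uniform on $\mc N(\zeta)$) without any coupling or appeal to \cite[Theorem~2.7]{bl2}, and then rerun the Laplace-transform and geometric-series arguments of Proposition~\ref{s09} with Lemma~\ref{s13} in place of Lemma~\ref{s10}. The only cosmetic difference is that the paper dispatches condition~(2.14) of \cite[Theorem~2.6]{bl2} by observing it is vacuous for a singleton well, whereas you supply an (unnecessary but harmless) $G_K$ estimate.
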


\begin{proof}
Recall \cite[Theorem 2.6]{bl2}. Assumption (2.14) is fulfilled by
default and assumption (2.15) follows from the definition of the set
$\Delta_1$. This proves the first assertion of the proposition.

The proof of the second claim is simpler than the one of the second
assertion of Proposition \ref{s09} if we take $\tau_1$ as the time of
the first jump. With this definition, $\tau_1$ and
$\eta^\beta_{\tau_1}$ are independent random variables by the Markov
property, $\tau_1/e^\beta$ converges to an exponential random variable
of parameter $|\mc N|$, where $\mc N = \mc N(\eta^{\mf a, (\mb k, \bs
  \ell)}_{\mb x})$, and $\eta^\beta_{\tau_1}$ converges to a random
variable which is uniformly distributed over $\mc N$.

By the arguments of Proposition \ref{s09}, starting from $\eta_{\mb
  x}^{\mf a, (\mb k,\bs \ell)}$, $H (\bb H_{01}\setminus \{\eta_{\mb
  x}^{\mf a, (\mb k,\bs \ell)}\})/e^\beta$ converges in distribution
to an exponential random variable of parameter
\begin{equation*}
\gamma \;=\; \lim_{\beta\to\infty} \sum_{\xi\in \mc N}
\mb P^\beta_{\xi} \Big[ H(\bb H_{01}) \not = H(\eta_{\mb
    x}^{\mf a, (\mb k,\bs \ell)})\Big]\;.
\end{equation*}
To conclude the proof, it remains to recall the statement of Lemma
\ref{s13}, and the definition of $Z(\eta_{\mb x}^{\mf a, (\mb k,\bs
  \ell)})$.

The proof of the third assertion of the proposition is identical to the
one of the third claim of Proposition \ref{s09}.
\end{proof}

As in \eqref{19}, the second assertion of this proposition gives an
explicit expression for the depth of the valley presented in the first
statement. On the other hand, following \eqref{b12}, for
$\eta\in\Omega_2$, let
\begin{equation*}
\mb R(\eta, \Pi) \;=\; 
Z(\eta)\, Q(\eta, \Pi) \;=\; 
\sum_{\xi \in \mc N(\eta)} 
\bb M (\xi,\Pi) \;, \quad
\Pi\subset \bb H_{01}\;. 
\end{equation*}
By Lemma \ref{s13}, if $M_i(\mb k,\bs \ell) >2$ for some $0\le i\le
3$,
\begin{equation*}
\mb R (\eta^{\mf a, (\mb k,\bs \ell)}_{\mb x},
\eta^{\mf a, T^\pm_{\mf a, i}(\mb k,\bs \ell)}_{\mb x}) \;= \; 
\frac {\mb  1\{(\mb k,\bs \ell) \in I^\pm_{\mf a,i}\}} 
{M_i(\mb k,\bs \ell)}
\end{equation*}
and if $M_i(\mb k,\bs \ell) =2$,
\begin{equation*}
\mb R (\eta^{\mf a, (\mb k,\bs \ell)}_{\mb x},
\eta^{\mf a, T^b_{\mf a, i}(\mb k,\bs \ell)}_{\mb x}) \;=\;
p_{\mf a}(\bs k, \bs \ell, i, b)\;,  
\quad b, b+1 \in J_{\mf a, i} (\mb k, \bs \ell)\;,
\end{equation*}
where
\begin{equation*}
p_{\mf a}(\bs k, \bs \ell, i, b)  
\;=\; \mb 1\{(\mb k,\bs \ell) \in I^-_{\mf a,i}\} \,
\mf m (J_{\mf a, i}, k_i-1, b) \;+\;
\mb 1\{(\mb k,\bs \ell) \in I^+_{\mf a,i}\} \, \mf m 
(J_{\mf a, i}, k_i, b)\;.
\end{equation*}

It follows from Proposition \ref{s06} and Lemma \ref{s13} that
starting from a configuration $\zeta\in \Omega^2$ the process
$\eta^\beta_t$ reaches $\bb H_{01}$ only in a configuration of
$\Omega^1 \cup \Omega^2$ or in a configuration in which all sites of a
$(n-1)\times (n+1)$ rectangle are occupied and an extra particle is
attached to a side of length $n+1$. To pursue our analysis, we have to
investigate this new set of configurations.
 
\subsection{The valleys $\mc E^{\mf a, i}_{\mb x}$}
\label{ss3}

The arguments of this subsection are similar to the ones of Subsection
\ref{ss1}.  Let $T^{\mf l}$, $T^{\mf s}$ be the rectangles $T^{\mf l}
=\{0,\dots, n\}\times\{0, \dots, n-2\}$, $T^{\mf s} = \{0, \dots,
n-2\} \times \{0,\dots, n\}$. Denote by $T^{\mf a}_{\mb x}$, $\mf a
\in\{ \mf s, \mf l\}$, $\mb x\in\Lambda_L$, the rectangle $T^{\mf a}$
translated by $\mb x$: $T^{\mf a}_{\mb x} = {\mb x} + T^{\mf a}$, and
by $\eta_{\mb x, \mf a}$ the configuration in which all sites of
$T^{\mf a}_{\mb x}$ are occupied. Note that $\eta_{\mb x, \mf a}$
belongs to $\Omega_{L,K-1}$ and not to $\Omega_{L,K}$.

For $\mf a \in\{ \mf s, \mf l\}$, $\mb x\in\Lambda_L$, $\mb z\in
\partial_+ T^{\mf a}_{\mb x}$, denote by $\eta_{\mb x, \mf a}^{\mb z}$
the configuration in which all sites of the rectangle $T^{\mf a}_{\mb
  x}$ and the site $\mb z$ are occupied: $\eta_{\mb x, \mf a}^{\mb z}
= \eta_{\mb x, \mf a} + \mf d_z$, where $\mf d_y$, $y\in \Lambda_L$,
is the configuration with a unique particle at $y$ and summation of
configurations is performed componentwise.  Denote by $\partial_j
T^{\mf a}_{\mb x}$, $0\le j\le 3$, the $j$-th boundary of $T^{\mf
  a}_{\mb x}$:
\begin{equation*}
\begin{split}
& \partial_jT^{\mf a}_{\mb x}  \;=\; \{\mb z\in \partial_+ T^{\mf a}_{\mb x} : 
\exists\, \mb y\in  T^{\mf a}_{\mb x}\,;\, \mb y -\mb z = (1-j) e_2\} 
\quad j=0,2 \; , \\
& \quad \partial_j T^{\mf a}_{\mb x} \;=\; 
\{\mb z\in \partial_+ T^{\mf a}_{\mb x} : 
\exists\, \mb y\in T^{\mf a}_{\mb x} \,;\, \mb y -\mb z = (j-2) e_1\} 
\quad j=1,3\;.
\end{split}
\end{equation*} 
Let 
\begin{equation*}
\mc  E^{\mf a,j}_{\mb x} \;=\; \{ \eta_{\mb x, \mf a}^{\mb z} : 
\mb z\in \partial_j T^{\mf a}_{\mb x}\}\;,
\end{equation*}
and let $\Omega^3= \Omega^3_{L,K}$ be the set of all such
configurations:
\begin{equation*}
\Omega^3_{\mb x} \;=\; \bigcup_{j=0}^3 \bigcup_{\mf a \in 
\{ \mf s, \mf l\}} \mc  E^{\mf a,j}_{\mb x} \;,
\quad
\Omega^3 \;=\; \bigcup_{\mb x\in\Lambda_L} \Omega^3_{\mb x} \;.
\end{equation*} 

\begin{figure}[!h]
\begin{center}
\includegraphics[scale =0.5]{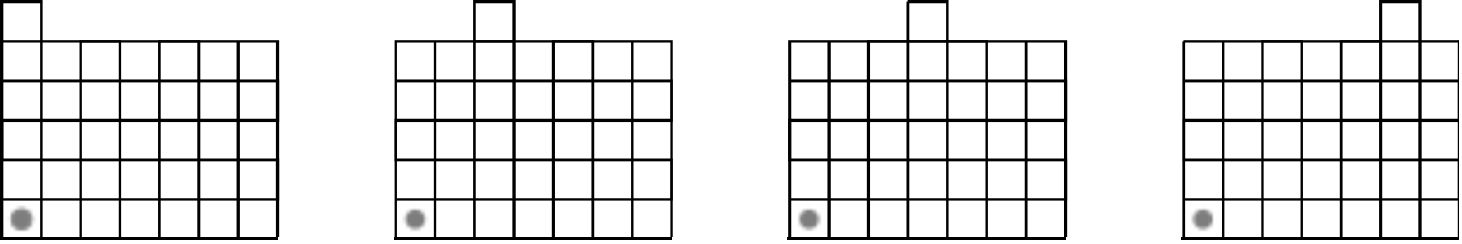}
\end{center}
\caption{Four among the seven configurations of $\mc E^{\mf l,2}_{\mb
    x}$ for $n=6$. The gray dot represents the site $\mb x$.}
\end{figure}

The process $\{\eta^\beta_t : t\ge 0\}$ can reach any configuration
$\xi\in \mc E^{\mf a,j}_{\mb x}$ from any configuration $\eta\in \mc
E^{\mf a,j}_{\mb x}$ with rate one jumps.  The main result of this
subsection states that for any configuration $\xi\in \mc E^{\mf
  a,j}_{\mb x}$, the triples $(\mc E^{\mf a,j}_{\mb x}, \mc E^{\mf
  a,j}_{\mb x}\cup \Delta_1, \xi)$ are valleys.

Denote by $\mc N(\mc E^{\mf a,j}_{\mb x})$, the configurations which
do not belong to $\mc E^{\mf a,j}_{\mb x}$, but which can be reached
from a configuration in $\mc E^{\mf a,j}_{\mb x}$ by performing a jump
of rate $e^{-\beta}$. The set $\mc N(\mc E^{\mf s,0}_{\mb w})$ has the
following $n+3$ elements. There are $n+1$ configurations obtained when
the bottom particle detaches itself from the others: $\eta_{\mb w, \mf
  s} + \mf d_z$, where $\mb z\in J_2 =\{(-1,-1), (a,-2), (n-1,-1) :
0\le a\le n-2\}$. There is a configuration in $\mc N(\mc E^{\mf
  s,0}_{\mb w})$ which is obtained when the bottom particle is at
$(1,-1)$ and the particle at $\mb w$ moves to $\mb w-e_2$:
$\sigma^{\mb w, \mb w-e_2} \eta_{\mb w, \mf s}^{(1,-1)}$. The last
configuration of $\mc N(\mc E^{\mf s,0}_{\mb w})$ is obtained when the
bottom particle is at $(n-3,-1)$ and the particle at $\mb w_1-e_1$
moves to $\mb w_1-e_1-e_2$: $\sigma^{\mb w_1-e_1, \mb w_1-e_1-e_2}
\eta_{\mb w, \mf s}^{(n-3,-1)}$.

\begin{lemma}
\label{s07}
Fix $\mb x\in \Lambda_L$, $\mf a\in\{\mf s, \mf l\}$ and $0\le j\le
3$. For each $\xi\in \mc N(\mc E^{\mf a,j}_{\mb x})$, there exists a
probability measure $\bb M (\xi, \,\cdot\,)$ defined on $\bb H_{01}$
such that
\begin{equation*}
\lim_{\beta\to\infty} \mb P^\beta_{\xi} 
\big[ \eta(H_{\bb H_{01}}) \in \Pi \big] \;=\; \bb M (\xi,
\Pi)\;, \quad \Pi \subset\bb H_{01}\;.
\end{equation*}
Moreover, if $\xi\in \mc N(\mc E^{\mf s,0}_{\mb w})$,
\begin{equation*}
\bb M (\eta_{\mb w, \mf s} + \mf d_z , \mc E^{\mf s,j}_{\mb w})
\;=\;
\mf p(\mb z, \partial_j T^{\mf s}_{\mb w}, \partial_+ T^{\mf s}_{\mb w})
\;,\quad 0\le j\le 3\;,\;\;\mb z\in J_2 \;.
\end{equation*}
\begin{equation*}
\bb M (\sigma^{\mb w, \mb w-e_2} \eta_{\mb w, \mf s}^{(1,-1)} , 
\Pi) \;=\;
\left\{
\begin{split}
& \frac 1{n+1} \quad \text{\rm if}\quad \Pi = 
\{\sigma^{\mb w_3+e_2, \mb w-e_2} \eta_{\mb w, \mf s}^{(1,-1)}\}\;, \\
& \frac n{n+1} \quad \text{\rm if}\quad \Pi = 
\{\eta_{\mb w, \mf s}^{(1,-1)}\} \;.
\end{split}
\right.
\end{equation*}
\begin{equation*}
\bb M (\xi , \Pi) \;=\;
\left\{
\begin{split}
& \frac 1{n+1} \quad \text{\rm if}\quad \Pi = 
\{\sigma^{\mb w_2-e_1+e_2, \mb w_1-e_1-e_2} \eta_{\mb w, \mf s}^{(n-3,-1)}\}\;, \\
& \frac n{n+1} \quad \text{\rm if}\quad \Pi = 
\{\eta_{\mb w, \mf s}^{(n-3,-1)}\} \;,
\end{split}
\right.
\end{equation*}
if $\xi = \sigma^{\mb w_1-e_1, \mb w_1-e_1-e_2} \eta_{\mb w, \mf s}^{(n-3,-1)}$.
\end{lemma}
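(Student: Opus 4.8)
The plan is to follow the pattern of the proofs of Lemmas \ref{bs01}, \ref{s10} and \ref{s13}. Fix $\xi\in\mc N(\mc E^{\mf a,j}_{\mb x})$; such a configuration has energy $\bb H_{\rm min}+2$ and contains a single ``free'' particle or a single ``free'' hole that moves at rate one, while every transition leaving the corresponding family of configurations has rate at most $e^{-\beta}$. As in those lemmas, this free motion reaches $\bb H_{01}$ after a finite random number of rate one jumps whose law is tight in $\beta$, whereas the $O(n)$ competing clocks ring at rate $O(e^{-\beta})$; the main obstacle is the uniform estimate which makes this quantitative, namely $\lim_{A\to\infty}\limsup_{\beta\to\infty}\max_{\zeta}\mb P^\beta_{\zeta}[H_{\bb H_{01}}>A]=0$, the maximum being over the finitely many configurations of $\mc N(\mc E^{\mf a,j}_{\mb x})$ and the equivalence classes reachable from them, which is obtained exactly as in the proof of Proposition \ref{s09}. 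Granting this, with probability tending to one the process attains $\bb H_{01}$ before any slow jump, and $\bb M(\xi,\cdot)$ is the hitting distribution of the free motion on its deterministic absorbing set; since this set is always contained in $\bb H_{01}$, no auxiliary linear equations and no iteration are needed, in contrast with Lemma \ref{bs01}. By translation invariance and by the reflection symmetries of the rectangle $T^{\mf a}_{\mb x}$, it suffices to prove the explicit formulas when $\mf a=\mf s$, $j=0$ and $\mb x=\mb w$.

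For the $n+1$ configurations $\eta_{\mb w,\mf s}+\mf d_{\mb z}$, $\mb z\in J_2$, the free degree of freedom is the detached particle at $\mb z$. This particle has no occupied neighbour, so until it first reaches $\partial_+ T^{\mf s}_{\mb w}$ it performs an honest rate one, nearest-neighbour, symmetric random walk on $\Lambda_L$; and since each site of $\partial_+ T^{\mf s}_{\mb w}$ is adjacent to exactly one site of the rectangle, the instant the walk reaches $\mb z'\in\partial_k T^{\mf s}_{\mb w}$ the configuration becomes $\eta_{\mb w,\mf s}^{\mb z'}\in\mc E^{\mf s,k}_{\mb w}\subset\bb H_1$, which is precisely the first visit to $\bb H_{01}$. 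Hence $\bb M(\eta_{\mb w,\mf s}+\mf d_{\mb z},\mc E^{\mf s,j}_{\mb w})$ is the probability that the walk from $\mb z$ enters $\partial_+ T^{\mf s}_{\mb w}$ through $\partial_j T^{\mf s}_{\mb w}$, i.e. $\mf p(\mb z,\partial_j T^{\mf s}_{\mb w},\partial_+ T^{\mf s}_{\mb w})$ by the very definition of $\mf p$.

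For $\xi=\sigma^{\mb w,\mb w-e_2}\eta_{\mb w,\mf s}^{(1,-1)}$ the free degree of freedom is the hole at $\mb w=(0,0)$ in the leftmost column of $T^{\mf s}_{\mb w}$, with the extra (``foot'') particle at $(0,-1)$ just below it. A direct inspection of the energies shows that the vertical coordinate of the hole performs a symmetric nearest-neighbour random walk on $\{-1,0,1,\dots,n\}$ started at $0$, in which the interior moves leave the energy unchanged and the two boundary moves strictly lower it: reaching $-1$ (the foot fills the hole) returns the process to $\eta_{\mb w,\mf s}^{(1,-1)}\in\mc E^{\mf s,0}_{\mb w}$, and reaching $n$ (the whole left column has been pushed down by one unit) produces $\sigma^{\mb w_3+e_2,\mb w-e_2}\eta_{\mb w,\mf s}^{(1,-1)}$, a configuration which a bond count confirms has energy $\bb H_{\rm min}+1$. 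Both lie in $\bb H_{01}$, so the process stops there, and by the gambler's ruin formula for the symmetric walk on $\{-1,\dots,n\}$ started at $0$ the first event has probability $n/(n+1)$ and the second has probability $1/(n+1)$. The remaining configuration $\sigma^{\mb w_1-e_1,\mb w_1-e_1-e_2}\eta_{\mb w,\mf s}^{(n-3,-1)}$ is handled identically, with the leftmost column of $T^{\mf s}_{\mb w}$ replaced by the rightmost one. All remaining verifications — listing the rate one jumps available from each element of $\mc N(\mc E^{\mf s,0}_{\mb w})$ and checking the energies of the configurations reached — are routine bond counts.
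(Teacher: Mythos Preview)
Your proof is correct and follows precisely the approach the paper intends: it explicitly says ``The proof of the previous lemma is simpler than the one of Lemma~\ref{s10} and left to the reader'', and you have carried out that same rate-one-versus-rate-$e^{-\beta}$ analysis, reducing each case to the hitting distribution of an elementary random walk. Your observation that here the free motion lands directly in $\bb H_{01}$, so no coupled linear system as in Lemma~\ref{bs01} is needed, is exactly why the paper calls this case simpler.
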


The proof of the previous lemma is simpler than the one of Lemma
\ref{s10} and left to the reader. By symmetry, the distribution of
$\eta(H (\bb H_{01}\setminus\mc E^{\cdot,\cdot}_{\mb x}))$ can be
obtained from the one of $\eta(H (\bb H_{01}\setminus\mc E^{\mf
  s,0}_{\mb w}))$ or from the one of $\eta(H (\bb H_{01}\setminus\mc
E^{\mf s,1}_{\mb w}))$.

Define
\begin{equation*}
Z(\mc E^{\mf a,j}_{\mb x}) \;=\; \sum_{\xi\in \mc N(\mc E^{\mf a,j}_{\mb x})} 
\bb M (\xi, (\mc E^{\mf a,j}_{\mb x})^c) 
\;=\; \sum_{\xi\in \mc N(\mc E^{\mf a,j}_{\mb x})} 
\{ 1 - \bb M (\xi, \mc E^{\mf a,j}_{\mb x})\} \;.
\end{equation*}
By the previous lemma,
\begin{equation*}
Z(\mc E^{\mf s,0}_{\mb w}) 
\;=\; \frac 2{n+1} \;+\; \sum_{\mb y\in J_2} [1- \mf p(\mb y, 
\partial_0 T^{\mf s}_{\mb w}, \partial_+ T^{\mf s}_{\mb w})]\;.
\end{equation*}

\begin{proposition}
\label{s05}
Fix $0\le j\le 3$, $\mf a\in\{\mf l, \mf s\}$, $\mb x\in\Lambda_L$.
\begin{enumerate}
\item For every $\xi\in \mc E^{\mf a, j}_{\mb x}$, the triple $(\mc
  E^{\mf a, j}_{\mb x}, \mc E^{\mf a, j}_{\mb x}\cup \Delta_1, \xi)$
  is a valley of depth $\mu_K (\mc E^{\mf a, j}_{\mb
    x})/$ $\Cap_K(\mc E^{\mf a, j}_{\mb x}, [\mc E^{\mf a, j}_{\mb
    x}\cup \Delta_1]^c)$;

\item For any $\xi\in \mc E^{\mf a, j}_{\mb x}$, under $\mb
  P^\beta_{\xi}$, $H (\bb H_{01}\setminus\mc E^{\mf a, j}_{\mb x})/e^\beta$
  converges in distribution to an exponential random variable of
  parameter $Z(\mc E^{\mf a, j}_{\mb x})/|\mc E^{\mf a, j}_{\mb x}|$;

\item For any $\xi\in \mc E^{\mf a, j}_{\mb x}$, $\Pi \subset
  \bb H_{01}\setminus\mc E^{\mf a, j}_{\mb x}$, 
\begin{equation*}
\lim_{\beta\to\infty} \mb P^\beta_{\xi}
\big[ \eta (H (\bb H_{01}\setminus\mc E^{\mf a, j}_{\mb x})) \in \Pi \big]
\;=\; \frac 1{Z(\mc E^{\mf a, j}_{\mb x})} 
\sum_{\eta\in \mc N(\mc E^{\mf a, j}_{\mb x})} \bb M (\eta, \Pi)
\;=:\; Q(\mc E^{\mf a, j}_{\mb x}, \Pi)\;.
\end{equation*}
\end{enumerate}
\end{proposition}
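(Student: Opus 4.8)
The plan is to follow, step by step, the argument used for Proposition \ref{s09}, since $\mc E^{\mf a,j}_{\mb x}$ is again an equivalence class for the dynamics: the process moves among its $|\mc E^{\mf a,j}_{\mb x}|$ configurations through rate-one jumps of the extra particle sitting on the side $j$ of the rectangle $T^{\mf a}_{\mb x}$, whereas every jump leaving $\mc E^{\mf a,j}_{\mb x}$ has rate at most $e^{-\beta}$. Only Lemma \ref{s07}, which plays here the role of Lemma \ref{s10}, is model specific, and it is already at our disposal; everything else is an application of \cite[Theorems 2.6 and 2.7]{bl2}. For the first assertion I would invoke \cite[Theorem 2.6]{bl2}: condition (2.15) holds by the definition of $\Delta_1$, and for condition (2.14) one notes that any self-avoiding path joining two configurations of $\mc E^{\mf a,j}_{\mb x}$ inside this set keeps the energy equal to $\bb H_{\rm min}+1$, so that $G_K(\xi,\zeta)=e^{-\beta}\mu_K(\eta^{\mb w})$ for $\xi\neq\zeta$ in $\mc E^{\mf a,j}_{\mb x}$, while to leave $\mc E^{\mf a,j}_{\mb x}\cup\Delta_1$ one must detach a particle, forcing a passage through $\Delta_1$ and hence $G_K(\mc E^{\mf a,j}_{\mb x},[\mc E^{\mf a,j}_{\mb x}\cup\Delta_1]^c)\le e^{-2\beta}\mu_K(\eta^{\mb w})$; together with \eqref{17} these bounds give (2.14) and the stated depth.

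For the second assertion I would reproduce the recursive decomposition of $H(\bb H_{01}\setminus\mc E^{\mf a,j}_{\mb x})$ carried out in Proposition \ref{s09}, with $\tau_1$ the exit time of $\mc E^{\mf a,j}_{\mb x}$. First, introducing the auxiliary chain $\hat\eta^\beta_t$ obtained by reflecting the dynamics back into $\mc E^{\mf a,j}_{\mb x}$ from $\mc N(\mc E^{\mf a,j}_{\mb x})$ and applying \cite[Theorem 2.7]{bl2} (assumption (H1) being either empty or already checked above, (H0) a direct computation of mean rates), one gets that $(e^{-\beta}\tau_1,\eta^\beta_{\tau_1})$ converges to a pair of independent variables, the first exponential of parameter $|\mc N(\mc E^{\mf a,j}_{\mb x})|/|\mc E^{\mf a,j}_{\mb x}|$ and the second uniform on $\mc N(\mc E^{\mf a,j}_{\mb x})$. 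Second, since from every configuration of $\mc N(\mc E^{\mf a,j}_{\mb x})$ the set $\bb H_{01}$ is reached after a tight number of rate-one jumps — a free particle performing a random walk until it hits $\partial_+ T^{\mf a}_{\mb x}$, or a particle--hole pair running along a side until the hole returns or the particle escapes, as listed just before Lemma \ref{s07} — one may replace $e^{-\lambda_\beta H_{\bb H_{01}}}$ by $1$ at vanishing cost and solve the resulting fixed-point identity for the Laplace transform exactly as in Proposition \ref{s09}. This yields convergence of $H(\bb H_{01}\setminus\mc E^{\mf a,j}_{\mb x})/e^\beta$ to an exponential of parameter $(|\mc N(\mc E^{\mf a,j}_{\mb x})|/|\mc E^{\mf a,j}_{\mb x}|)\,\lim_{\beta\to\infty}\mb E^\beta_{\xi}[\mb P^\beta_{\eta_{\tau_1}}[H_{\bb H_{01}}\neq H_{\mc E^{\mf a,j}_{\mb x}}]]$, and, $\eta_{\tau_1}$ being asymptotically uniform on $\mc N(\mc E^{\mf a,j}_{\mb x})$, Lemma \ref{s07} identifies this parameter with $Z(\mc E^{\mf a,j}_{\mb x})/|\mc E^{\mf a,j}_{\mb x}|$. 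The third assertion is then verbatim the last paragraph of the proof of Proposition \ref{s09}: decomposing over the successive returns $\{H_k\}$ to $\bb H_{01}$, using the strong Markov property and the asymptotic uniformity of $\eta_{\tau_1}$ on $\mc N(\mc E^{\mf a,j}_{\mb x})$ together with Lemma \ref{s07}, one gets $\lim_{\beta\to\infty}\mb P^\beta_{\xi'}[\eta(H_1)\in\Pi']=|\mc N(\mc E^{\mf a,j}_{\mb x})|^{-1}\sum_{\eta\in\mc N(\mc E^{\mf a,j}_{\mb x})}\bb M(\eta,\Pi')=:q(\Pi')$, and then $\lim_{\beta\to\infty}\mb P^\beta_{\xi}[\eta(H(\bb H_{01}\setminus\mc E^{\mf a,j}_{\mb x}))\in\Pi]=q(\Pi)/(1-q(\mc E^{\mf a,j}_{\mb x}))=Q(\mc E^{\mf a,j}_{\mb x},\Pi)$.

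The only point that genuinely requires care — and essentially the sole place the geometry enters beyond Lemma \ref{s07} — is the tightness statement $\lim_{A\to\infty}\lim_{\beta\to\infty}\max_{\zeta\in\mc N(\mc E^{\mf a,j}_{\mb x})}\mb P^\beta_\zeta[H_{\bb H_{01}}>A]=0$: one must run through the finitely many types of configurations in $\mc N(\mc E^{\mf a,j}_{\mb x})$ (the $n+3$ of $\mc N(\mc E^{\mf s,0}_{\mb w})$ and their analogues for the other sides and for $\mf a=\mf l$) and check that each relaxes to $\bb H_{01}$ through $O(1)$ rate-one moves before any rate-$e^{-\beta}$ jump occurs. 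This is precisely what Lemma \ref{s07} records, so no genuinely new obstacle appears; the remainder is bookkeeping identical to that of Proposition \ref{s09}, together with the observation that, by symmetry, it suffices to treat the two representative cases $\mc E^{\mf s,0}_{\mb w}$ and $\mc E^{\mf s,1}_{\mb w}$.
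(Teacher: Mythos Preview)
Your proposal is correct and follows precisely the approach the paper indicates: the paper's own proof consists of a single sentence stating that the argument is similar to that of Proposition \ref{s09} with $\tau_1$ defined as the first time the process leaves $\mc E^{\mf a,j}_{\mb x}$, and you have faithfully unpacked exactly this, invoking \cite[Theorem 2.6]{bl2} for part (1), the coupling/auxiliary-chain argument and \cite[Theorem 2.7]{bl2} for the asymptotic independence of $(e^{-\beta}\tau_1,\eta_{\tau_1})$ in part (2), and the geometric-series decomposition over successive returns for part (3), with Lemma \ref{s07} replacing Lemma \ref{s10} throughout.
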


The proof of this proposition is similar to the one of Proposition
\ref{s09}, with $\tau_1$ defined as the first time the process leaves
the set $\mc E^{\mf a, j}_{\mb x}$. Remark \eqref{19} concerning the
explicit formula for the depth of the valley appearing in the first
statement of Proposition \ref{s05} also holds.

For $\mf a \in\{ \mf s, \mf l\}$, $\mb x\in\Lambda_L$, $0\le j\le 3$,
let $\mc E^{\mf a,j}_{\mb x}$
\begin{equation*}
\mb R(\mc E^{\mf a,j}_{\mb x}, \Pi) \;=\; 
Z(\mc E^{\mf a,j}_{\mb x})\, Q(\mc E^{\mf a,j}_{\mb x}, \Pi)
\;=\; \sum_{\eta\in \mc N(\mc E^{\mf a, j}_{\mb x})} \bb M (\eta, \Pi) 
\;, \quad \Pi\subset \bb H_{01}\;. 
\end{equation*}

It follows from the previous two results that starting from a
configuration $\zeta\in \Omega^3$ the process $\eta^\beta_t$ reaches
$\bb H_{01}$ only in a configuration of $\Omega^2 \cup \Omega^3$ or in
a configuration in which all sites of a $(n-3)\times n$ rectangle are
occupied with $3n$ extra particles attached to the boundary. This is
the last set of configurations which needs to be examined.
 
\subsection{The valleys $\{\zeta_{\mb x}^{\mf a, (\mb
  k,\bs \ell)}\}$}
\label{ss4}

The arguments of this subsection are similar to the ones of Subsection
\ref{ss2}. Let $R^{2,\mf l}$, $R^{2,\mf s}$ be the rectangles
$R^{2,\mf l} =\{1,\dots, n\}\times\{1, \dots, n-3\}$, $R^{2,\mf s} =
\{1, \dots, n-3\} \times \{1,\dots, n\}$. Let $n_0^{2,\mf s} =
n_2^{2,\mf s} = n-3$, $n_1^{2,\mf s} = n_3^{2,\mf s} = n$ be the
length of the sides of the standing rectangle $R^{2,\mf
  s}$. Similarly, denote by $n_i^{2,\mf l}$, $0\le i\le 3$, the length
of the sides of the lying rectangle $R^{2,\mf l}$: $n_i^{2,\mf l} =
n_{i+1}^{2,\mf s}$, where the sum over the index $i$ is performed
modulo $4$.

Denote by $\bb I_{2,\mf a}$, $\mf a \in \{\mf s ,\mf l\}$, the set of
pairs $(\mb k, \bs \ell)$ such that
\begin{itemize}
\item $0\le k_i \le \ell_i \le n_i^{2,\mf a}$,
\item If $k_j=0$, then $\ell_{j-1}= n_{j-1}^{2,\mf a}$.
\end{itemize}
For $(\mb k,\bs \ell) \in \bb I_{2,\mf a}$, $\mf a \in\{ \mf s, \mf
l\}$, let $R^{2,\mf l}(\mb k,\bs \ell)$, $R^{2,\mf s}(\mb k,\bs \ell)$
be the sets
\begin{equation*}
\begin{split}
R^{2,\mf l} (\mb k,\bs \ell)\; & =\;
R^{2,\mf l} \; \cup\; \{(a,0) : k_0\le a\le \ell_0\} \; \cup\;
\{(n+1,b) : k_1\le b\le \ell_1\}\;\cup \\
& \cup\; \{(n+1-a,n-2) : k_2\le a\le \ell_2\}
\;\cup\; \{(0,n-2-b) : k_3\le b\le \ell_3\}\;, \\
R^{2,\mf s} (\mb k,\bs \ell)  \;& =\;
R^{2,\mf s} \; \cup\; \{(a,0) : k_0\le a\le \ell_0\} \; \cup\;
\{(n-2,b) : k_1\le b\le \ell_1\} \;\cup \\
& \cup\; \{(n-2-a,n+1) : k_2\le a\le \ell_2\}
\;\cup\; \{(0,n+1-b) : k_3\le b\le \ell_3\} \;.
\end{split}
\end{equation*}

Denote by $I_{2,\mf a}$, $\mf a \in \{ \mf s, \mf l\}$, the set of
pairs $(\mb k, \bs \ell)\in \bb I_{2,\mf a}$ such that $|R^{2,\mf a}
(\mb k,\bs \ell)|=n^2$. For $(\mb k,\bs \ell) \in I_{2,\mf a}$, denote
by $M^{2,\mf a}_i(\mb k,\bs \ell)$ the number of particles attached to
the side $i$ of the rectangle $R^{2,\mf a} (\mb k,\bs \ell)$:
\begin{equation*}
M^{2,\mf a}_i(\mb k,\bs \ell) \;=\; 
\begin{cases}
\ell_i-k_i+1 & \text{if $k_{i+1}\ge 1\;,$}\\
\ell_i-k_i+2 & \text{if $k_{i+1}=0$}\ .
\end{cases}
\end{equation*}
Clearly, for $(\mb k, \bs \ell)\in I_{2,\mf a}$, $\sum_{0\le i\le 3}
M^{2,\mf a}_i(\mb k,\bs \ell) = 3n + A$, where $A$ is the number of
occupied corners, which are counted twice since they are attached to
two sides.

Denote by $I^*_{2,\mf a} \subset I_{2,\mf a}$, the set of pairs $(\mb
k, \bs \ell)\in I_{2,\mf a}$ whose rectangles $R^{2,\mf a} (\mb k,\bs
\ell)$ have at least two particles on each side: $M^{2,\mf a}_i(\mb
k,\bs \ell)\ge 2$, $0\le i\le 3$. Note that if $(\mb k, \bs \ell)$
belongs to $I^*_{2,\mf a}$, for all $\mb x\in R^{2,\mf a} (\mb k,\bs
\ell)$, there exist $\mb y$, $\mb z \in R^{2,\mf a} (\mb k,\bs \ell)$,
$\mb y\not = \mb z$, with the property $\Vert \mb x-\mb y\Vert =\Vert \mb x-\mb
z\Vert =1$.

For $(\mb k,\bs \ell) \in I_{2,\mf a}$, $\mf a \in\{ \mf s, \mf l\}$,
$\mb x\in \Lambda_L$, let $R^{2,\mf a}_{\mb x} (\mb k,\bs \ell) = \mb x
+ R^{2,\mf a} (\mb k,\bs \ell)$, and let $\zeta_{\mb x}^{\mf a, (\mb
  k,\bs \ell)}$ represent the configurations defined by
\begin{equation*}
\text{$\zeta_{\mb x}^{\mf a, (\mb k,\bs \ell)} (a,b) = 1$
    if and only if $(a,b)\in R_{\mb x}^{2,\mf a}(\mb k,\bs \ell)$}\;.
\end{equation*}
The configurations $\zeta_{\mb x}^{\mf a, (\mb k,\bs \ell)}$, $(\mb
k,\bs \ell) \in I_{2,\mf a}$, have at least four particles attached to
the longer side, and the configurations $\zeta_{\mb x}^{\mf a, (\mb
  k,\bs \ell)}$, $(\mb k,\bs \ell) \in I_{2,\mf a} \setminus
I^*_{2,\mf a}$, belong to $\Omega^3$, forming a $(n-1)\times(n+1)$
rectangle of particles with one extra particle attached to a side of
length $n-1$.  Let $\Omega^4 = \Omega^4_{L,K}$, be the set of
configurations associated to the pairs $(\mb k,\bs \ell)$ in
$I^*_{2,\mf a}$:
\begin{equation*}
\Omega^4_{\mb x} \;=\;  \{ \zeta_{\mb x}^{\mf a, (\mb k,\bs \ell)} :
\mf a \in\{ \mf s, \mf l\} \,,\, (\mb k,\bs \ell) \in
I^*_{2,\mf a} \} \;, \quad
\Omega^4 \;=\;  \bigcup_{\mb x\in\Lambda_L} \Omega^4_{\mb x} \;.
\end{equation*}

\begin{figure}[!h]
\begin{center}
\includegraphics[scale =0.5]{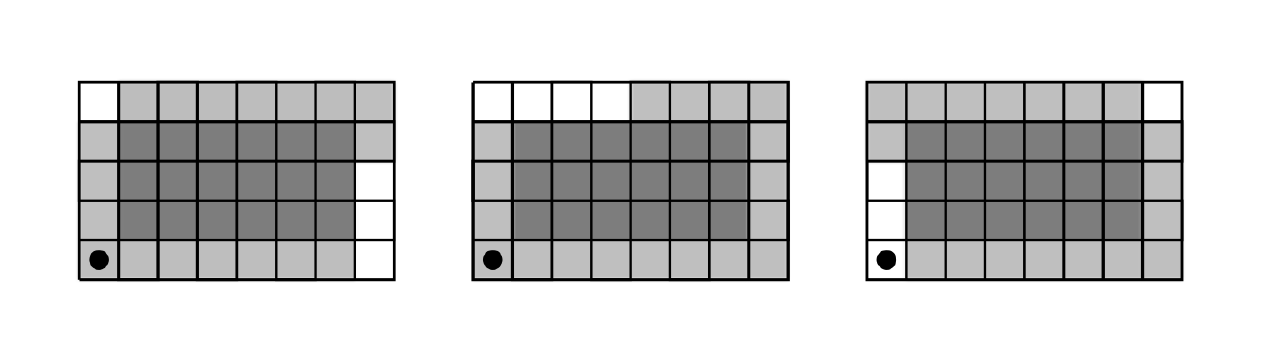}
\end{center}
\caption{Examples of configurations in $\Omega^4_{\mb x}$ for
  $n=6$. In general $3n$ particles (or $n-2$ holes) have to be placed
  around the rectangle, respecting the constraints introduced above.
  The black dot represents the site $\mb x$.}
\end{figure}

We now describe the configurations which can be attained from a
configuration in $\Omega^4$.  Denote by $I_{2,\mf a,i}^\pm$, $\mf a\in
\{\mf s, \mf l\}$, $0\le i\le 3$, the subset of $I_{2,\mf a}^*$
defined by
\begin{equation*}
\begin{split}
& I_{2,\mf a,i}^- \;=\; \{(\mb k, \bs \ell)\in I^*_{2, \mf a} : 
k_i \ge 2 \text{ or } k_i=1 \,,\, \ell_{i-1} = n_{i-1}^{2,\mf a}\} \; , 
\\
&\quad
I_{2,\mf a,i}^+ \;=\; \{(\mb k, \bs \ell)\in I^*_{2, \mf a} : 
\ell_i \le   n_{i}^{2,\mf a}-1 \text{ or } \ell_i =  n_{i}^{2, \mf a}
\,,\, k_{i+1} = 1 \} \;.
\end{split}
\end{equation*}

For $(\mb k, \bs \ell)\in I_{2, \mf a,i}^-$, denote by $\hat
T^-_{2,\mf a,i} \zeta_{\mb x}^{\mf a, (\mb k,\bs \ell)}$ the
configuration obtained from $\zeta_{\mb x}^{\mf a, (\mb k,\bs \ell)}$
by moving the particle sitting at $k_i$ to $k_i-1$. As in Subsection
\ref{ss2}, the abuse of notation is clear. Similarly, for $(\mb k, \bs
\ell)\in I_{2,\mf a,i}^+$, denote by $\hat T^+_{2,\mf a,i} \zeta_{\mb
  x}^{\mf a, (\mb k,\bs \ell)}$ the configuration obtained from
$\zeta_{\mb x}^{\mf a, (\mb k,\bs \ell)}$ by moving the particle
sitting at $\ell_i$ to $\ell_i+1$.

Define the map $T^-_{2,\mf a,i} : I^-_{2,\mf a,i} \to I_{2, \mf a}$ by
\begin{equation*}
T^-_{2,\mf a,i} (\mb k, \bs \ell) \;=
\begin{cases}
(\mb k - \mf e_i , \bs \ell - \mf e_i ) & \text{if $k_{i+1}\ge 1$} \;,
\\
(\mb k - \mf e_i + \mf e_{i+1}, \bs \ell )& 
\text{if $k_{i+1}=0$}\;.
\end{cases}
\end{equation*}
The map $T^+_{2,\mf a,i} : I^+_{2,\mf a,i} \to I_{2,\mf a}$ is defined in an
analogous way.

The vector $T^\pm_{2,\mf a,i}(\mb k, \bs \ell)$ may not belong to
$I^*_{2,\mf a}$ when there are only two particles on one side of a
rectangle $R^{2,\mf a}$ and one of them is translated along another
side. Since there are at least four particles attached to the longer
sides of the rectangle, this may happen only in the shorter sides of
the rectangles. In this case the configuration associated to the
vector $T^\pm_{2,\mf a,i}(\mb k, \bs \ell)$ belongs to $\Omega^3$.

Fix a vector $(\mb k, \bs \ell) \in I^*_{2,\mf a}$ such that $M^{2,\mf
  a}_i(\mb k,\bs \ell)= 2$ for some $0\le i\le 3$. Denote by $J_{2,\mf
  a, i} (\mb k, \bs \ell)$ the interval over which the particles on
side $i$ may move:
\begin{equation*}
J_{2, \mf a, i} \;=\; J_{2, \mf a, i} (\mb k, \bs \ell)\;=\; 
\Big\{ 1 - \mb 1\{\ell_{i-1} = n^{2, \mf a}_{i-1}\} \,,\, 
\dots \,,\, n^{2, \mf a}_i + \mb 1\{k_{i+1}\le 1\} \Big\}\;,
\end{equation*}
and by $T^b_{2, \mf a, i} (\mb k, \bs \ell)$, $b, b+1 \in J_{2, \mf a,
  i}$, the vector obtained from $(\mb k, \bs \ell)$ by replacing the
occupied sites $(k_i, k_i+1)$ by $(b,b+1)$. Note that $T^b_{2,\mf a,
  i} (\mb k, \bs \ell)$ always belongs to $I^*_{2, \mf a}$, and that
we did not excluded the possibility that $b=k_i$ in which case
$T^b_{2, \mf a, i} (\mb k, \bs \ell) = (\mb k, \bs \ell)$.

Denote by $\mc N(\eta)$ the set of all configurations which can be
reached from $\eta\in\Omega^4$ by a rate $e^{-\beta}$ jump.

\begin{lemma}
\label{s12}
Fix $\eta\in\Omega^4$. For each $\xi\in \mc N(\eta)$, there exists a
probability measure $\bb M (\xi, \,\cdot\,)$ defined on $\bb H_{01}$
such that
\begin{equation*}
\lim_{\beta\to\infty} \mb P^\beta_{\xi}
\big[ \eta (H (\bb H_{01})) \in \Pi \big] \;=\; \bb M (\xi,\Pi)\;,
\quad \Pi\subset\bb H_{01}\;.
\end{equation*}
Moreover, for $0\le i\le 3$, $\mf a\in\{\mf s, \mf l\}$ and $(\mb k,
\bs \ell)\in I_{2, \mf a,i}^\pm$,
\begin{equation*}
\begin{split}
& \bb M (\hat T^\pm_{2, \mf a,i} \zeta_{\mb x}^{\mf a, (\mb k,\bs \ell)},
\zeta_{\mb x}^{\mf a, T^\pm_{2,\mf a,i} (\mb k,\bs  \ell)}) 
\;=\; \frac 1{M^{2,\mf a}_i(\mb k,\bs \ell)} \;, \\
& \quad
\bb M (\hat T^\pm_{2,\mf a,i} \zeta_{\mb x}^{\mf a, (\mb k,\bs \ell)},
\zeta_{\mb x}^{\mf a, (\mb k,\bs \ell)}) \;=\; 
\frac{M^{2,\mf a}_i(\mb k,\bs \ell)-1}{M^{2,\mf a}_i(\mb k,\bs \ell)} 
\end{split}
\end{equation*}
if $M^{2,\mf a}_i(\mb k,\bs \ell)\ge 3$; and 
\begin{equation*}
\begin{split}
& \bb M (\hat T^-_{2,\mf a,i} \zeta_{\mb x}^{\mf a, (\mb k,\bs \ell)},
\zeta_{\mb x}^{\mf a, T^b_{2,\mf a, i} (\mb k, \bs \ell)}) 
\;=\; \mf m (J_{2, \mf a, i} (\mb k, \bs \ell), k_i-1,b) \; ,  \\
& \quad \bb M (\hat T^+_{2, \mf a,i} \zeta_{\mb x}^{\mf a, (\mb k,\bs \ell)},
\zeta_{\mb x}^{\mf a, T^b_{2,\mf a, i} (\mb k, \bs \ell)}) 
\;=\; \mf m (J_{2,\mf a, i} (\mb k, \bs \ell), k_i,b) \;, 
\end{split}
\end{equation*}
for $b$, $b+1\in J_{2, \mf a, i} (\mb k, \bs \ell)$ if $M^{2,\mf
  a}_i(\mb k,\bs \ell)=2$. The probability measure $\mf m(J, a, c)$
has been introduced in \eqref{b13}.
\end{lemma}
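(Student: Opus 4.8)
The plan is to prove Lemma~\ref{s12} exactly along the lines of Lemma~\ref{s13}, since $\Omega^4$ differs from $\Omega^2$ only in the aspect ratio of the underlying rectangle; the same three ingredients suffice: (i) a soft argument reducing $\bb M(\xi,\cdot)$ to the absorption distribution of a finite, $\beta$-independent rate-one Markov chain; (ii) the identification of that chain with one of the elementary processes introduced at the beginning of Section~\ref{sec3}; and (iii) a gambler's-ruin, resp.\ a ``meeting-time'', computation for it.

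First I would establish the existence of $\bb M(\xi,\cdot)$ and reduce it to a finite computation. Fix $\eta=\zeta^{\mf a,(\mb k,\bs\ell)}_{\mb x}\in\Omega^4$, so $\eta\in\bb H_2$, and $\xi\in\mc N(\eta)\subset\bb H_3$. A standard estimate, as in the proofs of Lemmas~\ref{bs01} and \ref{s10}, shows that from $\xi$ the process hits $\bb H_{01}$ before any rate $e^{-\beta}$ move does, with probability tending to one, and that along this excursion it coincides with a finite-state, $\beta$-independent rate-one Markov chain absorbed on $\bb H_{01}$; every move that would raise the energy further, detach a second particle, or carry the defect far from the rectangle has rate at most $e^{-\beta}$ and is thus negligible. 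Consequently the limit $\bb M(\xi,\cdot)=\lim_{\beta\to\infty}\mb P^\beta_\xi[\eta(H_{\bb H_{01}})\in\cdot]$ exists and equals the absorption distribution of that chain. The ``spurious'' elements of $\mc N(\eta)$ --- those from which the unique available rate-one move returns at once to $\eta$, exactly as in the remark preceding Lemma~\ref{s13} --- contribute $\bb M(\xi,\eta)=1$ and do not appear in the displayed list.

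Next I would identify the finite chain in the two regimes. When $M^{2,\mf a}_i(\mb k,\bs\ell)\ge 3$ and $\xi=\hat T^{+}_{2,\mf a,i}\eta$ (the case $\hat T^{-}$ and the remaining sides being symmetric), the end particle of side $i$ has been pushed one unit out, leaving a block of $M^{2,\mf a}_i-1$ particles anchored at one end and a detached particle one step beyond; the rate-one dynamics lets the detached particle slide along the edge and the intervening hole migrate, and up to the reversible excursions in which the detached particle wanders further out, which do not bias the exit law, this is a symmetric nearest-neighbour random walk on $M^{2,\mf a}_i+1$ sites whose two absorbing endpoints are the row re-formed in its original position, namely $\eta$ (at lattice distance one from the start), and the row translated by one unit, namely $\zeta^{\mf a,T^{+}_{2,\mf a,i}(\mb k,\bs\ell)}_{\mb x}$ (at distance $M^{2,\mf a}_i-1$). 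The classical gambler's-ruin formula then gives $\bb M(\xi,\zeta^{\mf a,T^{+}_{2,\mf a,i}(\mb k,\bs\ell)}_{\mb x})=1/M^{2,\mf a}_i$ and $\bb M(\xi,\eta)=(M^{2,\mf a}_i-1)/M^{2,\mf a}_i$. When $M^{2,\mf a}_i(\mb k,\bs\ell)=2$ and $\xi=\hat T^{-}_{2,\mf a,i}\eta$ (resp.\ $\xi=\hat T^{+}_{2,\mf a,i}\eta$), side $i$ now carries two isolated particles, at positions $k_i-1$ and $k_i+1$ (resp.\ $k_i$ and $k_i+2$); each performs, independently and at rate one, a symmetric nearest-neighbour random walk reflected at the endpoints of $J_{2,\mf a,i}(\mb k,\bs\ell)$ until the first instant at which their distance equals one, at which point the configuration has reached $\zeta^{\mf a,T^b_{2,\mf a,i}(\mb k,\bs\ell)}_{\mb x}$, with $(b,b+1)$ the meeting pair. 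This is precisely the process defining $\mf m$ in \eqref{b13} with initial condition $(k_i-1,k_i+1)$ (resp.\ $(k_i,k_i+2)$), which gives the two displayed formulas.

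The step I expect to be the real work is the corner bookkeeping. One has to check that the endpoints of $J_{2,\mf a,i}$, encoded by the indicators $\mb 1\{\ell_{i-1}=n^{2,\mf a}_{i-1}\}$ and $\mb 1\{k_{i+1}\le 1\}$, are exactly the limits experienced by the walking particle, resp.\ the migrating hole, when it reaches a corner whose adjacent side is, or is not, full; that when the translation of a row forces a particle around a corner the resulting absorbing configuration is indeed the one attached to $T^{\pm}_{2,\mf a,i}(\mb k,\bs\ell)$, which may lie in $\Omega^3$ rather than $\Omega^4$ (as noted in the discussion preceding the lemma); and that the hypothesis $n>3$ excludes the degeneracies --- a side with fewer than two sites, a meeting pair lying outside $J_{2,\mf a,i}$, or $T^b_{2,\mf a,i}(\mb k,\bs\ell)\notin I^*_{2,\mf a}$ --- so that the enumeration of $\mc N(\eta)$ remains finite and each of its elements falls under one of the two regimes above. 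Since on the longer sides one always has $M^{2,\mf a}_i\ge 4$, the delicate case $M^{2,\mf a}_i=2$ can occur only on a shorter side, which keeps the case analysis bounded; modulo this bookkeeping, the proof is identical to that of Lemma~\ref{s13}.
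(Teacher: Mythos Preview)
Your proposal is correct and follows precisely the approach the paper intends: the paper gives no separate proof of Lemma~\ref{s12}, treating it as the direct analogue of Lemma~\ref{s13}, whose proof is itself declared ``straightforward and left to the reader'' with only the hint that one must identify all configurations reachable by rate-$1$ jumps from $\mc N(\eta)$. Your reduction to a $\beta$-independent rate-one chain, the gambler's-ruin computation for $M_i^{2,\mf a}\ge 3$ (with the excursions of the detached particle correctly factored out), the identification with the process $\mf m$ of \eqref{b13} for $M_i^{2,\mf a}=2$, and the corner bookkeeping you flag are exactly the details the paper omits.
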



Let
\begin{equation*}
Z(\zeta_{\mb x}^{\mf a, (\mb k,\bs \ell)}) \; =\; 
\sum_{\xi \in \mc N (\zeta_{\mb x}^{\mf a, (\mb k,\bs \ell)})}
\bb M(\xi, \{\zeta_{\mb x}^{\mf s, (\mb k,\bs \ell)}\}^c) \; =\; 
\sum_{\xi \in \mc N (\zeta_{\mb x}^{\mf a, (\mb k,\bs \ell)})}
\{1 - \bb M(\xi, \zeta_{\mb x}^{\mf a, (\mb k,\bs \ell)})\} \;.
\end{equation*}

\begin{proposition}
\label{s14}
Fix $\mb x\in \Lambda_L$, $\mf a\in \{\mf l, \mf s\}$, $(\mb k,\bs
\ell) \in I^*_{2, \mf a}$. Then,
\begin{enumerate}
\item The triple $(\{\zeta_{\mb x}^{\mf a, (\mb k,\bs \ell)}\},
  \{\zeta_{\mb x}^{\mf a, (\mb k,\bs \ell)}\}\cup \Delta_1, \zeta_{\mb
    x}^{\mf a, (\mb k,\bs \ell)})$ is a valley of depth $\mu_K
  (\zeta_{\mb x}^{\mf a, (\mb k,\bs \ell)})/$ $\Cap_K(\{\zeta_{\mb
    x}^{\mf a, (\mb k,\bs \ell)}\}, [\{\zeta_{\mb x}^{\mf a, (\mb
    k,\bs \ell)}\}\cup \Delta_1]^c)$;

\item Under $\mb P^\beta_{\zeta_{\mb x}^{\mf a, (\mb k,\bs \ell)}}$,
  $H (\bb H_{01}\setminus \{\zeta_{\mb x}^{\mf a, (\mb k,\bs
    \ell)}\})/e^\beta$ converges in distribution to an exponential
  random variable of parameter $Z(\zeta_{\mb x}^{\mf a,
    (\mb k,\bs \ell)})$;

\item For any $\Pi \subset \bb H_{01}\setminus\{\zeta_{\mb x}^{\mf a, (\mb
    k,\bs \ell)}\}$,
\begin{equation*}
\begin{split}
& \lim_{\beta\to\infty} \mb P^\beta_{\zeta_{\mb x}^{\mf a, (\mb k,\bs
    \ell)}}
\big[ \eta (H (\bb H_{01}\setminus\{\zeta_{\mb x}^{\mf a, (\mb
    k,\bs \ell)}\})) \in \Pi \big] \\
&\qquad =\; \frac 1{Z(\zeta_{\mb x}^{\mf a, (\mb k,\bs \ell)})} 
\sum_{\xi \in \mc N(\zeta_{\mb x}^{\mf a, (\mb k,\bs \ell)})} 
\bb M(\xi,\Pi)\;=:\; Q(\zeta_{\mb x}^{\mf a, (\mb  k,\bs \ell)}, \Pi)\;.
\end{split}
\end{equation*}
\end{enumerate}
\end{proposition}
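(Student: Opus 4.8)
The plan is to follow the proof of Proposition \ref{s06} almost verbatim, replacing the configurations $\eta_{\mb x}^{\mf a, (\mb k,\bs \ell)}$ by $\zeta_{\mb x}^{\mf a, (\mb k,\bs \ell)}$ and Lemma \ref{s13} by Lemma \ref{s12}. Throughout, write $\zeta = \zeta_{\mb x}^{\mf a, (\mb k,\bs \ell)}$ and $\mc N = \mc N(\zeta)$. For the first assertion I would invoke \cite[Theorem 2.6]{bl2} with $\mc W=\{\zeta\}$ and $\mc B=\{\zeta\}\cup\Delta_1$: since $\mc W$ is a singleton, assumption (2.14) holds by default, while assumption (2.15) follows from \eqref{17} and the definition \eqref{04} of $\Delta_1$ exactly as in Propositions \ref{s09} and \ref{s06}. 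This yields that the triple is a valley, with depth $\mu_K(\zeta)/\Cap_K(\{\zeta\},[\{\zeta\}\cup\Delta_1]^c)$.

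For the second assertion I would take $\tau_1$ to be the time of the first jump out of $\zeta$. By the Markov property $\tau_1$ and $\eta^\beta_{\tau_1}$ are independent; since every jump out of $\zeta$ has rate $e^{-\beta}$, $e^{-\beta}\tau_1$ converges to an exponential variable of parameter $|\mc N|$ and $\eta^\beta_{\tau_1}$ converges to the uniform law on $\mc N$. Each configuration of $\mc N$ sits in an equivalence class whose associated auxiliary walk (the processes $\mb z_t$, $\mb y_t$ or the coupled pair $(\mb u_t,\mb v_t)$ entering Lemma \ref{s12}) reaches $\bb H_{01}$ after a tight number of rate-one jumps, so $\lim_{A\to\infty}\lim_{\beta\to\infty}\max_{\xi\in\mc N}\mb P^\beta_\xi[H_{\bb H_{01}}>A]=0$. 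Running the renewal decomposition of the proof of Proposition \ref{s09} — splitting $H(\bb H_{01}\setminus\{\zeta\})$ at the successive returns to $\bb H_{01}$ and replacing, at a vanishing cost, $e^{-\lambda_\beta H_{\bb H_{01}}}$ by $1$ — one obtains that $H(\bb H_{01}\setminus\{\zeta\})/e^\beta$ converges to an exponential variable of parameter $\gamma=\lim_{\beta\to\infty}\sum_{\xi\in\mc N}\mb P^\beta_\xi[H_{\bb H_{01}}\neq H_\zeta]$. By Lemma \ref{s12} each summand tends to $1-\bb M(\xi,\zeta)$, whence $\gamma=\sum_{\xi\in\mc N}\{1-\bb M(\xi,\zeta)\}=Z(\zeta)$ by the definition of $Z(\zeta)$.

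The third assertion I would prove exactly as the third claim of Proposition \ref{s09}. Set $q(\Pi)=|\mc N|^{-1}\sum_{\xi\in\mc N}\bb M(\xi,\Pi)$, which is the $\beta\to\infty$ limit of $\mb P^\beta_\zeta[\eta(H^+_{\bb H_{01}})\in\Pi]$ by the uniform limit of $\eta^\beta_{\tau_1}$ together with Lemma \ref{s12}. Decomposing over the number of returns to $\bb H_{01}$ that land in $\{\zeta\}$ before hitting $\bb H_{01}\setminus\{\zeta\}$ and applying the strong Markov property gives, for $\Pi\subset\bb H_{01}\setminus\{\zeta\}$,
\begin{equation*}
\lim_{\beta\to\infty}\mb P^\beta_\zeta\big[\eta(H(\bb H_{01}\setminus\{\zeta\}))\in\Pi\big]
\;=\;\frac{q(\Pi)}{1-q(\{\zeta\})}\;=\;\frac 1{Z(\zeta)}\sum_{\xi\in\mc N}\bb M(\xi,\Pi)\;,
\end{equation*}
where the last equality uses $|\mc N|\,(1-q(\{\zeta\}))=\sum_{\xi\in\mc N}\{1-\bb M(\xi,\zeta)\}=Z(\zeta)$.

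The main obstacle is not in this proposition itself but in Lemma \ref{s12}, on which it rests: one must correctly identify all configurations reachable by rate-one jumps from the configurations of $\mc N(\zeta)$, keeping track of the cases where a side of the rectangle $R^{2,\mf a}(\mb k,\bs\ell)$ carries only two particles and one of them is translated along a neighbouring side — a move which, on the short sides of the rectangle, drops the configuration into $\Omega^3$ rather than keeping it in $\Omega^4$; this bookkeeping, and the verification that the maps $T^\pm_{2,\mf a,i}$ and $T^b_{2,\mf a,i}$ are well defined (using $n>3$), is where the real work lies. Once Lemma \ref{s12} is granted, Proposition \ref{s14} is a mechanical repetition of Proposition \ref{s06}.
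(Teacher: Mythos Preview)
Your proposal is correct and follows exactly the approach the paper intends: the paper in fact gives no separate proof of Proposition~\ref{s14}, treating it as a verbatim repetition of the proof of Proposition~\ref{s06} with Lemma~\ref{s12} in place of Lemma~\ref{s13}. Your write-up matches that template step for step, including the choice of $\tau_1$ as the first jump time and the renewal decomposition borrowed from Proposition~\ref{s09}; the only imprecision is the phrase ``every jump out of $\zeta$ has rate $e^{-\beta}$'', which should read ``every jump has rate at most $e^{-\beta}$, and the rate-$e^{-\beta}$ jumps are exactly those landing in $\mc N$'', but this does not affect the argument.
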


For $\eta\in\Omega_4$, let
\begin{equation*}
\mb R(\eta, \Pi) \;=\; Z(\eta)\, Q(\eta, \Pi) \;=\; 
\sum_{\xi \in \mc N(\eta)} \bb M (\xi,\Pi) \;, \quad
\Pi\subset \bb H_{01}\;. 
\end{equation*}

\section{Tunneling behavior among shallow valleys}
\label{sec4}

We examine in this section the evolution of the Markov process
$\{\eta^\beta_t : t\ge 0\}$ in the time scale $e^\beta$ among the
shallow valleys introduced in the previous section. We first introduce
a family of deep valleys or traps.

\begin{lemma}
\label{s19}
Fix $\mb x\in \Lambda_L$. The triple $(\{\eta^{\mb x}\}, \{\eta^{\mb
  x}\}\cup \Delta_1, \eta^{\mb x})$ is a valley of depth $\mu_K
(\eta^{\mb x})/\Cap_K( \{\eta^{\mb x}\}, [\{\eta^{\mb x}\} \cup
\Delta_1]^c)$.
\end{lemma}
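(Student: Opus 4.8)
The statement is the analogue, for the deep trap $\{\eta^{\mb x}\}$, of the first assertion of Propositions \ref{s09}, \ref{s06}, \ref{s05} and \ref{s14}, and the plan is to prove it exactly the same way: by invoking \cite[Theorem 2.6]{bl2} with $\mc W = \{\eta^{\mb x}\}$ and $\mc B = \{\eta^{\mb x}\}\cup\Delta_1$, so that $\mc B^c = \bb H_{01}\setminus\{\eta^{\mb x}\}$. The first condition in the definition of a valley (Section 2) is vacuous here, since $\mc W$ is a singleton and $\eta^{\mb x}\in\mc W$, so there is nothing to check for $H_{\mc B^c} < H_{\eta^{\mb x}}$. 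Thus one only needs to verify the hypotheses (2.14) and (2.15) of \cite[Theorem 2.6]{bl2}, and then the theorem delivers both the exponential law for $H_{\mc B^c}/m_\beta$ and the negligibility of the time spent in $\mc B\setminus\mc W$, with depth $m_\beta = \mu_K(\eta^{\mb x})/\Cap_K(\{\eta^{\mb x}\},[\{\eta^{\mb x}\}\cup\Delta_1]^c)$, which is precisely the asserted depth.

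First I would check (2.15), which is a separation condition on the energy landscape: it holds by the very definition of $\Delta_1$ in \eqref{04}, exactly as noted in the proofs of Propositions \ref{s06} and \ref{s05}. Indeed $\{\bb H_{01},\Delta_1\}$ partitions $\Omega_{L,K}$, and $\Delta_1 = \{\eta : \bb H(\eta) > \bb H_{\rm min}+1\}$ is separated in energy from $\bb H_{01}$, which is what (2.15) requires. Next I would check (2.14), the capacity/saddle-height condition. Here the key input is the estimate \eqref{17} together with the saddle-measure computation from the Sketch: any self-avoiding path leaving $\eta^{\mb x}$ must pass through a configuration of measure at most $e^{-2\beta}\mu_K(\eta^{\mb x})$ (the neighbours in $\mc N(\eta^{\mb x})$ are reached by rate $e^{-2\beta}$ jumps), so $G_K(\{\eta^{\mb x}\}, [\{\eta^{\mb x}\}\cup\Delta_1]^c) \le e^{-2\beta}\mu_K(\eta^{\mb x})$, while $G_K(\eta^{\mb x},\zeta) = e^{-2\beta}\mu_K(\eta^{\mb x})$ for $\zeta\in\mc N(\eta^{\mb x})\cap\bb H_{01}$, say any $\zeta\in\mc E^{i,j}_{\mb x}$; combining with \eqref{17} gives that $\Cap_K(\{\eta^{\mb x}\}, [\{\eta^{\mb x}\}\cup\Delta_1]^c)$ is of order $e^{-2\beta}\mu_K(\eta^{\mb x})$, uniformly, and more precisely comparable to the capacities $\Cap_K(\mc E^{i,j}_{\mb x}, \cdot)$, which is the form in which (2.14) is stated. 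This mirrors verbatim the capacity estimates already carried out in the proof of Proposition \ref{s09}.

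The proof therefore consists of citing \cite[Theorem 2.6]{bl2} and verifying its two hypotheses, both of which reduce to facts already established: (2.15) to the definition of $\Delta_1$, and (2.14) to the saddle-measure identity $G_K(\{\eta^{\mb x}\},\cdot) \asymp e^{-2\beta}\mu_K(\eta^{\mb x})$ combined with \eqref{17}. I do not anticipate a genuine obstacle; the only mildly delicate point is making sure that (2.14) is checked in the precise normalized form demanded by \cite{bl2} — i.e.\ that the relevant capacity ratios stay bounded away from $0$ and $\infty$ as $\beta\uparrow\infty$ — but this follows immediately from \eqref{17} applied to the pair $(\{\eta^{\mb x}\}, [\{\eta^{\mb x}\}\cup\Delta_1]^c)$ and the elementary path construction sketched in Section 3. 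Hence the proof can reasonably be left to the reader with the remark that it is identical to (indeed simpler than) that of Proposition \ref{s09}, since $\mc W$ is now a singleton.
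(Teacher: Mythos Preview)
Your proposal is correct and follows exactly the paper's approach: the paper's entire proof is the single sentence ``This result follows from \cite[Theorem 2.6]{bl2}.'' One small simplification you could make explicit (and which you hint at in your last line): since $\mc W=\{\eta^{\mb x}\}$ is a singleton, condition (2.14) is vacuous --- as the paper itself remarks in the proof of Proposition~\ref{s06} (``Assumption (2.14) is fulfilled by default'') --- so the capacity/saddle-height computation you sketch is not actually needed here.
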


This result follows from \cite[Theorem 2.6]{bl2}. Up to this point, we
introduced five types of disjoint subsets of $\Omega_{L,K}$:
\begin{itemize}
\item $\{\eta^{\mb x}\}$, $\mb x\in\Lambda_L$;
\item $\mc E^{i,j}_{\mb x}$, $0\le i,j\le 3$, $\mb x\in\Lambda_L$;
\item $\{\eta^{\mf a, (\mb k, \bs \ell)}_{\mb x}\}$, $\mb
  x\in\Lambda_L$, $\mf a\in\{\mf l, \mf s\}$, $(\mb k, \bs \ell)\in
  I^*_{\mf a}$;
\item $\mc E^{\mf a, i}_{\mb x}$, $\mf a\in\{\mf l, \mf s\}$, $0\le
  i\le 3$, $\mb x\in\Lambda_L$;
\item $\{\zeta^{\mf a, (\mb k, \bs \ell)}_{\mb x}\}$, $\mb
  x\in\Lambda_L$, $\mf a\in\{\mf l, \mf s\}$, $(\mb k, \bs \ell)\in
  I^*_{2,\mf a}$.
\end{itemize}
Denote by $\mc E_1, \dots, \mc E_\kappa$ an enumeration of these
sets. In this enumeration we shall assume that $\mc E_1 = \{\eta^{\mb
  w}\}$ and that the first $|\Lambda_L|$ sets correspond to the square
configurations: for $1\le i\le |\Lambda_L|$, $\mc E_i = \{\eta^{\mb
  x_i}\}$ for some $\mb x_i\in\Lambda_L$. Some sets $\mc E_j$ are
singletons, as the first $|\Lambda_L|$ sets, and some are not, as the
set $\mc E_{|\Lambda_L|+1} = \mc E^{0,0}_{\mb w}$. Let $\mc E =
\cup_{1\le j\le \kappa} \mc E_j$ be the union of all subsets and let
$\check {\mc E}_j = \cup_{i\not = j} \mc E_i$. For $1\le i\le
|\Lambda_L|$, we sometimes denote $\mc E_i= \{\eta^{\mb x}\}$ by
$\mc E_{\mb x}$.

Let $\Delta^*_1 = \Delta_1 \cup [\bb H_1 \setminus \mc E]$. Fix a
configuration $\xi_i$ in each set $\mc E_i$, $1\le i\le \kappa$. We
proved above and in the previous section that the triples $(\mc E_i,
\mc E_i\cup \Delta_1, \xi_i)$ are valleys. The next result states that
we may increase $\Delta_1$ to $\Delta^*_1$.

\begin{lemma}
\label{s17}
The triples $(\mc E_i, \mc E_i \cup \Delta^*_1, \xi_i)$,
$|\Lambda_L|<i\le \kappa$, are valleys of depth $e^\beta |\mc
E_i|/Z(\mc E_i)$. Moreover, for every $|\Lambda_L|< i \le \kappa$,
$1\le j \not = i \le \kappa$, $\xi\in \mc E_i$,
\begin{equation*}
\lim_{\beta\to\infty} \mb P^\beta_\xi \big[ H(\check{\mc E}_i) 
= H(\mc E_j) \big] \;=\; Q(\mc E_i,\mc E_j)\;.
\end{equation*}
\end{lemma}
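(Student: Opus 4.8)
The plan is to transfer the valley structure already established for the set $\mc E_i\cup\Delta_1$ in Propositions \ref{s09}, \ref{s06}, \ref{s05}, \ref{s14} to the larger set $\mc E_i\cup\Delta^*_1$, using the fact proved in Section \ref{sec3} that at low temperature the process returns to $\bb H_{01}$ inside $\mc E=\cup_j\mc E_j$ and never inside $\bb H_1\setminus\mc E$. First I would record the set identity $[\mc E_i\cup\Delta^*_1]^c=\check{\mc E}_i$ for $|\Lambda_L|<i\le\kappa$: since $\Delta_1=\Omega_{L,K}\setminus\bb H_{01}$ one has $[\mc E_i\cup\Delta_1]^c=\bb H_{01}\setminus\mc E_i$, and removing the further states $\bb H_1\setminus\mc E$ and using $\mc E\subset\bb H_{01}$ together with $\mc E_i\cap\bb H_0=\varnothing$ gives $[\mc E_i\cup\Delta^*_1]^c=(\bb H_{01}\setminus\mc E_i)\setminus(\bb H_1\setminus\mc E)=\mc E\setminus\mc E_i=\check{\mc E}_i$; moreover $\bb H_{01}\setminus\mc E_i$ is the disjoint union of $\check{\mc E}_i$ and $\bb H_1\setminus\mc E$.

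The second ingredient is the closure property of the return distribution. The explicit formulas for $\mb R(\mc E_i,\cdot)=Z(\mc E_i)\,Q(\mc E_i,\cdot)$ derived in Subsections \ref{ss1}--\ref{ss4}, together with the remarks closing those subsections (cf. also Remark \ref{bs1}), show that for $|\Lambda_L|<i\le\kappa$ the limiting hitting measure $Q(\mc E_i,\cdot)$ on $\bb H_{01}\setminus\mc E_i$ provided by part (3) of the proposition corresponding to the type of $\mc E_i$ is supported on $\mc E$; hence $Q(\mc E_i,\bb H_1\setminus\mc E)=0$ and, $Q(\mc E_i,\cdot)$ being a probability measure on $\bb H_{01}\setminus\mc E_i$, $Q(\mc E_i,\check{\mc E}_i)=1$. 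Consequently $\mb P^\beta_\xi[\eta(H(\bb H_{01}\setminus\mc E_i))\in\bb H_1\setminus\mc E]\to0$ and $\mb P^\beta_\xi[\eta(H(\bb H_{01}\setminus\mc E_i))\in\check{\mc E}_i]\to1$ for every $\xi\in\mc E_i$. Since $\check{\mc E}_i\subset\bb H_{01}\setminus\mc E_i$ forces $H(\bb H_{01}\setminus\mc E_i)\le H(\check{\mc E}_i)$ always, with equality (and $\eta(H(\bb H_{01}\setminus\mc E_i))=\eta(H(\check{\mc E}_i))$) precisely on the event $\{\eta(H(\bb H_{01}\setminus\mc E_i))\in\check{\mc E}_i\}$, I conclude that $H(\bb H_{01}\setminus\mc E_i)=H(\check{\mc E}_i)$ with probability tending to one.

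I would then read off the three valley conditions and the stated limit. Depth: by part (2) of the relevant proposition $H(\bb H_{01}\setminus\mc E_i)/e^\beta$ converges to an exponential of parameter $Z(\mc E_i)/|\mc E_i|$, so by the coincidence above $H(\check{\mc E}_i)/(e^\beta|\mc E_i|/Z(\mc E_i))$ converges to a mean one exponential, whence the depth $e^\beta|\mc E_i|/Z(\mc E_i)$, consistent with \eqref{19}. Attractor condition: $H(\check{\mc E}_i)\ge H([\mc E_i\cup\Delta_1]^c)$ gives $\mb P^\beta_\xi[H(\check{\mc E}_i)<H_{\xi_i}]\le\mb P^\beta_\xi[H([\mc E_i\cup\Delta_1]^c)<H_{\xi_i}]$, and the right side tends to $0$ uniformly over $\xi\in\mc E_i$ because $(\mc E_i,\mc E_i\cup\Delta_1,\xi_i)$ is a valley. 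Negligibility: on $[0,H(\bb H_{01}\setminus\mc E_i))$ the process stays in $\mc E_i\cup\Delta_1$, so there $\mb 1\{\eta_s\in\Delta^*_1\}=\mb 1\{\eta_s\in\Delta_1\}$; since $H(\check{\mc E}_i)=H(\bb H_{01}\setminus\mc E_i)$ with probability tending to one, the normalized occupation time of $\Delta^*_1$ up to $H(\check{\mc E}_i)$ coincides, off a vanishingly probable event, with the occupation time of $\Delta_1$ up to $H(\bb H_{01}\setminus\mc E_i)$, already known to be negligible (with the same depth). Finally, $\{H(\check{\mc E}_i)=H(\mc E_j)\}=\{\eta(H(\check{\mc E}_i))\in\mc E_j\}$ differs from $\{\eta(H(\bb H_{01}\setminus\mc E_i))\in\mc E_j\}$ only inside $\{\eta(H(\bb H_{01}\setminus\mc E_i))\in\bb H_1\setminus\mc E\}$, which has vanishing probability, so its probability converges to $\lim_\beta\mb P^\beta_\xi[\eta(H(\bb H_{01}\setminus\mc E_i))\in\mc E_j]=Q(\mc E_i,\mc E_j)$ by part (3).

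The only step requiring genuine work is the closure claim $Q(\mc E_i,\bb H_1\setminus\mc E)=0$: one must inspect the explicit expressions for $\mb R(\mc E_i,\cdot)$ in each of the four families of shallow valleys and verify that every configuration at which the process first returns to $\bb H_{01}$ belongs to one of the designated sets $\mc E_j$ --- precisely the content of the paragraphs following Propositions \ref{s09}, \ref{s06}, \ref{s05}, \ref{s14}. The remainder is routine: an elementary set computation, a Slutsky-type passage from $H(\bb H_{01}\setminus\mc E_i)$ to $H(\check{\mc E}_i)$, and inheritance of the three valley properties from the valley with $\Delta_1$ in place of $\Delta^*_1$.
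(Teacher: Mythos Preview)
Your proof is correct and follows essentially the same approach as the paper. The only difference is organizational: the paper isolates the ``enlarge the basin by a set the process never visits'' step into the general Lemma~\ref{s18} and then invokes it together with the fact (which you call the closure claim) that $\mb P^\beta_\xi[\eta(H(\bb H_{01}\setminus\mc E_i))\in\check{\mc E}_i]\to1$, whereas you verify the three valley conditions directly; the substance---the set identity $[\mc E_i\cup\Delta^*_1]^c=\check{\mc E}_i$, the closure property $Q(\mc E_i,\bb H_1\setminus\mc E)=0$ drawn from the explicit formulas in Section~\ref{sec3}, and the coincidence $H(\bb H_{01}\setminus\mc E_i)=H(\check{\mc E}_i)$ with probability tending to one---is identical.
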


\begin{proof}
As already remarked in \eqref{19}, it follows from the second
assertion of the propositions stated in the previous section that the
depth of the valleys $(\mc E_i, \mc E_i \cup \Delta_1, \xi_i)$,
$|\Lambda_L|<i\le \kappa$, is $e^\beta |\mc E_i|/Z(\mc E_i)$.  The
first assertion of the lemma follows from Lemma \ref{s18} below and
from the fact, proved in the previous section, that for
$|\Lambda_L|<i\le \kappa$,
\begin{equation*}
\lim_{\beta\to\infty} \min_{\xi\in\mc E_i}
\mb P^\beta_\xi \big[ H(\bb H_{01} \setminus \mc E_i) = H(\mc E) \big]
\;=\; 1\;.
\end{equation*}

The second statement of the lemma follows from the definition of the
probability measure $Q(\mc E_i, \,\cdot\,)$ introduced in the previous
section. 
\end{proof}

Denote by $\{\eta^{\mc E}_t:t\ge 0\}$ the trace of the process
$\eta^\beta_t$ on $\mc E$. The jumps rates of the Markov process
$\eta^{\mc E}_t$ are represented by $R^{\mc E}_\beta(\eta, \xi)$.
Recall that $\xi_i$ is a fixed configuration in the set $\mc E_i$.

\begin{proposition}
\label{s16}
The sequence of Markov processes $\{\eta^\beta_t : t\ge 0\}$ exhibits
a tunneling behavior on the time-scale $e^\beta$, with metastable states
$\{\mc E_j : 1\le j\le \kappa\}$, metastable points $\xi_j$, $1\le j\le
\kappa$, and asymptotic Markov dynamics characterized by the rates
\begin{equation*}
\begin{split}
& r(\mc E_i, {\mc E}_j) \;=\; 0 \;,\quad 1\le i\le |\Lambda_L|
\;, \; 1\le j \not = i \le \kappa \;, \\
&\quad 
r(\mc E_i, \mc E_j) \;=\; \mb R(\mc E_i, \mc E_j)\;, \quad
|\Lambda_L|< i \le \kappa\;, \; 1\le j \not = i \le \kappa \;.    
\end{split}
\end{equation*}
\end{proposition}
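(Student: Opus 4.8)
The plan is to derive Proposition \ref{s16} from the general tunneling criterion of \cite{bl4} (equivalently \cite{bl2}), using the collection of valleys $(\mc E_i, \mc E_i\cup\Delta_1^*,\xi_i)$ assembled in Lemma \ref{s17} together with the hitting-probability identities recorded in Propositions \ref{s09}, \ref{s06}, \ref{s05}, \ref{s14} and Lemma \ref{s19}. The key structural facts already in hand are: (i) the sets $\mc E_1,\dots,\mc E_\kappa$ are disjoint subsets of $\bb H_{01}$, with $\mc E_i=\{\eta^{\mb x_i}\}$ for $1\le i\le|\Lambda_L|$; (ii) each triple $(\mc E_i,\mc E_i\cup\Delta_1^*,\xi_i)$ is a valley of depth $e^\beta|\mc E_i|/Z(\mc E_i)$ (interpreting $Z(\mc E_i)=\Cap$-normalisation via \eqref{19} for singletons $i>|\Lambda_L|$, and for $i\le|\Lambda_L|$ the depth is of strictly larger order $e^{2\beta}$); and (iii) the asymptotic exit distribution from $\mc E_i$, $i>|\Lambda_L|$, onto $\check{\mc E}_i$ is $Q(\mc E_i,\mc E_j)=\mb R(\mc E_i,\mc E_j)/Z(\mc E_i)$.

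First I would verify the hypotheses of the tunneling theorem of \cite{bl4} for the partition $\{\mc E_1,\dots,\mc E_\kappa\}$ of $\mc E$ (with $\Delta^*_1$ playing the role of the complementary ``wasteland''): the basic assumption \eqref{c1} is already checked in the sketch (the jump rates lie in $\{1,e^{-\beta},e^{-2\beta},e^{-3\beta}\}$), and the valley property of each $(\mc E_i,\mc E_i\cup\Delta^*_1,\xi_i)$ is Lemma \ref{s17}. The time scale separating the first-generation dynamics from the ground-state dynamics is $\theta_\beta=e^\beta$: on this scale the singleton valleys $\{\eta^{\mb x}\}$ have depth of order $e^{2\beta}\gg e^\beta$, so they act as absorbing points, which forces $r(\mc E_i,\mc E_j)=0$ for $i\le|\Lambda_L|$. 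For $i>|\Lambda_L|$, the depth is exactly of order $e^\beta$, so by the general theory the rescaled trace process $\eta^{\mc E}(e^\beta t)$ converges to the Markov chain on $\{\mc E_1,\dots,\mc E_\kappa\}$ whose jump rate out of state $i$ equals (depth)$^{-1}=Z(\mc E_i)/|\mc E_i|$ and which, upon jumping, lands in $\mc E_j$ with probability $Q(\mc E_i,\mc E_j)$; multiplying these gives $r(\mc E_i,\mc E_j)=Z(\mc E_i)|\mc E_i|^{-1}Q(\mc E_i,\mc E_j)$. It then remains to reconcile this with the stated formula $r(\mc E_i,\mc E_j)=\mb R(\mc E_i,\mc E_j)$: by the definition \eqref{b12} of $\mb R$ and the parameter identities in the propositions of Section \ref{sec3} (where the exponential parameter is $Z(\mc E_i)/|\mc E_i|$ with $|\mc E_i|=|\mc N(\mc E_i)|\cdot(\text{uniform weight})$ — indeed the process hits $\mc N(\mc E_i)$ uniformly so that the combinatorial factors $|\mc N|$ and $|\mc E_i|$ produce exactly the normalisation turning $Q$ into $\mb R/Z$), one checks the algebra reduces to $r(\mc E_i,\mc E_j)=\mb R(\mc E_i,\mc E_j)$.

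The one genuinely non-routine point is the replacement of $\Delta_1$ by $\Delta_1^*=\Delta_1\cup[\bb H_1\setminus\mc E]$, i.e.\ controlling the ``leftover'' part $\Lambda=\bb H_1\setminus\cup_j\Omega^j$ of $\bb H_1$ depicted in Figure \ref{fig1a}. One must show that starting from any $\mc E_i$ with $i>|\Lambda_L|$ the process reaches $\cup_j\mc E_j$ before $\Lambda$ with probability tending to $1$; this is the content of the displayed limit in the proof of Lemma \ref{s17}, which is in turn established through the auxiliary Lemma \ref{s18}. Concretely, every configuration in $\mc N(\mc E_i)$ sits in an equivalence class of rate-one-connected configurations that, after finitely many rate-$1$ moves, returns to $\bb H_{01}$ inside $\Omega^0\cup\Omega^1\cup\Omega^2\cup\Omega^3\cup\Omega^4=\mc E$ (this is exactly the case-by-case analysis of Lemmas \ref{s10}, \ref{s13}, \ref{s07}, \ref{s12}); hence no excursion leaves $\mc E\cup\Delta_1$ for $\Lambda\setminus\mc E$ with non-vanishing probability, and the valley depth and exit law are unchanged. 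The remaining verifications — that $\Delta^*_1$ meets condition (2.15) of \cite{bl2} (it does, since it is contained in $\bb H_1$, which lies strictly above $\bb H_{01}$ in energy), and that the average-rate computation giving the exponential parameter $Z(\mc E_i)/|\mc E_i|$ holds uniformly — are the routine bookkeeping already carried out in the individual propositions of Section \ref{sec3}.

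In summary: the proof is an application of the first-generation tunneling result of \cite{bl4} to the partition $\{\mc E_j\}$ of $\mc E$ on the scale $e^\beta$; the vanishing rates for the square configurations follow from their strictly deeper valley depth ($e^{2\beta}$ versus $e^\beta$); the positive rates $\mb R(\mc E_i,\mc E_j)$ are obtained by combining the exponential parameters and exit distributions from Propositions \ref{s09}, \ref{s06}, \ref{s05}, \ref{s14} via the identity $\mb R=Z\cdot Q$; and the only delicate ingredient is Lemma \ref{s17}'s enlargement of $\Delta_1$ to $\Delta_1^*$, which rests on the exhaustive description of the shallow valleys given in Section \ref{sec3}.
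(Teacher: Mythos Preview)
Your overall strategy---invoke the tunneling criterion from \cite{bl2,bl4} for the partition $\{\mc E_1,\dots,\mc E_\kappa\}$ on the scale $e^\beta$, using Lemma \ref{s17} and the valley propositions of Section \ref{sec3} as input---is the same as the paper's. But the execution has two genuine gaps.

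\textbf{(M3) is missing.} You never address the third defining property of tunneling: that on the scale $e^\beta$ the time spent in $\Delta^*_1=\Omega_{L,K}\setminus\mc E$ is negligible. This does \emph{not} follow automatically from the valley property or from assumptions (H0)--(H1) of \cite[Theorem 2.7]{bl2}; the paper proves it separately, and the argument occupies roughly half of the proof. Concretely one must show, for every $t>0$, that $\mb E^\beta_\xi[e^{-\beta}\int_0^{te^\beta}\mb 1\{\eta_s\notin\mc E\}\,ds]\to 0$. The paper does this by controlling the successive return times $H_k$ to $\mc E$: first showing $\mb P^\beta_\xi[H_k\le te^\beta]\to 0$ as $k\to\infty$ (since each return time dominates a mean-$c_0e^\beta$ exponential), then bounding the time spent in $\Delta^*_1$ between consecutive returns by the hitting time of a rate-$1$ finite chain. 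Your summary never mentions this step.

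\textbf{The rate identification is muddled.} Your heuristic ``rate $=$ (depth)$^{-1}\times$(exit probability)'' yields $r(\mc E_i,\mc E_j)=(Z(\mc E_i)/|\mc E_i|)\,Q(\mc E_i,\mc E_j)=\mb R(\mc E_i,\mc E_j)/|\mc E_i|$, and the paragraph where you ``reconcile'' this with the claimed $\mb R(\mc E_i,\mc E_j)$ by invoking unspecified combinatorial factors is not an argument. The paper does not proceed this way: it verifies hypothesis (H0) directly---i.e.\ it shows that the rescaled average trace rates $e^\beta r_\beta(\mc E_i,\mc E_j)$ converge---using $r_\beta(\mc E_i,\check{\mc E}_i)=\Cap_K(\mc E_i,\check{\mc E}_i)/\mu_K(\mc E_i)$ together with the capacity/saddle estimate \eqref{17} and $G_K(\mc E_i,\check{\mc E}_i)=e^{-\beta}\mu_K(\eta)$, and then identifies the limit via \cite[Lemma 10.2]{bl4} and Lemma \ref{s17}. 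You should follow that route rather than the depth heuristic; in particular, (H0) is a hypothesis you must verify, not a consequence of the valley property.

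A minor point: your claim that $\Delta_1^*$ ``is contained in $\bb H_1$'' is false, since $\Delta_1^*\supset\Delta_1=\{\bb H>\bb H_{\min}+1\}$. Also, the $\Delta_1\to\Delta_1^*$ enlargement you dwell on is exactly Lemma \ref{s17} and is used only as input here; the emphasis in the proof of Proposition \ref{s16} should be on (H0), (H1) and (M3).
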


\begin{proof}
We check that the first two assumptions of \cite[Theorem 2.7]{bl2} are
fulfilled. We start with assumption (H1). For the valleys $\mc E_j$
which are singletons, there is nothing to prove. For the other ones,
as $\check {\mc E}_j \subset [\mc E_j \cup \Delta_1]^c$, assumption
(H1) follows from the proofs of Propositions \ref{s09} and \ref{s05}.

We turn to assumption (H0). Denote by $r_\beta(\mc E_i, \mc E_j)$ the
average rates of the trace process:
\begin{equation*}
r_\beta(\mc E_i, \mc E_j)\;=\; \frac 1{\mu_K(\mc E_i)} 
\sum_{\eta\in\mc  E_i} \mu_K(\eta) \sum_{\xi\in\mc  E_j} R^{\mc E}_\beta
(\eta, \xi)\;.
\end{equation*}
We claim that $e^\beta r_\beta(\mc E_i, \mc E_j)$, $1\le i\not = j\le
\kappa$, converges to a limit denoted by $r(i,j)$, and that
$\sum_{j\not =i} r(i,j)=0$, $1\le i\le |\Lambda_L|$, $\sum_{j\not =i}
r(i,j)\in (0,\infty)$, $i> |\Lambda_L|$.

Consider first the case $i> |\Lambda_L|$. We may rewrite $e^\beta
r_\beta(\mc E_i, \mc E_j)$ as $e^\beta r_\beta(\mc E_i, \check{\mc
  E}_i) \times [r_\beta(\mc E_i, \mc E_j)/ r_\beta(\mc E_i, \check{\mc
  E}_i)]$. By \cite[Corollary 4.4]{bl4}, $r_\beta(\mc E_i, \mc E_j)/
r_\beta(\mc E_i, \check{\mc E}_i)$ converges to a number $p(\mc E_i,
\mc E_j)\in [0,1]$.

On the other hand, by \cite[Lemma 6.7]{bl2}, $e^\beta r_\beta(\mc E_i,
\check{\mc E}_i) = e^\beta \Cap_K (\mc E_i, \check{\mc
  E}_i)/\mu_K (\mc E_i)$. From the results stated in the previous
section, it is easy to construct a path $\gamma$ from $\mc E_i$ to
$\check{\mc E}_i$ such that $G_K(\gamma) = e^{-\beta}
\mu_K(\eta)$, $\eta\in\mc E_i$. It is also easy to see that any
path $\gamma'$ from $\mc E_i$, to $\check{\mc E}_i$ is such that
$G_K(\gamma) \le e^{-\beta} \mu_K(\eta)$, $\eta\in\mc
E_i$. Hence, $G_K(\mc E_i, \check{\mc E}_i) = e^{-\beta}
\mu_K(\eta)$, $\eta\in\mc E_i$. Assumption (H0) for $i>
|\Lambda_L|$ follows from this identity and \eqref{17}.

Fix now $i\le |\Lambda_L|$. Since $r_\beta(\mc E_i, \mc E_j) \le
r_\beta(\mc E_i, \check{\mc E}_i)$, we have to show that the rescaled
rate $e^\beta r_\beta(\mc E_i, \check{\mc E}_i) = e^\beta \Cap_K
(\mc E_i, \check{\mc E}_i)/\mu_K (\mc E_i)$ vanishes as
$\beta\uparrow\infty$. Since $G_K(\mc E_i, \check{\mc E}_i) \le
e^{-2\beta} \mu_K(\eta)$, $\eta\in\mc E_i$, the result follows
from \eqref{17}.

In view of the proof of \cite[Lemma 10.2]{bl4} and Lemma \ref{s17},
$e^\beta r_\beta(\mc E_i, \mc E_j)$, $1\le i\not = j\le \kappa$,
converges to $Z(\mc E_i) Q(\mc E_i, \mc E_j) = \mb R(\mc E_i, \mc
E_j)$.

It remains to show property (M3) of tunneling, which states that the
time spent outside $\mc E$ is negligible. Fix $1\le i\le \kappa$ and
$\xi\in\mc E_i$. Denote by $\{H_j : j\ge 1\}$ the times of the
successive returns to $\mc E$: $H_1 = H^+(\mc E)$, $H_{j+1} = H^+(\mc
E) \circ \theta_{H_j}$, $j\ge 1$. To prove (M3), it is enough to show
that for all $t>0$ 
\begin{equation}
\label{11}
\begin{split}
& \lim_{k\to\infty} \lim_{\beta\to\infty} \mb P^\beta_\xi 
\big[ H_k \le t e^\beta\big] \;=\;0 \quad\text{and}\\
& \lim_{\beta\to\infty} \mb E^\beta_\xi \big[ e^{-\beta} \int_0^{H_k
  \wedge t e^\beta} \mb 1\{\eta^\beta_{s} \in \Delta^*_1\} 
\, ds \big] \;=\;0 \quad \text{for all $k\ge 1$}\;.
\end{split}
\end{equation}

Since $H_1= H^+(\mc E)$ is greater than the time of the first jump,
there exists a positive constant $c_0$, independent of $\beta$, which
turns $H_1= H^+(\mc E)$ bounded below by a mean $c_0 e^{\beta}$
exponential time, $\mb P^\beta_\eta$ almost surely for all $\eta\in
\mc E$. The first result of \eqref{11} follows from this observation
and of the strong Markov property.

To estimate the second term of of \eqref{11}, fix $k\ge 1$ and rewrite
the time integral as $\sum_{0\le j<k} \int_{H_j \wedge t
  e^\beta}^{H_{j+1} \wedge t e^\beta}$. For a fixed $j$, the integral
vanishes unless $H_j < t e^\beta$. In this case, we may apply the
strong Markov property to estimate the expectation by
\begin{equation*}
k\, \sup_{\xi\in \mc E} \mb E^\beta_\xi \big[ e^{-\beta} \int_0^{H_1
  \wedge t e^\beta} \mb 1\{\eta^\beta_{s} \in \Delta^*_1\} 
\, ds \big] \;.
\end{equation*}
If $\xi$ belongs to $\mc E_i$, $1\le i\le |\Lambda_L|$, the
expectation is bounded above by $t \mb P^\beta_\xi [\tau_1 \le t
e^\beta]$, where $\tau_1$ is the time of the first jump. This
expression vanishes because $\tau_1$ is an exponential time whose mean
is of order $e^{2\beta}$. For $i> |\Lambda_L|$, we have seen in the
proofs of the propositions of the previous section that the time spent
between two visits to $\mc E$ can be estimated by the time a rate $1$,
finite state, irreducible Markov process needs to visit a specific
set. This concludes the proof of the proposition.
\end{proof}

Let $\Psi : \mc E \to \{1, \dots, \kappa\}$ be the index function
$\Psi (\eta) = \sum_{1\le j\le \kappa} j\, \mb 1\{\eta\in\mc E_j\}$.
It follows from the previous result that the non-Markovian process
$X^\beta_t = \Psi(\eta^{\mc E}_{te^\beta})$ converges to the Markov
process on $\{1, \dots, \kappa\}$ with jump rates $r(i,j) = \mb R(\mc
E_i, \mc E_j)$. The states $\{1, \dots, |\Lambda_L|\}$ are absorbing,
while the states $\{|\Lambda_L| +1, \dots, \kappa\}$ are transient for
the asymptotic dynamics.

Let $q(i,j)$, $1\le i\le \kappa$, $1\le j\le |\Lambda_L|$, be the
probability that starting from $i$ the asymptotic process eventually
reaches the absorbing point $j$:
\begin{equation}
\label{10}
q(i,j) \;=\; \bb P_i \big[ X_t = j \text{ for some }
t>0\big]\;, 
\end{equation}
where $\bb P_i$ stands for the probability on the path space
$D([0,\infty), \{1, \dots, \kappa\})$ induced by the Markov process
with rates $r(j,k)$ starting from $i$. We sometimes denote $q(i,j)$ by
$q(\mc E_i, \mc E_j)$.

\section{Tunneling among the deep valleys}
\label{sec2}

We prove in this section the main result of this article.  Recall that
we denoted by $\mc E_{\mb x}$, $\mb x\in \Lambda_L$, the singletons
$\{\eta^{\mb x}\}$, and that we denoted by $\mc N(\eta^{\mb x})$ the set
of configurations which can be reached from $\eta^{\mb x}$ by a jump
of rate $e^{-2\beta}$. By Lemma \ref{bs01}, for each $\xi\in \mc N(
\eta^{\mb x})$ there exists a probability measure $\bb M (\xi,
\,\cdot\,)$ defined on $\bb H_{01}$ such that
\begin{equation*}
\lim_{\beta\to\infty} \mb P^\beta_{\xi} \big[ \eta(H_{\bb H_{01}})\in 
\Pi \big] \;=\; \bb M (\xi, \Pi)\;, \quad \Pi\subset \bb H_{01} \;.
\end{equation*} 

Recall from \eqref{10} the definition of the probability $q(\mc E_j,
\,\cdot\,)$. Let
\begin{equation*}
Z \;=\; 
\sum_{\xi\in \mc N (\eta^{\mb x})} \sum_{j=1}^{\kappa} \bb M (\xi, \mc E_j) 
\, q(\mc E_j, \breve{\mc F}_{\mb x})\;=\; 
\sum_{\xi\in \mc N (\eta^{\mb x})} \sum_{j=1}^{\kappa} 
\bb M (\xi, \mc E_j) \, \big[ 1 - q(\mc E_j, \mc E_{\mb x})\big] \;,
\end{equation*}
where $\breve{\mc F}_{\mb x} = \cup_{\mb y \not = \mb x} \mc E_{\mb
  y}$, the union being carried over $\mb y\in\Lambda_L$.  Recall that
we denote by $\Delta_0$ the configurations which are not ground
states: $\Delta_0 = \Omega_{L,K} \setminus \Omega^0$, and let ${\mc F}
= \cup_{\mb y\in\Lambda_L} \mc E_{\mb y}$.

\begin{proposition}
\label{s02}
Fix $\mb x\in \Lambda_L$.
\begin{enumerate}
\item The triple $(\mc E_{\mb x}, \mc E_{\mb x}\cup \Delta_0, \eta^{\mb
    x})$ is a valley of depth $\mu_K (\eta^{\mb x})/\Cap_K(\mc
  E_{\mb x}, \breve{\mc F}_{\mb x})$;

\item Under $\mb P^\beta_{\eta^{\mb x}}$, $H (\breve{\mc
    F}_{\mb x})/e^{2\beta}$ converges in distribution to an exponential
  random variable of parameter $Z$;

\item For any $\mb y \not = \mb x$,
\begin{equation*}
\lim_{\beta\to\infty} \mb P^\beta_{\eta^{\mb x}} 
\big[ \eta(H(\breve{\mc F}_{\mb x})) = \eta^{\mb y} \big] 
\;=\; \frac 1Z\, \sum_{\xi\in \mc N (\eta^{\mb x})} 
\sum_{j=1}^\kappa \bb M (\xi, \mc E_j) 
\, q(\mc E_j, {\mc E}_{\mb y})\;=: \bb Q(\mb x, \mb y)\;.
\end{equation*}
\end{enumerate}
\end{proposition}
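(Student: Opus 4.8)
The plan is to obtain assertion (1) directly from \cite[Theorem 2.6]{bl2}, exactly as Lemma \ref{s19} was obtained, and then to deduce (2) and (3) from a renewal decomposition of the trajectory of $\eta^\beta_t$ along its successive visits to the set of ground states $\mc F=\Omega^0$, using Lemma \ref{bs01} to control the descent from $\mc N(\eta^{\mb x})$ to $\bb H_{01}$ and Proposition \ref{s16} (equivalently, Lemma \ref{s17} iterated along the returns to $\mc E$) to control the subsequent evolution on the scale $e^\beta$. For (1): the well $\mc E_{\mb x}$ is a singleton, so the interior part of condition (2.14) of \cite[Theorem 2.6]{bl2} is void; condition (2.15) holds by the definition $\Delta_0=\Omega_{L,K}\setminus\Omega^0$; and the remaining hypothesis follows from \eqref{17} together with the identity $G_K(\mc E_{\mb x},\breve{\mc F}_{\mb x})=e^{-2\beta}\mu_K(\eta^{\mb x})$ recorded in the sketch. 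Theorem 2.6 of \cite{bl2} then gives that the triple is a valley of depth $m_\beta:=\mu_K(\eta^{\mb x})/\Cap_K(\mc E_{\mb x},\breve{\mc F}_{\mb x})$, that $H(\breve{\mc F}_{\mb x})/m_\beta$ converges in law to a mean one exponential, and that the time spent in $\Delta_0$ is negligible; by \eqref{17} the quantity $e^{-2\beta}m_\beta$ has a finite, strictly positive limit, so (2) will follow once that limit is identified with $1/Z$ (which, in passing, re-establishes $Z\in(0,\infty)$).

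To prove (2) and (3) I would set up a renewal scheme at $\eta^{\mb x}$. Starting from $\eta^{\mb x}$, the first jump occurs after an exponential time $\tau$ of rate $8e^{-2\beta}+4(n-2)e^{-3\beta}$, so that $e^{-2\beta}\tau$ converges in law to an exponential of parameter $8$; at this jump the process lands, with probability $1/8+o(1)$, at each of the eight configurations of $\mc N(\eta^{\mb x})$, the residual probability $o(1)$ corresponding to the configurations of energy $\bb H_{\rm min}+3$ reached at rate $e^{-3\beta}$, which contribute only error terms. From $\xi\in\mc N(\eta^{\mb x})$, by Lemma \ref{bs01} together with the uniform bound $\lim_{A\to\infty}\lim_{\beta\to\infty}\max_{\xi}\mb P^\beta_\xi[H_{\bb H_{01}}>A]=0$ (proved as in Proposition \ref{s09}), the process reaches $\bb H_{01}$ after a time that is tight in $\beta$, at a configuration whose law converges to $\bb M(\xi,\cdot)$; by Lemma \ref{bs01} this measure is carried by $\{\eta^{\mb x}\}\cup\bigcup_{0\le i,j\le 3}\mc E^{i,j}_{\mb x}$. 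Landing at $\eta^{\mb x}$ closes a renewal cycle; landing in a shallow valley $\mc E_j$, $j>|\Lambda_L|$, leads, by Proposition \ref{s16} and the strong Markov property applied at the successive returns to $\mc E$, within a time of order $e^\beta$ (tight after division by $e^\beta$) to some ground state $\mc E_{\mb z}$, the first one visited being $\eta^{\mb z}$ with probability $q(\mc E_j,\mc E_{\mb z})+o(1)$; if $\mb z=\mb x$ the cycle renews, otherwise the process has reached $\breve{\mc F}_{\mb x}$.

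Each renewal cycle therefore ends at $\breve{\mc F}_{\mb x}$, rather than back at $\eta^{\mb x}$, with probability $p_\beta=(1/8)\sum_{\xi\in\mc N(\eta^{\mb x})}\sum_{j=1}^{\kappa}\bb M(\xi,\mc E_j)\,[1-q(\mc E_j,\mc E_{\mb x})]+o(1)=Z/8+o(1)$, and, conditionally on ending there, it ends at $\eta^{\mb y}$ with probability converging to $Z^{-1}\sum_{\xi}\sum_j\bb M(\xi,\mc E_j)\,q(\mc E_j,\mc E_{\mb y})=\bb Q(\mb x,\mb y)$. Since $Z\in(0,\infty)$, the number of cycles performed before reaching $\breve{\mc F}_{\mb x}$ is tight, and the parts of the cycle durations other than the holding times $\tau$ at $\eta^{\mb x}$ add up to a quantity of order $e^\beta$ in probability, hence negligible on the scale $e^{2\beta}$. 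Consequently $H(\breve{\mc F}_{\mb x})$ equals, up to an $o_{\mathbb P}(e^{2\beta})$ term, a geometric (parameter $p_\beta$) sum of i.i.d.\ exponentials of rate $8e^{-2\beta}(1+o(1))$, which is itself exponential with rate $p_\beta\cdot 8e^{-2\beta}(1+o(1))=Ze^{-2\beta}(1+o(1))$; this proves (2) and shows $\lim_\beta e^{-2\beta}m_\beta=1/Z$, while the conditional computation above yields (3).

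The main obstacle is the bookkeeping in this three-regime composition: one has to control the accumulated error terms over a random---though tight---number of cycles, and, in the intermediate regime, to transfer both the absorption probabilities $q(\mc E_j,\cdot)$ and the finiteness of the absorption time from the limiting $\kappa$-state Markov chain of Proposition \ref{s16} back to $\eta^\beta_t$, which comes down to iterating Lemma \ref{s17} along the returns to $\mc E$ and passing to the limit in the hitting probabilities of the finitely many absorbing states. Once these uniform estimates are secured, the geometric-sum-of-exponentials identity and the conditional-location formula are routine.
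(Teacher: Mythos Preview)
Your proposal is correct and follows essentially the same route as the paper: part (1) is obtained from the singleton-valley criterion of \cite{bl2} exactly as in Lemma \ref{s19}, and parts (2)--(3) are proved by the same renewal decomposition at $\eta^{\mb x}$, using Lemma \ref{bs01} for the descent from $\mc N(\eta^{\mb x})$ to $\bb H_{01}$ and Proposition \ref{s16} for the absorption in $\mc F$ on the intermediate scale $e^\beta$. The only cosmetic difference is that the paper carries out the renewal computation via the Laplace-transform identity of Proposition \ref{s09} (writing $H(\breve{\mc F}_{\mb x})=\tau_1+H(\mc F)\circ\theta_{\tau_1}+\mb 1\{\cdots\}H(\breve{\mc F}_{\mb x})\circ\theta_{H^+(\mc F)}$ and solving for the transform), whereas you phrase the same computation as a geometric sum of i.i.d.\ exponentials; these are two presentations of the same argument, and the key estimate you flag---tightness of $e^{-\beta}H(\mc F)$ starting from $\mc N(\eta^{\mb x})$---is exactly the paper's \eqref{16}.
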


\begin{proof}
Recall \cite[Theorem 2.4]{bl2}. By definition of the set $\Delta_0$,
$\mu_K(\Delta_0)/\mu_K(\mc E_{\mb x})$ is of order
$e^{-\beta}$. Condition (2.15) is therefore fulfilled. Since $\mc
E_{i}$ is a singleton, condition (2.14) holds automatically and the
result follows.

The proof of the second assertion is similar to the one of the
second claim in Proposition \ref{s09} with the following
modifications. We first need to replace the normalization $e^{\beta}$ by
$e^{2\beta}$ and to define $\tau_1$ as the time of the first jump, to
write
\begin{equation*}
H(\breve{\mc F}_{\mb x}) \;=\; \tau_1 \;+\; H({\mc F})\circ 
\theta_{\tau_1} \;+\; 
\mb 1\{ H({\mc F}) \circ \theta_{\tau_1} = H(\mc E_{\mb x}) 
\circ \theta_{\tau_1}\} H(\breve{\mc F}_{\mb x}) \circ
\theta_{H^+({\mc F})}\; .
\end{equation*}
At this point, we repeat the arguments presented in the proof of
Proposition \ref{s09}. In the present context, $\tau_1$ and
$\eta_{\tau_1}$ are independent by the Markov property, and
$\eta_{H(\mc E_{\mb x})} = \eta^{\mb x}$. We may therefore
skip the coupling arguments of Proposition \ref{s09}. 

In contrast, we need to show that
\begin{equation}
\label{16}
\lim_{A\to\infty} \lim_{\beta\to\infty}  \max_{\zeta \in \mc N (\eta^{\mb x})} 
\mb P^\beta_{\zeta} \big[ H({\mc F}) > A e^\beta \big] \;=\; 0\;.
\end{equation}
Starting from $\zeta\in \mc N(\eta^{\mb x})$, in a time of order one
the process reaches $\mc E$. It follows from Proposition \ref{s16}
that once at $\mc E$ in a time of order $e^\beta$ the process reaches
one of the absorbing point $\{\eta^{\mb x} : \mb x\in \Lambda_L\}$ of
the asymptotic Markovian dynamics characterized by the rates
$r(\,\cdot\,,\,\cdot\,)$. This proves \eqref{16}.

It follows from this result and the proof of Proposition \ref{s09}
that to prove the second assertion of the proposition it is enough to
show that
\begin{equation*}
  \lim_{\beta\to\infty} \sum_{\zeta\in \mc N(\eta^{\mb x})} \mb P^\beta_{\zeta} 
  \big[ H({\mc F}) \not = H(\mc E_{\mb x}) \big] \;=\; Z\;.
\end{equation*}
Since $H({\mc E}) \le H({\mc F}) \le  H(\mc E_{\mb x})$, by the
strong Markov property we may rewrite the previous probability as
\begin{equation*}
\mb E^\beta_{\zeta} \Big[ \mb P^\beta_{\eta(H(\mc E))} 
\big[ H({\mc F}) \not = H(\mc E_{\mb x}) \big]\, \Big]\;.
\end{equation*}
We computed in Lemma \ref{bs01} the asymptotic distribution of
$\eta(H(\mc E))$ and we represented by $q(\mc E_j, \mc E_{\mb y})$ the
probability that the asymptotic process starting from a set $\mc E_j$,
$1\le j\le \kappa$, eventually reaches the absorbing state $\mc
E_{\mb y}$, $\mb y\in \Lambda_L$. The second assertion of the
proposition follows from these two results.

We now turn to the third assertion of the proposition. Fix $\mb y \not =
\mb x$. This argument is also similar to the one of Proposition
\ref{s09}. Denote by $\{H_j : j\ge 1\}$ the successive return times to
$\mc F$:
\begin{equation*}
H_1 = H^+(\mc F)\;, \quad H_{j+1} = H^+(\mc F) \circ \theta_{H_j}\;,
\quad j\ge 1\;.
\end{equation*}
With this notation,
\begin{equation}
\label{13}
\mb P^\beta_{\eta^{\mb x}} 
\big[ \eta(H(\breve{\mc F}_{\mb x})) = \eta^{\mb y} \big] 
\;=\; \sum_{j\ge 1} \mb P^\beta_{\eta^{\mb x}} 
\big[ \eta(H_k) = \eta^{\mb x} \;, 1\le k\le j-1\;,
\eta(H_j) = \eta^{\mb y} \big]\;.
\end{equation}

By the strong Markov property, if $\tau_1$ stands for the time of the
first jump, for any $\mb z\in \Lambda_L$,
\begin{equation*}
\mb P^\beta_{\eta^{\mb x}} \big[ \eta(H_1) = \eta^{\mb z} \big] \; =\;
\mb E^\beta_{\eta^{\mb x}} \Big[ \mb E^\beta_{\eta_{\tau_1}} \Big[
\mb P^\beta_{\eta_{H(\mc E)}} 
\big[\eta(H_{\mc F}) = \eta^{\mb z} \big]\, \Big]\, \Big]\;.
\end{equation*}
As $\beta\uparrow\infty$, this expression converges to
\begin{equation*}
\frac 18 \, \sum_{\xi\in \mc N(\eta^{\mb x})} 
\sum_{j=1}^\kappa \bb M (\xi, \mc E_j) 
\, q(\mc E_j, {\mc E}_{\mb z})\;.
\end{equation*}
The third assertion of the proposition follows from \eqref{13}, this
identity and the strong Markov property.
\end{proof}

It follows from (1) and (2) that the triple 
\begin{equation}
\label{14}
\text{$(\mc E_{\mb x}, \mc
E_{\mb x}\cup \Delta_0, \eta^{\mb x})$ is in fact a valley of depth
$e^{2\beta}/Z$}\; .
\end{equation}

\begin{corollary}
\label{s20}
The sequence of Markov processes $\{\eta^\beta_t : t\ge 0\}$ exhibits
a tunneling behavior on the time-scale $e^{2\beta}$, with metastable states
$\{\mc E_{\mb x} : \mb x\in\Lambda_L \}$, metastable points $\{\eta^{\mb
  x}\}$ and asymptotic Markov dynamics characterized by the rates
\begin{equation*}
r(\mc E_{\mb x}, {\mc E}_{\mb y}) \;=\; Z \,
\bb Q({\mb x}, {\mb y})\;, \quad
\mb x \not = \mb y \in \Lambda_L \;.    
\end{equation*}
\end{corollary}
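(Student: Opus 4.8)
The plan is to deduce Corollary \ref{s20} from Proposition \ref{s02} together with the abstract tunneling criterion \cite[Theorem 2.7]{bl2}, following the same scheme by which Proposition \ref{s16} was obtained from the valley statements of Section \ref{sec3}. Put $\mc F = \cup_{\mb x\in\Lambda_L} \mc E_{\mb x}$, consider the partition $\{\mc E_{\mb x} : \mb x\in\Lambda_L\}$ of $\mc F$ with $\Delta_0 = \Omega_{L,K}\setminus\Omega^0$ playing the role of the negligible complement, and check the hypotheses of \cite[Theorem 2.7]{bl2} for the process $\eta^\beta_t$ speeded up by $e^{2\beta}$.

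Assumption (H1) holds trivially since every atom $\mc E_{\mb x}=\{\eta^{\mb x}\}$ is a singleton. For assumption (H0) one must show that the rescaled average jump rates $e^{2\beta} r_\beta(\mc E_{\mb x}, \mc E_{\mb y})$ of the trace of $\eta^\beta_t$ on $\mc F$ converge. As in the proof of Proposition \ref{s16}, I would factor
\begin{equation*}
e^{2\beta} r_\beta(\mc E_{\mb x}, \mc E_{\mb y}) \;=\;
e^{2\beta} r_\beta(\mc E_{\mb x}, \breve{\mc F}_{\mb x}) \,
\frac{r_\beta(\mc E_{\mb x}, \mc E_{\mb y})}{r_\beta(\mc E_{\mb x}, \breve{\mc F}_{\mb x})}\;.
\end{equation*}
By \cite[Corollary 4.4]{bl4} the ratio on the right converges to $\lim_\beta \mb P^\beta_{\eta^{\mb x}}[\eta(H(\breve{\mc F}_{\mb x})) = \eta^{\mb y}]$, which is $\bb Q(\mb x, \mb y)$ by Proposition \ref{s02}(3). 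By \cite[Lemma 6.7]{bl2}, $e^{2\beta} r_\beta(\mc E_{\mb x}, \breve{\mc F}_{\mb x}) = e^{2\beta}\Cap_K(\mc E_{\mb x}, \breve{\mc F}_{\mb x})/\mu_K(\eta^{\mb x})$; and since, by Proposition \ref{s02}(1)--(2) (equivalently \eqref{14}), the depth $\mu_K(\eta^{\mb x})/\Cap_K(\mc E_{\mb x}, \breve{\mc F}_{\mb x})$ of the valley $(\mc E_{\mb x}, \mc E_{\mb x}\cup\Delta_0, \eta^{\mb x})$ is asymptotically $e^{2\beta}/Z$, this prefactor tends to $Z$. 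Hence $e^{2\beta} r_\beta(\mc E_{\mb x}, \mc E_{\mb y}) \to Z\,\bb Q(\mb x, \mb y) = r(\mc E_{\mb x}, \mc E_{\mb y})$; summing over $\mb y\ne\mb x$ gives total rate $Z\in(0,\infty)$, and strict positivity of each $r(\mc E_{\mb x}, \mc E_{\mb y})$ follows from the explicit formula for $\bb Q(\mb x,\mb y)$ and the positivity of the hitting probabilities $q(\mc E_j, \mc E_{\mb y})$ and of $\bb M(\xi,\mc E_j)$.

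It remains to verify property (M3), that on the scale $e^{2\beta}$ the time spent outside $\mc F$ is negligible. The valley property of Proposition \ref{s02}(1) already gives that, started from $\eta^{\mb x}$, the fraction of the excursion (of order $e^{2\beta}$) spent in $\mc B\setminus\mc W=\Delta_0$ before hitting $\breve{\mc F}_{\mb x}$ is negligible; iterating over the successive returns to $\mc F$ by the strong Markov property — precisely the argument leading to \eqref{11} in the proof of Proposition \ref{s16}, now with $e^\beta$ replaced by $e^{2\beta}$ and $\mc E$ by $\mc F$ — yields (M3). Quantitatively, between two consecutive visits to $\Omega^0$ the process spends only a time of order $e^\beta$ wandering among the shallow valleys before falling back (Proposition \ref{s16}), while a full excursion lasts a time of order $e^{2\beta}$, so the relevant fraction is $O(e^{-\beta})$ and, the number of excursions in $[0,te^{2\beta}]$ being tight, the total time outside $\mc F$ divided by $e^{2\beta}$ tends to $0$. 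With (H0), (H1) and (M3) established, \cite[Theorem 2.7]{bl2} delivers the tunneling behavior with metastable states $\{\mc E_{\mb x}\}$, metastable points $\eta^{\mb x}$ and rates $r(\mc E_{\mb x},\mc E_{\mb y})=Z\,\bb Q(\mb x,\mb y)$, which is the corollary (and, forgetting the rate formula, Theorem \ref{s11}). The real difficulty of the whole program lies upstream, in proving Proposition \ref{s02} — i.e.\ pinning down the normalization $Z$ and the exit law $\bb Q$ via the valley analysis and \eqref{17}; within the present argument the only delicate point is the (M3) step, where one must simultaneously control the length of each excursion, the tightness of the number of excursions in a bounded time window, and the negligibility of the part of each excursion spent away from the ground states.
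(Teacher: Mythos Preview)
Your proof is correct and follows essentially the same approach as the paper: verify (H1) trivially for singletons, check (H0) by factoring $e^{2\beta} r_\beta(\mc E_{\mb x},\mc E_{\mb y})$ into a capacity term and a hitting-probability ratio, identify the limit as $Z\,\bb Q(\mb x,\mb y)$ via Proposition \ref{s02} and \eqref{14}, and establish (M3) by decomposing into successive returns to $\mc F$ and using that each excursion outside $\mc F$ lasts only $O(e^\beta)$. The paper's treatment of (M3) is slightly more explicit (it writes down the analogue of \eqref{11}, namely \eqref{15}, and closes it with the estimate \eqref{16} proved inside Proposition \ref{s02}), whereas you sketch the same mechanism more qualitatively; but no idea is missing.
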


\begin{proof}
The proof is similar to the one of Proposition \ref{s16}.  We first
check that assumptions (H0) and (H1) of \cite[Theorem 2.7]{bl2} are
fulfilled. Hypothesis (H1) is trivially satisfied since the sets $\mc
E_{\mb x}$ are singletons.


To prove assumption (H0), denote $\{\eta_t^{\mc F}:t\ge 0\}$ the trace
of the process $\eta^\beta_t$ on $\mc F$, and by $R^{\mc F}_\beta$ the
jump rates of the trace process. Note that in this case of singleton
valleys, the average rates coincide with the rates.  We claim that
$e^{2\beta} R^{\mc F}_\beta(\mc E_{\mb x}, \mc E_{\mb y})$, $\mb x\not
= \mb y\in\Lambda_L$, converges to a limit denoted by $R(\mb x, \mb
y)$.

We may rewrite $e^{2\beta} R^{\mc F}_\beta(\mc E_{\mb x}, \mc E_{\mb
  y})$ as $e^{2\beta} R^{\mc F}_\beta(\mc E_{\mb x}, \check{\mc
  F}_{\mb x}) \times [R^{\mc F}_\beta(\mc E_{\mb x}, \mc E_{\mb y})/
R^{\mc F}_\beta(\mc E_{\mb x}, \check{\mc F}_{\mb x})]$. By
\cite[Corollary 4.4]{bl4}, $R^{\mc F}_\beta(\mc E_{\mb x}, \mc E_{\mb
  y})/ R^{\mc F}_\beta(\mc E_{\mb x}, \check{\mc F}_{\mb x})$
converges to a number $p(\mc E_{\mb x}, \mc E_{\mb y})\in [0,1]$.  On
the other hand, by \cite[Lemma 6.7]{bl2}, $e^{2\beta} R^{\mc
  F}_\beta(\mc E_{\mb x}, \check{\mc F}_{\mb x}) = e^{2\beta}
\Cap_K (\mc E_{\mb x}, \check{\mc F}_{\mb x})/\mu_K (\mc
E_{\mb x})$. Clearly, $G_K(\mc E_{\mb x}, \check{\mc F}_{\mb x}) =
e^{-2\beta} \mu_K(\eta^{\mb x})$. Hence, assumption (H0) follows
from \eqref{17}.

In view of \cite[Lemma 10.2]{bl4}, Proposition \ref{s02} and
\eqref{14}, $e^{2\beta}$ $R^{\mc F}_\beta(\mc E_{\mb x}, \mc E_{\mb
  y})$, $\mb x\not = \mb y\in\Lambda_L$, converges to $Z \bb Q(\mb x ,
\mb y)$.

It remains to show property (M3) of tunneling, which states that the
time spent outside $\mc F$ is negligible. Fix $\mb
x\in\Lambda_L$. Denote by $\{H_j : j\ge 1\}$ the times of the
successive returns to $\mc F$: $H_1 = H^+(\mc F)$, $H_{j+1} = H^+(\mc
F) \circ \theta_{H_j}$, $j\ge 1$. To prove (M3), it is enough to show
that for all $t>0$
\begin{equation}
\label{15}
\begin{split}
& \lim_{k\to\infty} \lim_{\beta\to\infty} \mb P^\beta_{\eta^{\mb x}}
\big[ H_k \le t e^{2\beta}\big] \;=\;0 \quad\text{ and}\\
&\quad \lim_{\beta\to\infty} \mb E^\beta_{\eta^{\mb x}} 
\Big[ e^{-2\beta} \int_0^{H_k \wedge t e^{2\beta}} 
\mb 1\{\eta^\beta_{s} \in \Delta_0\} \, ds \Big] \;=\;0
\quad \text{for all $k\ge 1$.}
\end{split}
\end{equation}

Since $H_1= H^+(\mc F)$ is greater than the time of the first jump,
$H_1$ is bounded below by an exponential time of parameter $8
e^{-2\beta}$, $\mb P^\beta_\eta$ almost surely for all $\eta\in \mc
E$. The first line of \eqref{15} follows from this observation and
from the strong Markov property.

To estimate the second term of of \eqref{15}, fix $k\ge 1$ and rewrite
the time integral as $\sum_{0\le j<k} \int_{H_j \wedge t
  e^{2\beta}}^{H_{j+1} \wedge t e^{2\beta}}$. For a fixed $j$, the
integral vanishes unless $H_j < t e^{2\beta}$. Hence, by the strong
Markov property, the expectation is less than or equal to
\begin{equation*}
k\, \max_{\mb y\in\Lambda_L} \mb E^\beta_{\eta^{\mb y}} 
\Big[ e^{-2\beta} \int_0^{H_1  \wedge t e^{2\beta}} 
\mb 1\{\eta^\beta_{s} \in \Delta_0\} \, ds \Big] \;.
\end{equation*}
Recall that we denoted by $\mc F(\eta^{\mb y})$ the set of
configurations which can be reached from $\eta^{\mb y}$ by a jump of
rate $e^{-2\beta}$. By the strong Markov property, this expression is
bounded by
\begin{equation*}
k\, \max_{\mb y\in\Lambda_L} \max_{\xi\in \mc F(\eta^{\mb y})} 
\mb E^\beta_{\xi} \big[ e^{-2\beta} H(\mc F)  \wedge t  \big]\;. 
\end{equation*}
By \eqref{16} this expression vanishes as $\beta\uparrow\infty$.  
\end{proof}

\section{General results}

We prove in this section an useful general result.  Fix a sequence
$(E_N: N\ge 1)$ of countable state spaces. The elements of $E_N$ are
denoted by the Greek letters $\eta$, $\xi$. For each $N\ge 1$ consider
a matrix $R_N : E_N \times E_N \to \bb R$ such that $R_N(\eta, \xi)
\ge 0$ for $\eta \not = \xi$, $-\infty < R_N (\eta, \eta)\le 0$ and
$\sum_{\xi\in E_N} R_N(\eta,\xi)=0$ for all $\eta\in E_N$.

Let $\{\eta^N_t : t\ge 0\}$ be the {\sl minimal} right-continuous
Markov process associated to the jump rates $R_N(\eta,\xi)$ \cite{n}.
It is well known that $\{\eta^N_t : t\ge 0\}$ is a strong Markov
process with respect to the filtration $\{\mc F^N_t : t\ge 0\}$ given
by $\mc F^N_t = \sigma (\eta^N_s : s\le t)$. Let $\mb P_{\eta}$,
$\eta\in E_N$, be the probability measure on $D(\bb R_+,E_N)$ induced
by the Markov process $\{\eta^N_t : t\ge 0\}$ starting from $\eta$.

Consider two sequences $\ms W = (W_N\subseteq E_N : N\ge 1)$, $\ms B =
(B_N \subseteq E_N : N\ge 1)$ of subsets of $E_N$, the second one
containing the first and being properly contained in $E_N$: $W_N
\subseteq B_N \subsetneqq E_N$.  Fix a point $\bs \xi = (\xi_N \in W_N
: N\ge 1)$ in $\ms W$ and a sequence of positive numbers $\bs \theta =
(\theta_N : N\ge 1)$.

Next result states an obvious fact. We may add to the basin $\ms B$ of
a valley $(\ms W, \ms B, \bs \xi)$ a set $\ms C$ never visited by the
process without modifying the properties of the valley.

\renewcommand{\theenumi}{\arabic{enumi}}
\renewcommand{\labelenumi}{(\theenumi)}

\begin{lemma}
\label{s18}
Assume that the triple $(\ms W, \ms B, \bs \xi)$ is a valley of depth
$\bs \theta$ and attractor $\bs \xi$. Let $\ms C = (C_N \subset E_N :
N\ge 1)$ be a sequence of sets such that $B_N^c$ is attained before
$C_N$ when starting from $W_N$:
\begin{equation}
\label{12}
\lim_{N\to\infty} \inf_{\eta\in W_N} 
\mb P_{\eta} \big[ H_{B_N^c} < H_{C_N} \big] \;=\; 1\; .
\end{equation}
Then, the triple $(\ms W, \ms B \cup \ms C, \bs \xi)$ is a
valley of depth $\bs \theta$ and attractor $\bs \xi$.
\end{lemma}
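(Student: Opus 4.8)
The plan is to verify the three defining properties of a valley for the triple $(\ms W, \ms B\cup\ms C, \bs\xi)$ directly from the corresponding properties of $(\ms W,\ms B,\bs\xi)$, using only \eqref{12} as the extra input. Throughout I write $D_N = (B_N\cup C_N)^c = B_N^c\cap C_N^c$, so that $D_N\subseteq B_N^c$, and I note the elementary pathwise inequality $H_{D_N}\ge H_{B_N^c}$, together with the fact that on the event $\{H_{B_N^c} < H_{C_N}\}$ one has $H_{D_N} = H_{B_N^c}$ exactly (the first visit to $B_N^c$, occurring before any visit to $C_N$, is automatically a visit to $D_N$).

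\textbf{Step 1 (first property: the process visits $\bs\xi$ before leaving the enlarged basin).} I must show $\lim_N \sup_{\eta\in W_N}\mb P_\eta[H_{D_N} < H_{\xi_N}] = 0$. Since $H_{D_N}\ge H_{B_N^c}$ pointwise, the event $\{H_{D_N} < H_{\xi_N}\}$ is contained in $\{H_{B_N^c} < H_{\xi_N}\}$, whose probability (uniformly in $\eta\in W_N$) tends to $0$ because $(\ms W,\ms B,\bs\xi)$ is a valley. This step is immediate and requires no use of \eqref{12}.

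\textbf{Step 2 (second property: exponential exit time with the same depth).} I must show that $H_{D_N}/\theta_N$ converges to a mean-one exponential under $\mb P_\eta$ for every $\eta\in W_N$. Write $H_{D_N} = H_{B_N^c} + (H_{D_N}-H_{B_N^c})$. On $\{H_{B_N^c}<H_{C_N}\}$ the second term is $0$; hence for any bounded continuous $f$,
\begin{equation*}
\Big| \mb E_\eta\big[f(H_{D_N}/\theta_N)\big] - \mb E_\eta\big[f(H_{B_N^c}/\theta_N)\big]\Big|
\;\le\; 2\|f\|_\infty\, \mb P_\eta\big[H_{B_N^c}\ge H_{C_N}\big]\;,
\end{equation*}
and the right-hand side tends to $0$ uniformly in $\eta\in W_N$ by \eqref{12}. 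Since $H_{B_N^c}/\theta_N$ converges to a mean-one exponential by the valley hypothesis on $(\ms W,\ms B,\bs\xi)$, so does $H_{D_N}/\theta_N$. This identifies the depth as $\bs\theta$, unchanged.

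\textbf{Step 3 (third property: negligible time spent in the enlarged basin away from the well).} I must show that for every $\eta\in W_N$ and $\delta>0$,
\begin{equation*}
\lim_{N\to\infty}\mb P_\eta\Big[\frac{1}{\theta_N}\int_0^{H_{D_N}} \mb 1\{\eta^N_s\in (B_N\cup C_N)\setminus W_N\}\,ds > \delta\Big] = 0\;.
\end{equation*}
On the event $\{H_{B_N^c}<H_{C_N}\}$ we have $H_{D_N}=H_{B_N^c}$ and the process never visits $C_N$ before time $H_{D_N}$, so on that event the integrand equals $\mb 1\{\eta^N_s\in B_N\setminus W_N\}$ and the integral coincides with the one appearing in the third valley property for $(\ms W,\ms B,\bs\xi)$. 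Splitting the probability according to whether $\{H_{B_N^c}<H_{C_N}\}$ holds, the complement contributes at most $\mb P_\eta[H_{B_N^c}\ge H_{C_N}]\to 0$ by \eqref{12}, while on the event itself the probability is bounded by the third-property probability for the original valley, which tends to $0$ by hypothesis.

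\textbf{Main obstacle.} There is essentially no deep obstacle here — the content of the lemma is that \eqref{12} lets one couple the exit behavior from $B_N\cup C_N$ with the exit behavior from $B_N$ up to an event of vanishing probability. The only point requiring a small amount of care is Step 3, where one must check that on $\{H_{B_N^c}<H_{C_N}\}$ the time integral over $(B_N\cup C_N)\setminus W_N$ genuinely reduces to the time integral over $B_N\setminus W_N$; this follows because, prior to $H_{D_N}=H_{B_N^c}$, the process stays in $B_N$ and in particular never enters $C_N\setminus B_N$. Once this observation is recorded, all three properties transfer verbatim, and the depth and attractor are manifestly unchanged.
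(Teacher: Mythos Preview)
Your proof is correct and follows essentially the same approach as the paper's: both verify the three valley conditions by observing that $(B_N\cup C_N)^c\subset B_N^c$ handles the first condition immediately, and that on the event $\{H_{B_N^c}<H_{C_N}\}$ one has $H_{(B_N\cup C_N)^c}=H_{B_N^c}$, which, combined with \eqref{12}, transfers the second and third conditions from the original valley. Your Step 3 is slightly more explicit than the paper's in noting that the integrand over $(B_N\cup C_N)\setminus W_N$ reduces to $B_N\setminus W_N$ on that event, but the underlying argument is identical.
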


\begin{proof}
We have to check the three conditions of \cite[Definition
2.1]{bl2}. The first one is obvious because $B_N^c \supset (B_N\cup
C_N)^c$. On the event $\{H_{B_N^c} < H_{C_N}\}$, $H_{B_N^c} =
H_{(B_N\cup C_N)^c}$. Hence, the convergence in distribution of
$H_{(B_N\cup C_N)^c}/\theta_N$ to a mean one exponential variable
follows from \eqref{12} and from the one of $H_{B_N^c}/\theta_N$. For
the same reasons, on the set $\{H_{B_N^c} < H_{C_N}\}$,
$\int_0^{H_{B_N^c}} \mb 1\{\eta^N_s \in A\}\, ds = \int_0^{H_{(B_N\cup
    C_N)^c}} \mb 1\{\eta^N_s \in A\}\, ds$. In particular, property
(V3) for the triple $(\ms W, \ms B \cup \ms C, \bs \xi)$ follows from
\eqref{12} and (V3) for the valley $(\ms W, \ms B, \bs \xi)$.
\end{proof}

\medskip{\bf Acknowledgments.} The authors would like to thank
B. Gois, R. Misturini and one of the referees for their careful
reading and their suggestions which permitted to clarify the text.

\end{document}